\newcommand{\R}{\mathbb{R}}
\newcommand{\C}{\mathbb{C}}
\newcommand{\N}{\mathbb{N}}
\newcommand{\Z}{\mathbb{Z}}
\newcommand{\A}{\mathcal{A}}
\newcommand{\B}{\mathcal{B}}
\newcommand{\CC}{\mathcal{C}}
\theoremstyle{plain}
\newtheorem{theorem}{Theorem}
\newtheorem{lemma}[theorem]{Lemma}
\newtheorem{prop}[theorem]{Proposition}
\theoremstyle{remark}
\newtheorem{remark}{Remark}
\numberwithin{equation}{section}
\begin{document}

\title{Large prime factors on short intervals}

\author{Jori Merikoski}

\address{Department of Mathematics and Statistics, University of Turku, FI-20014 University of Turku,
Finland}
\email{jori.e.merikoski@utu.fi}

\begin{abstract}
We show that for all large enough $x$ the interval $[x,x+x^{1/2}\log^{1.39}x]$ contains numbers with a prime factor $p > x^{18/19}.$ Our work builds on the previous works of Heath-Brown and Jia (1998) and Jia and Liu (2000) concerning the same problem for the longer intervals $[x,x+x^{1/2+\epsilon}].$ We also incorporate some ideas from Harman's book \emph{Prime-detecting sieves} (2007). The main new ingredient that we use is the iterative argument of Matom\"aki and Radziwi\l\l \, (2016) for bounding Dirichlet polynomial mean values, which is applied to obtain Type II information. This allows us to take shorter intervals than in the above-mentioned previous works. We have also had to develop ideas to avoid losing any powers of $\log x$ when applying Harman's sieve method.
\end{abstract}

\maketitle
\tableofcontents

\section{Introduction and results}
The current best result for prime numbers in short intervals is the theorem of Baker, Harman and Pintz \cite{BHP} from 2001, which states that for all large enough $x$ the interval $[x,x+x^{1/2+1/40}]$ contains prime numbers. It is well known that conditionally on the Lindel\"of Hypothesis, for instance, there are prime numbers in the intervals $[x,x+x^{1/2+\epsilon}]$ for any $\epsilon>0$ for all large enough $x$.

Since showing that the short intervals $[x,x+x^{1/2+\epsilon}]$ contain prime numbers seems to be beyond the current methods, it is sensible to consider the easier problem of finding numbers with a large prime factor. Consider intervals $[x,x+x^{1/2+\epsilon}],$ where the aim is to show that the interval contains an integer with a prime factor $p>x^{1-\gamma}$ for as small $\gamma >0$ as possible;  this problem has attracted the attention of many authors. In 1973 Jutila \cite{Jut} obtained this for $\gamma=1/3+\epsilon$ by considering numbers $pn,$  where $n\asymp x^{\gamma}$ is very smooth. This was then improved to $\gamma=0.18$ by Balog, Harman and Pintz \cite{BHP4} in 1983 (improving the earlier works \cite{Bal},\cite{Bal2},\cite{BHP3} of the same authors). In 1996, Heath-Brown \cite{HB} combined Jutila's method with sieve arguments to obtain $\gamma=1/12 +\epsilon,$ which was then improved by Heath-Brown and Jia \cite{HJ} to $\gamma=1/18+\epsilon$ in 1998. Harman in an unpuplished manuscript got $\gamma=1/20.$ The current record is $\gamma=1/26+\epsilon$ by Jia and Liu \cite{JL} from 2000.

In comparison, for slightly shorter intervals $[x,x+x^{1/2}]$ the best exponent is $\gamma=0.2572$ by Baker and Harman \cite{BH}, which is much larger. A natural question then is that at what point does this significant change in the exponent  $\gamma$ become neccesary. This is interesting especially in light of the recent result of Matom\"aki and Radziwi\l\l \, \cite{MR} (given there as a corollary of their much more general theorem on multiplicative functions) that for all $\epsilon >0$ there is a constant $C=C(\epsilon)$ such that the intervals $[x,x+C\sqrt{x}]$ contain $x^{\epsilon}$-smooth numbers  (this was previously known only for intervals $[x,x+x^{1/2}\log^{7/3+\delta}x]$, cf. \cite{Mat}). The main idea in the current manuscript is to combine their argument with the methods used for finding numbers with large prime factors in $[x,x+x^{1/2+\epsilon}],$ so that we can reduce the length of the interval as much as possible. Unfortunately, it appears that the intervals $[x,x+C\sqrt{x}]$ remain out of our reach; our main theorem is
\begin{theorem} \label{main} Let $\beta:=1.388 \dots$ denote the minimum of the function
\begin{align*}
r \mapsto \frac{\log(1-\log(r-1)) -\log(-\log(r-1)) + \log 2}{2\log r} -\frac{1}{2}
\end{align*}
for $1<r<2,$ which is obtained at $r:=1.625\dots$ Then, for all $\delta>0$ and for all large enough $x,$ the interval $[x,x+x^{1/2} \log^{\beta+\delta} x]$ contains numbers with a prime factor $p>x^{1-\gamma}$ for $\gamma=1/19.$ 
\end{theorem}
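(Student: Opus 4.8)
The plan is to run Harman's sieve inside the interval, feeding it Type~I divisor-sum estimates and Type~II bilinear estimates, with the Type~II input extracted through the iterative Dirichlet-polynomial argument of Matom\"aki and Radziwi\l\l{}. Write $y:=x^{1/2}\log^{\beta+\delta}x$, $z:=x^{1-\gamma}=x^{18/19}$, $\A:=\{n\in\Z:x<n\le x+y\}$ and, for $d\ge1$, $\A_{d}:=\{n\in\A:d\mid n\}$, so that $|\A_{d}|=y/d+r_{d}$ with $|r_{d}|\le1$. Since $z>x^{1/2}$ and $n<2x$ for every $n\in\A$, such an $n$ has at most one prime factor exceeding $z$; hence
\[
S:=\#\{\,n\in\A:\ P^{+}(n)>z\,\}=\sum_{z<p}|\A_{p}|,
\]
and it is enough to prove the lower bound $S\gg y/\log x$. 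The first step is to expand the sifting function behind $S$ — in effect, to count, simultaneously over $m<2x^{1/19}$, the primes in the short intervals $(x/m,(x+y)/m]$ — by iterating Buchstab's identity, organizing the decomposition so that $S$ equals a main term (a positive constant times $y/\log x$, once $|\A_{d}|$ is replaced by $y/d$) plus a bounded number of further sums, each of which is either a Type~I sum $\sum_{d\le D}a_{d}|\A_{d}|$ with $a_{d}$ divisor-bounded and $D\le x^{\vartheta_{0}}$, or a Type~II sum $\sum_{m\sim M}\sum_{n\sim N}a_{m}b_{n}|\A_{mn}|$ with $MN\asymp x$, $a_{m},b_{n}\ll1$, and $N$ in an admissible range $[x^{\rho_{1}},x^{\rho_{2}}]$. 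Arranging such a decomposition for $\gamma=1/19$ — with the Buchstab pieces that resist reduction to these two shapes discarded with a favourable sign and negligible total size — follows the combinatorial scheme of Heath-Brown and Jia and of Jia and Liu, but needs in addition several devices from Harman's book (role reversals, fundamental-lemma truncations); a smaller $\gamma$ would force a larger Type~I exponent $\vartheta_{0}$ or a wider Type~II range than the inputs below can supply at this length of interval.

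Both types of sum are treated by passing to Dirichlet polynomials through a smoothed Perron (Mellin) formula. For a Type~I sum one writes $\sum_{d\le D}a_{d}(|\A_{d}|-y/d)$ as a contour integral of $A(s)\zeta(s)$, with $A(s)=\sum_{d\sim D}a_{d}d^{-s}$, against the Mellin transform of a smooth approximation to $\mathbf{1}_{(x,x+y]}$; shifting the line and truncating, one reduces to mean values of $A(\tfrac12+it)\zeta(\tfrac12+it)$ — equivalently, to bounds for the relevant exponential sums — over $|t|\le T$ with $T\asymp x/y=x^{1/2}\log^{-\beta-\delta}x$, handled by the classical mean-value and large-value theory for Dirichlet polynomials (Montgomery, Huxley) as long as $D$ is not too large; it is essentially this constraint that pins down $\vartheta_{0}$, since $T$ lies only just below $x^{1/2}$. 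What distinguishes this from the case of intervals $[x,x+x^{1/2+\epsilon}]$ is that no factor $x^{\epsilon}$ may be wasted anywhere: every bound must be made log-precise, which forces the systematic use of smooth cutoffs and of the sharp forms of these theorems.

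The crux is the Type~II estimate. After the same Perron step a Type~II piece takes the shape $\int_{|t|\le T}M(\tfrac12+it)N(\tfrac12+it)\overline{K(\tfrac12+it)}\,dt$, where $M,N$ are Dirichlet polynomials supported on $m\sim M$ and $n\sim N$ and $K(s)=\sum_{x<k\le x+y}k^{-s}$ detects the interval; bounding this crudely, by Cauchy--Schwarz and the mean-value theorem, loses a power of $\log x$, which is harmless for intervals of length $x^{1/2+\epsilon}$ but fatal here. Instead, following Matom\"aki and Radziwi\l\l{}, one covers the set of $t\in[-T,T]$ on which $|M(\tfrac12+it)N(\tfrac12+it)|$ is large by a controlled family of short subintervals and, on each of them, factors $N$ as a product of shorter Dirichlet polynomials, iterating this $\asymp_{r}\log\log x$ times with a parameter $r\in(1,2)$ that controls the trade-off between the number of iterations and the lengths of the polynomials produced. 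Each step is essentially lossless in the relevant mean value, but the number of subintervals grows geometrically, so summing all the contributions leaves a net loss equal to a power of $\log x$ whose exponent, as a function of $r$, turns out to be exactly the function displayed in the statement of the theorem. Taking $r=1.625\dots$ to minimize it leaves only the loss $\log^{\beta}x$, which is precisely what the factor $\log^{\beta+\delta}x$ in the length of $\A$ is there to absorb. Throughout, every auxiliary polynomial must be kept of a length inside $[x^{\rho_{1}},x^{\rho_{2}}]$, where the iteration is effective — once more a place where the flexibility of Harman's sieve is needed.

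I expect the main obstacle to be precisely this fusion of log-precision with the Matom\"aki--Radziwi\l\l{} iteration: at $T\asymp x^{1/2}$ there is essentially no slack, so the Type~II bound has to be pushed to within a fixed power of $\log x$ of the truth while, at the same time, the whole sieve construction — the Perron truncation, each Buchstab step, every fundamental-lemma discard — is arranged so as to leak no stray power of $\log x$. Balancing the number of iterations against the constraint that the shortest polynomial produced stay inside $[x^{\rho_{1}},x^{\rho_{2}}]$, and then optimizing the resulting exponent over $r$, is what yields both the constant $\beta=1.388\dots$ and the exponent $\gamma=1/19$.
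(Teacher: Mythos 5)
Your outline has the right general silhouette (Harman-type decomposition, Perron, Type~I and Type~II polynomial mean values, Matom\"aki--Radziwi\l\l{} input), but it is missing the central structural idea of the actual argument, and the step where you claim the exponent $\beta$ appears would fail as stated. You sieve \emph{all} integers of the interval, so your Type~II pieces are generic bilinear sums $\sum_{m\sim M}\sum_{n\sim N}a_m b_n|\A_{mn}|$ with bounded coefficients and $MN\asymp x$. At interval length $y=x^{1/2}\log^{\beta+\delta}x$ the Perron integral runs up to $T\asymp x/y=x^{1/2}\log^{-\beta-\delta}x$, and after Cauchy--Schwarz one needs \emph{both} factors of the product polynomial to have length within a fixed power of $\log x$ of $x^{1/2}$; a generic dyadic splitting $m\sim M$, $n\sim N$ gives no such factorization, and the Matom\"aki--Radziwi\l\l{} machinery cannot manufacture it for arbitrary coefficients $b_n$ — it needs the summation variable to be guaranteed prime factors in prescribed ranges. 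The paper deals with this by \emph{not} detecting all integers with a large prime factor: it only counts numbers $pn$ where $n\asymp x^{\gamma}$ is confined to a sparse, Jutila-type set $n=cc'q_1q_2q_3p_1\cdots p_Jr_1\cdots r_L$ with $p_j$ lying in intervals $I_j$ designed to give a very strong well-factorability (for every admissible split point there is a partition of $p_1\cdots p_J$ producing two factors of size $\asymp\sqrt{x}/\log^{L\epsilon/2}x$; this is the partition algorithm, Lemma \ref{pa}), together with auxiliary primes $r_l\sim\log^{\epsilon}x$ to fine-tune the balance and $q_1,q_2,q_3$ to run the Matom\"aki--Radziwi\l\l{} large-values step. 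Without building this structure into the sifted sequence, your Type~II estimate has no route to a bound of the correct order, let alone with only a $\log^{\beta}$-loss.

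Relatedly, you locate the origin of $\beta$ in the wrong place: you assert that an MR-style iteration, repeated $\asymp\log\log x$ times, is ``essentially lossless'' per step but loses a power of $\log x$ from the geometric growth of subintervals, and that this exponent is exactly the displayed function of $r$. In the paper the MR method only supplies the final extra saving $\log^{-\delta}x$ (via $Q_1,Q_2,Q_3$, Lemma \ref{large} and the Hal\'asz--Montgomery inequality); the function of $r$ comes instead from the correct-order estimate (Lemma \ref{trivial}, culminating in line (\ref{length})): a factor $(\log 1/\theta)^{-J/2}$ from the square root of the density of the constructed set in the diagonal of the improved mean value theorem, a factor $2^{J/2}$ from Cauchy--Schwarz over the $2^{J}$ partitions of $\{1,\dots,J\}$, and a factor $(1+\log 1/\theta)^{J/2}$ from collision terms, with $J\approx\log\log x/\log r$ and $\theta=r-1+\epsilon$; minimizing over $r$ is exactly the trade-off between density and factorization accuracy and yields $\beta=1.388\dots$ at $r=1.625\dots$. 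Finally, the choice $\gamma=1/19$ is not forced by the Type~I exponent in the way you suggest but by the numerical size of the discarded Buchstab terms (Section \ref{buchs}), and considerable extra care (smoothed weights $W_{\A},W_{\B}$, the $w$-smooth variables $c,c'$, longer-than-dyadic ranges, the cut-off at $w$ in the fundamental proposition) is needed to keep every step log-precise. As it stands, the proposal's key quantitative claim is unsupported and the decomposition it relies on cannot be estimated at this interval length.
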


We now sketch the main ideas in the proof. The argument is based on that of Heath-Brown and Jia \cite{HJ} (also described in Chapter 5 of Harman's book \cite{Har}), so we first describe the argument they use for finding numbers with large prime factors on intervals $I:=[x,x+x^{1/2+\epsilon}];$ the aim is to show that $pn \in I$ for some prime $p$ and for some integer $n \asymp x^{\gamma}.$ Heath-Brown and Jia consider $n$ which are very smooth, of the form $n=q_1\cdots q_K,$ for primes $q_i \sim Q:= x^{\gamma/K}$ and  $K \geq 4/\epsilon$ (this idea is originally from Jutila's work \cite{Jut}). The task is then to obtain a lower bound for
\begin{align*}
\sum_{pq_1 \cdots q_K \in I, \, q_i \sim Q} 1_{p \in \mathbb{P}}.
\end{align*}
By applying Harman's sieve we can obtain a lower bound for this sum if we have an asymptotic formula for sums over $x^{\gamma}$-almost-primes of the form
\begin{align*}
\sum_{uvnq_1 \cdots q_K \in I, \, q_i \sim Q} a_u b_v 1_{(n,P(x^{\gamma}))=1},
\end{align*}
for any (say, bounded) coefficients $a_u,b_v$ supported on $ u < x^{1/4},$ $v < x^{1/2-\gamma}$ (cf. Chapter 5 of Harman's book \cite{Har}, for instance). This in turn can be reduced to obtaining asymptotic formulae for the so-called Type I/II and Type II sums (in the language of Harman's book \cite{Har})
\begin{align*}
\text{Type I/II:} \sum_{uvnq_1 \cdots q_K \in I, \, q_i \sim Q} a_u b_v, \quad \quad \text{Type II:} \sum_{uvq_1 \cdots q_K \in I, \, q_i \sim Q} a_u b_v.
\end{align*}
For the Type I/II sum the coefficients $a_u,b_v$ are supported on  $ u < x^{1/4},$ $v < x^{1/2-\gamma}$, and for the Type II sum the coefficients are supported on the Type II range $u,v \in [x^{1/2-\gamma}, x^{1/2}].$ The name for the Type II sum comes from the bilinear structure of the sum; the idea is that at some suitable stage we can use Cauchy-Schwarz to separate the variables $u$ and $v.$ In the name Type I/II, the `I' refers to the fact that we have a long smooth variable $n$, and the `II' again refers to the bilinear structure which permits the use of Cauchy-Schwarz at some point.

In the Type I/II sum we can apply Perron's formula to reduce matters to obtaining a mean value estimate for Dirichlet polynomials. Since we have a long smooth variable $n,$ we can use methods from the theory of the zeta function (e.g. reflection principle or the fourth moment estimate) to handle these sums; see the beginning of Section \ref{typei} below for a sketch of an argument of this type.

We now describe the argument used for the Type II sums in more detail (for intervals $[x,x+x^{1/2+\epsilon}]$), since obtaining the Type II estimate sets the restriction for the length of the interval in our main theorem; by applying Perron's formula, this can be reduced to obtaining a mean value estimate of the form
\begin{align*}
\int_{T_0}^{x^{1/2-\epsilon}} |A(1+it)B(1+it)Q(1+it)^{K}| \, dt \, \ll \, \log^{-C} x,
\end{align*}
where $T_0=\log^{100C} x,$ and
\begin{align*}
A(s)= \sum_{u\sim U}a_u u^{-s}, \quad B(s)=\sum_{v\sim V} b_v v^{-s}, \quad Q(s)=\sum_{q\sim Q, \, q \in \mathbb{P}} q^{-s}
\end{align*}
for $U,V \in [x^{1/2-\gamma}, x^{1/2}],$ $UV \asymp x^{1-\gamma},$ $Q\asymp x^{\gamma/K},$ $K \geq 4/\epsilon.$ By Vinogradov's zero-free region, we see that the integral is bounded by.
\begin{align*}
(\log^{-2C} x)\int_{T_0}^{x^{1/2-\epsilon}} |A(1+it)B(1+it)Q(1+it)^{K-1}| \, dt.
\end{align*}
Since $K$ is large enough, we can find $L \leq K-1$ such that $UQ^L,VQ^{K-L-1} \gg x^{1/2-\epsilon}.$
Hence, by Cauchy-Schwarz and the Mean value theorem for Dirichlet polynomials (cf. Lemma \ref{mvt1} below)
\begin{align} 
\int_{T_0}^{x^{1/2-\epsilon}} &|A(1+it)B(1+it)Q(1+it)^{K-1}| \, dt \, \leq \nonumber \\
&\left(\int_{T_0}^{x^{1/2-\epsilon}} |A(1+it)Q(1+it)^L|^2 \, dt \right)^{1/2} \left(\int_{T_0}^{x^{1/2-\epsilon}} |B(1+it)Q(1+it)^{K-L-1}|^2 \, dt \right)^{1/2} \nonumber \\ \label{typebound}
&  \ll   \left(\frac{x^{1/2-\epsilon}}{UQ^L} +1 \right)^{1/2} \left(\frac{x^{1/2-\epsilon}}{VQ^{K-L-1} } +1 \right)^{1/2} \log^C x \ll \log^C x,
 \end{align}
which is sufficient.

Consider then the shorter intervals $[x,x+x^{1/2}\log^{\epsilon} x].$ Here the upper integration limit in the above mean value becomes $x^{1/2}\log^{-\epsilon} x$ instead of $x^{1/2-\epsilon}$. Hence, to make the above argument work, we now need to factor the product $A(1+it)B(1+it)Q(1+it)^{K-1}$ into almost equally long parts, with a much greater accuracy of $\log^{\epsilon} x$; this is because we now need both of the polynomials in the factorization to have length greater than $x^{1/2}\log^{-\epsilon} x.$  This means that $K$ must be very large, which in turn means that our set $\{pq_1 \dots q_K\}$ becomes very sparse. This causes losses in the Mean value theorem for Dirichlet polynomials, which now need to be gained back. For short intervals $[x,x+x^{1/2}\log^{\epsilon} x]$ we must apply the method of Matom\"aki and Radziwi\l\l, which can only give a small saving of $\log^{-\epsilon/5} x$ over the mean value estimate, which is insufficient to gain back the losses. To put it simply, we have a situation of two competing requirements, a high density versus a strong factorization property, which forces a compromise.  By taking interval of the form $y=x^{1/2}\log^{B} x,$ we can work with a set of density of some power of $\log x,$ which gives us just enough room to obtain the factorization property.

To maximize the density, we must take our small prime factors on intervals longer than dyadic intervals (cf. intervals $I_j$ below). This makes the computations in the Type II estimate much trickier. We must also exercise great care in every step so that we do not lose any additional powers of $\log x;$ for this purpose we have had to develop some new ideas, especially for the Type I/II estimtate (cf. Section \ref{typei} below) and in the framework of Harman's sieve (cf. Sections \ref{funprop} and \ref{buchs} below). Optimizing the set-up we find that the argument works for $y=(\log^{\beta+\delta}x)\sqrt{x}$ with $\beta$ as in Theorem \ref{main}. The value of $\gamma$ in the theorem is not necessarily the best that one can obtain, but we do not pursue this issue further here since our main focus is on the length of the interval.

The paper is structured as follows: in Section \ref{setup} we give the set-up, and in Section \ref{preli} we have collected some basic lemmata which will be used in the proofs.  Section \ref{typei} contains the proof of our Type I/II estimate (Proposition \ref{typeiestimate}). In Section \ref{typeii} we prove our Type II estimate (Proposition \ref{typeiiestimate}), which is the heart of the matter; Lemma \ref{trivial} there gives the restriction for the length of the interval (cf. line (\ref{length}) in particular, which yields the function of $r$ in Theorem \ref{main} which is minimized to optimize the result). In Section \ref{funprop} we prove the so-called Fundamental proposition (Proposition \ref{ft}), which combines the Type I/II and Type II estimates to give an asymptotic formula for certain sums over almost-primes (this corresponds to Lemma 5.3 in Chapter 5 of Harman's book \cite{Har}); we note that the proofs in Sections \ref{typei}, \ref{typeii}, and \ref{funprop} work for any $0<\gamma<1/2.$ In Section \ref{buchs} we use Buchstab's identity along with the Fundamental proposition and the Type II estimate to give a lower bound for a sum over numbers with large prime factors; there we encounter sums for which we cannot obtain an asymptotic formula. The contribution from these sums is bounded by numerical computations, which ultimately determines the exponent $\gamma$ in Theorem \ref{main}. These numerical computations are performed using Python 3.7; for the code see the codepad links at the end of the paper.

While it is not a prerequisite, the reader will find the proofs easier to digest if he is familiar with the contents of Chapters 3 and 5 of Harman's book \cite{Har}. However, we have tried to sketch the relevant ideas before giving the full proofs in each situation. It may also be helpful at the first pass to read only the statements of the Type I/II and Type II estimates (Propositions \ref{typeiestimate} and \ref{typeiiestimate}), and after reading Section \ref{funprop} return to the proofs of these estimates; this should help to motivate the exact form of these propositions.

\subsection*{Notations}
We use the following asymptotic notations: For positive functions $f,g,$ we write $f \ll g$ or $f= \mathcal{O}(g)$ if there is a constant $C$ such that $f  \leq C g.$ $f \asymp g$ means $g \ll f \ll g.$ The constant may depend on some parameter, which is indicated in the subscript (e.g. $\ll_{\epsilon}$).
We write $f=o(g)$ if $f/g \to 0$ for large values of the variable.

It is useful for us to introduce the following unconventional notation: $f \lesssim g$ means that there is some positive function $\psi = \log^{o(1)} x$ so that $f \leq \psi g$ (e.g. a common estimate we use is $(\log \log x)^{\mathcal{O}(1)} \lesssim 1$). A recurring theme is that we are interested in estimates only up to factors $\lesssim 1.$

In general, $C$ stands for some large constant, which may not be the same from place to place. For variables we write $n \sim N$ meaning $N<n \leq eN$ (an $e$-adic interval), and $n \asymp N$ meaning $N/C < n < CN$ (a $C^2$-adic interval) for some constant $C>1$ which is large enough depending on the situation; for example, we write
\begin{align*}
\sum_{m \asymp M} \sum_{n \asymp N} |a_m||b_n| \, \ll \, \sum_{mn \asymp MN} |a_m||b_n|
\end{align*}
meaning that the implied constants are $C,$ $C'$ and $CC'.$  If not otherwise stated the symbols $p,q,r$  denote primes and $c,d,k,l,m,n$ denote integers.

For a statement $E$ we denote by $1_E$ the characteristic function of that statement. For complex numbers we use the notation $s=\sigma+it,$ $\sigma,t\in \R.$

We define $P(w):= \prod_{p\leq w} p,$ and for any integer $d$ we write $P^-(d):= \min \{p: \, p | d\},$ $P^+(d):= \max \{p: \, p | d\}.$ The divisor function is denoted by $\tau(d).$ We denote by $\tau^{(k)}$  the $k$-fold divisor function (i.e. $\tau^{(k+1)}=1 \ast \tau^{(k)},$ $\tau^{(1)} \equiv 1$, where $\ast$ is the Dirichlet convolution). We set $\tau_{w}(d) := (1_{(\cdot, P(w))=1} \ast 1)(d) ,$ which equals one plus the number of divisors whose prime factors are $> w.$

\subsection*{Acknowledgements} The author is grateful for his supervisor Kaisa Matom\"aki for suggesting the topic as well as for many helpful discussions. The author also wishes to thank Joni Ter\"av\"ainen for supplying a Python code for computing Buchstab integrals, on which the code we use is built upon. The author is also grateful for both of the above for reading a preliminary version of this manuscript; their comments have greatly improved the article. During the work the author was supported by grants from the Finnish Cultural Foundation's Central Fund and the Magnus Ehrnrooth Foundation.

\section{Set-up and conventions} \label{setup}

Let $K,L>0$ denote large and $\epsilon >0$ small constants, with $K,L \, \gg \, 1/\epsilon.$ We will abuse the notation so that we write $K^{-1},L^{-1},\epsilon = o(1)$ meaning that we will eventually choose each constant to be large or small enough. In addition, we let $\delta > 0$ denote a fixed small constant.  We choose $\epsilon$ small and $K,L$ large enough so that  $E(\epsilon,K,L) = o(\delta)$ for certain error terms $E(\epsilon,K,L)$ that occur below.

We now give our basic set-up; to collect all of the definitions in one place, we postpone the motivation for this construction to the paragraphs following Proposition \ref{mainp} below.  Let $r=1.625\dots$ and $\beta=1.388\dots$ be as in Theorem \ref{main}.  Set
\begin{align*}
& \theta := r-1 + \epsilon, \quad \quad \omega := x^{\gamma(r-1)}, \quad \quad w := x^{1/(\log \log x)^2},\\ & J :=  \left \lceil \frac{1}{\log r} \log \left( \frac{\log \omega}{\log K}\right) \right \rceil, \quad \quad  H:= \left \lceil \frac{(\log \log x)^{1/2} }{ 10\delta} \right \rceil,
\\ & Q_1 := \log^{10\delta} x,  \quad \quad Q_2 := Q_1^H, \quad \quad Q_3 := \exp (2 \lfloor \log^{9/10} x \rfloor ), 
\end{align*}
and define intervals 
\begin{align*}
 I &:= [1,x^{\epsilon}], \quad \, I_j := \begin{cases} (\omega^{(1-2\epsilon) r^{-j}}, \omega^{(1-\epsilon)r^{-j}}], \, &j=1,2, \dots, K \\
  (\omega^{\theta r^{-j}}, \omega^{(1-\epsilon)r^{-j}}], \, & j= K+1, \dots, J. \end{cases}
\end{align*}
Note that since $r > (1+\sqrt{5})/2 =1.618...,$ we have $\theta > 1/r$ so that the intervals $I_j$ are disjoint. Note also that $ \omega^{r^{-J}} \in [K^{1/r},K]$ by the choice of $J$.   Similarly as in Section 10 of \cite{MR}, we define the piecewise linear smoothing of the indicator function of $[1-\eta,1+\eta]$ by
\begin{align} \label{ffun}
f_{\eta,\xi}(z) : = \begin{cases} 1, & 1-\eta \leq z \leq 1+\eta, \\
(1-z+\eta + \xi )/\xi, & 1+\eta < z \leq 1+\eta + \xi, \\
(z-1+\eta + \xi )/\xi, & 1-\eta -\xi \leq z \leq 1-\eta, \\
0 & \text{otherwise.}
\end{cases}
\end{align}
Define (this definition is made so that the bound (\ref{length}) in Section \ref{typeii} is satisfied)
\begin{align} \label{y}
y:&=x^{1/2}( \log^{-1/2 + 10\delta} x) 2^{J/2} (1+\log 1/\theta)^{J/2} (\log 1/\theta)^{-J/2} \\ \nonumber
& = x^{1/2}\exp\left(\left( \frac{\log(1+\log(1/\theta)) -\log(\log(1/\theta)) + \log 2}{2\log r} -\frac{1}{2} + 10\delta + o(1)\right)\log\log x \right)  \\ \nonumber
&=  x^{1/2} \log^{\beta+10\delta+o(1)} x ,
\end{align}
and
\begin{align*}
\eta_\A &:=  y/x, \quad \quad \quad \quad\xi_\A := (\log^{-\epsilon}x) y/x, \quad\quad \quad \quad  f_{\mathcal{A}}(z) := f_{\eta_A, \xi_\A}(z),\\
\eta_B &:= \log^{-100} x, \quad\quad\quad  \xi_\B := \log^{-100-\epsilon} x, \quad \quad \quad \quad f_{\mathcal{B}}(z):= f_{\eta_\B, \xi_\B}(z),\\
W_{\mathcal{A}}(n) &:= \sum_{\substack{ncc'q_1q_2q_3 p_1 \cdots p_J r_1 \cdots r_L \asymp x }} f_{\mathcal{A}}(ncc' q_1q_2q_3p_1 \cdots p_J r_1 \cdots r_L/x), \\
W_{\mathcal{B}}(n)&:= \sum_{\substack{ncc'q_1q_2q_3 p_1 \cdots p_J r_1 \cdots r_L \asymp x }} f_{\mathcal{B}}(ncc' q_1q_2q_3p_1 \cdots p_J r_1 \cdots r_L/x), \\
 S(\mathcal{A},z) &:= \sum_{\substack{(n,P(z))=1}} W_{\mathcal{A}}(n), \quad \quad \quad 
S(\mathcal{B},z) := \sum_{\substack{(n,P(z))=1}} W_{\mathcal{B}}(n),
\end{align*}
where in the summations $c,c'$ are $w$-smooth integers (that is, $P^+ (c),P^+ (c') \leq w$),  $q_i,p_j, r_l$ are primes, $ q_1 \sim Q_1,$ $ q_2 \sim Q_2,$  $Q_3^{1/2} < q_3 \leq Q_3,$ and
\begin{align} \label{summation}
c,c' &\in I, \quad \quad 
 p_{j} \in I_j \,\, \text{for} \,\, j=1,2,\dots, J, \quad \quad r_l \sim \log^\epsilon x \,\,\text{for} \,\, l=1,2, \dots, L. 
\end{align}
From here on until Section \ref{funprop}, the above conditions will always apply to the corresponding variables and will usually be suppressed in the notation. Same applies to $q'_i, p'_j, r'_l.$ Note that by definitions $p_J \in I_J\subset [K^{(r-1)/r},K].$ We also note that since the intervals $I_j$ are short for $j \leq K$, we have 
\begin{align} \label{hash}
cc'q_1q_2q_3 p_1 \cdots p_J r_1 \cdots r_L = x^{\gamma(r-1)(r^{-1} + r^{-2} + \cdots)+o(1)} = x^{\gamma +o(1)}.
\end{align}
We remark that by the Prime Number Theorem
\begin{align}
 \frac{\log^{100}x }{x} \sum_{p  } W_{\mathcal{B}}(p) & =\frac{2+o(1)}{\log (x^{1-\gamma+o(1)})} \sum_{\substack{c,c',q_1,q_2,q_3,\\ p_1, \cdots ,p_J, \\ r_1, \cdots ,r_L}}  (cc'q_1q_2q_3 p_1 \cdots p_J r_1 \cdots r_L)^{-1} \nonumber \\ \label{omega}
 &=\frac{(2 +o(1))\Omega}{(1-\gamma )\log x} ,
\end{align}
where $\Omega$ is 
\begin{align*}
\left( \sum_{c \leq D, \, P^+(c) \leq w} \hspace{-15pt} c^{-1}\right)^2 & \left( \sum_{q_1 \sim Q_1} q_1^{-1}\right)\left(  \sum_{q_2 \sim Q_2} q_2^{-1}\right) \left( \sum_{ Q_3^{1/2} \leq q_3 \leq Q_3} \hspace{-15pt} q_3^{-1}\right)     \left(\sum_{r \sim \log^\epsilon x} \hspace{-5pt}r^{-1} \right)^L \prod_{j=1}^J \left( \sum_{p \in I_j} p^{-1}\right)  \\
&= (\log^{2+o(1)} x)  \prod_{j=K+1}^J  \left( (1+\mathcal{O}(\log^{-1} K) ) \log((1-\epsilon)/\theta)  \right) \\
  &=(\log^{2+o(1)} x)  (\log 1/\theta)^{J},
\end{align*}
since for $j\leq K$ we have $\sum_{p \in I_j} p^{-1} \asymp 1,$ and for $j >K$ by the Prime Number Theorem (recall that $ \omega^{r^{-J}} \in [K^{1/r},K]$)
\begin{align*}
\sum_{p \in I_j} p^{-1} &= (1+\mathcal{O}(\log^{-1} (\omega^{\theta r^{-j}})))(\log\log(\omega^{(1-\epsilon)r^{-j}}) - \log\log(\omega^{\theta r^{-j}})) \\
&= (1+\mathcal{O}(\log^{-1} K))\log((1-\epsilon)/\theta).
\end{align*}
Since $z \mapsto f_\A(z/x)$ is supported on $[x-2y,x+2y],$ and  $S(\mathcal{C},x^{(1-\gamma )/2+\delta})=\sum_{p  } W_{\mathcal{C}}(p)$ for $\CC=\A$ and $\CC=\B$, Theorem \ref{main} follows once we prove
\begin{prop} \label{mainp} For every $\gamma \geq 1/19$ there exists $\mathfrak{C}(\gamma) > 0,$ such that for all large enough $x$
 \begin{align*}
 S(\mathcal{A},x^{(1-\gamma)/2 +\delta}) > \mathfrak{C}(\gamma) \cdot \frac{ y \log^{100}x }{x} S(\mathcal{B},x^{(1-\gamma )/2+\delta}),
 \end{align*}
We can take $\mathfrak{C}(1/19) = 0.007.$
\end{prop}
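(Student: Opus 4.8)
\textbf{Proof proposal for Proposition \ref{mainp}.}

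The plan is to run a Harman-type sieve decomposition on $S(\mathcal{A},z)$ with $z=x^{(1-\gamma)/2+\delta}$, using Buchstab's identity to peel off prime factors of the sifted variable $n$, and to replace each resulting sub-sum by its expected main term (a multiple of the corresponding sum for $\mathcal{B}$) whenever the Fundamental Proposition (Proposition \ref{ft}) or the Type II estimate (Proposition \ref{typeiiestimate}) applies. The starting point is that $W_{\mathcal{A}}(n)$ is the harmonic-weighted count of $ncc'q_1q_2q_3p_1\cdots p_J r_1\cdots r_L$ in an interval of length $\asymp y$ around $x$, smoothed by $f_{\mathcal{A}}$, and similarly for $\mathcal{B}$; the long smooth variable $c$ (two $w$-smooth factors) together with the many small prime factors $p_j\in I_j$ and $r_l\sim\log^\epsilon x$ provides exactly the Type I/II and Type II input that Propositions \ref{typeiestimate} and \ref{typeiiestimate} are built to exploit. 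By construction (see (\ref{hash})) the product of all the auxiliary variables is $x^{\gamma+o(1)}$, so $n$ ranges over $x^{1-\gamma+o(1)}$ and a prime factor $p>x^{1-\gamma}$ of the whole product corresponds to $n$ being prime, i.e.\ $S(\mathcal{A},x^{(1-\gamma)/2+\delta})$ counts (weighted) configurations with the desired large prime factor.

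First I would apply Buchstab's identity iteratively to $S(\mathcal{A},z)$, writing
\begin{align*}
S(\mathcal{A},x^{(1-\gamma)/2+\delta}) = S(\mathcal{A},x^{\gamma}) - \sum_{x^{\gamma}\le p < x^{(1-\gamma)/2+\delta}} S(\mathcal{A}_p, p) + \cdots,
\end{align*}
and continue decomposing the subtracted sums, at each stage checking whether the variable ranges land inside the Type I/II range ($u<x^{1/4}$, $v<x^{1/2-\gamma}$) or the Type II range ($u,v\in[x^{1/2-\gamma},x^{1/2}]$). Whenever they do, Proposition \ref{ft} converts the sum into $\mathfrak{c}\cdot(y\log^{100}x/x)$ times the same sum over $\mathcal{B}$, up to an error $\lesssim 1$ relative to the main term — here the care taken in Sections \ref{typei}, \ref{typeii}, \ref{funprop} to avoid losing powers of $\log x$ is essential, since the main term itself is only of size a fixed power of $\log x$ (cf.\ (\ref{omega})). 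The sums that cannot be given an asymptotic formula — those where after all decompositions the remaining sifted variable sits in an awkward range — are retained with a sign; the negative ones are bounded from above in absolute value by discarding the sieve condition and using the Type II estimate plus the mean-value machinery, and this upper bound is ultimately a Buchstab integral (a multidimensional integral of the Buchstab function $\omega$ over the relevant simplex of exponents), evaluated numerically.

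Collecting terms, I would arrive at an inequality of the shape
\begin{align*}
S(\mathcal{A},x^{(1-\gamma)/2+\delta}) \ge \left(\mathfrak{C}_0(\gamma) - \mathfrak{C}_1(\gamma)\right)\cdot\frac{y\log^{100}x}{x}\,S(\mathcal{B},x^{(1-\gamma)/2+\delta}) + (\text{error} \lesssim \text{main term}),
\end{align*}
where $\mathfrak{C}_0(\gamma)$ is the total main-term contribution from the asymptotically-evaluable pieces (an explicit sum of Buchstab integrals) and $\mathfrak{C}_1(\gamma)$ is the numerically computed upper bound for the discarded negative pieces. One then verifies $\mathfrak{C}_0(\gamma)-\mathfrak{C}_1(\gamma)>0$ for $\gamma\ge1/19$, and in particular that at $\gamma=1/19$ it exceeds $0.007$; this is where the Python computation enters. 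The main obstacle, as the paper itself flags, is twofold: first, keeping every application of Perron's formula, Cauchy--Schwarz, and the mean-value theorem for Dirichlet polynomials accurate to within a factor $\lesssim 1$ rather than a crude power of $\log x$, because the density of the set $\{pq_1q_2q_3p_1\cdots p_Jr_1\cdots r_L\}$ is itself only a power of $\log x$ (this forces the unconventional $\lesssim$ bookkeeping and the use of non-dyadic intervals $I_j$); and second, arranging the Buchstab decomposition so that the pieces one is forced to discard are small enough that the numerical bound $\mathfrak{C}_1(1/19)$ stays below $\mathfrak{C}_0(1/19)-0.007$ — balancing the factorization requirement (which pushes $J$, hence the sparsity, up) against the density requirement is exactly the compromise that fixes both $\gamma=1/19$ and the exponent $\beta$ in the interval length.
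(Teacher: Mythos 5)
Your overall skeleton is the same as the paper's: decompose $S(\mathcal{A},x^{1/2-\gamma/2+\delta})$ by Buchstab's identity, feed every piece you can into Proposition \ref{ft} or Proposition \ref{typeiiestimate}, discard the rest, and measure the loss by numerically evaluated Buchstab integrals. But the mechanism you describe for the unevaluable pieces contains a genuine gap. In a Harman-sieve lower bound the only sums you may discard are those carrying a \emph{positive} sign in the decomposition: for these you use the trivial bound $S_k(\mathcal{A})\geq 0$, and the price is the corresponding $\mathcal{B}$-sum $\lambda S_k(\mathcal{B})$, which is turned into a Buchstab integral by the Prime Number Theorem (Lemmas \ref{bi} and \ref{roleb}) — not by the Type II estimate or the mean-value machinery. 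Your proposal instead keeps the unevaluable sums ``with a sign'' and handles the negative-signed ones by ``discarding the sieve condition and using the Type II estimate.'' That step fails: dropping the sieve condition $(n,P(z))=1$ inflates such a sum by a factor $\asymp\log x$, which is fatal here because the entire main term is only a fixed power of $\log x$ (cf.\ (\ref{omega})); and if the Type II estimate applied to the piece it would not be unevaluable in the first place. In the paper every negative-signed piece \emph{is} given an asymptotic formula up to a relative error $O(\delta)$ — either directly, or after relaxing cross-conditions by splitting variables into ranges $(V^{1-\epsilon},V]$ (as for $S_{5,2}$), or after a role reversal (as for $S_{7,1,2}$) which flips the sign of the genuinely intractable part so that it, too, can be discarded.

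Relatedly, your accounting $\mathfrak{C}_0(\gamma)-\mathfrak{C}_1(\gamma)$ hides the actual content of the proof: since the evaluable pieces recombine exactly to $\lambda S(\mathcal{B},X)$ minus the discarded $\mathcal{B}$-sums, what must be verified is that the total deficiency of the discarded (positive-signed) pieces is $<1$, and making this happen at $\gamma=1/19$ requires the specific devices the paper deploys — the split of $S_4$ into $S_5,\dots,S_8$, the upward Buchstab step on $\mathcal{U}(5,3,3,1)$, the role-reversal treatment of $S_{7,1,2}$ with the optimized cut $\sqrt{p_1}p_2=x^{0.36}$ — yielding deficiencies $0.1802756+0.6099015+0.2021922<1$ and hence $\mathfrak{C}(1/19)=0.007$. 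Without exhibiting such a decomposition (and in particular without ensuring that nothing unevaluable ever appears with a negative sign), the numerical claim is not established.
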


The above definitions are tailored with the Type II estimate in mind. To obtain our Type II estimate, we roughly speaking must require that for any $x^{1/2-\gamma+\delta}<u,v<x^{1/2-\delta}$ such that $uvcc'q_1q_2q_3 p_1 \cdots p_J r_1 \cdots r_L \sim x,$ we can form a partition of the product $q_1q_2q_3 p_1 \cdots p_J = (\pi)\cdot(\tau)$ in such a way that $uc\pi ,vc'\tau  \asymp \sqrt{x}/\log^{L\epsilon/2} x.$  The $w$-smooth parameters $c,c'$ are added to boost the density; the restriction to $w$-smooth numbers will be useful in the Type I/II estimate (cf. Lemma \ref{implies}, this is a kind of `arithmetic smoothing' of $1_{[1,x^{\epsilon}]}$). We will choose $L$ large to make $L\epsilon$  sufficiently large and then use the primes $r_l$ to balance the partition suitably (with an accuracy $\log^\epsilon x$). The primes $q_1,q_2,q_3$ will be used in the Type II estimate to bound a Dirichlet polynomial mean value by using the method of Matom\"aki and Radziwi\l\l \, \cite{MR}. It is technically easier to include them separately, even though the primes $p_j$ could in principle be used to the same effect. Note that the ranges $Q_1,Q_2$ are small enough so that
\begin{align} \label{smallprimes}
 \sum_{q \sim Q_1} q^{-1}, \, \sum_{q \sim Q_2} q^{-1}, \, \left( \sum_{q \sim Q_1} q^{-1} \right)^H \hspace{-5pt}, \, \sum_{r \sim \log^{\epsilon} x} r^{-1} \, = \, \log^{o(1)} x,
\end{align}
that is, using $e$-adic intervals does not cause significant losses. The range for $q_3$ is large (in particular, $\log Q_3 \neq \log^{o(1)}x$), which forces us to take a longer interval $q_3 \in (Q_3^{1/2},Q_3].$

\begin{remark} Those familiar with the well-factorability of weights in the linear sieve will note a similarity with our construction (cf. Chapter 12 of Friedlander and Iwaniec's book \cite{FI}). An integer $d$ is said to be well-factorable of level $D$ if for any $D_1D_2 = D,$ $D_1 \geq 1, D_2 \geq 1,$ there are $d_1 \leq D_1,$ $d_2 \leq D_2$ such that $d=d_1d_2.$ If $d=p_1\cdots p_J,$ $p_1 > p_2 > \dots > p_J,$ then a sufficient condition is that for all $j\leq J$
\begin{align} \label{wellf}
p_1\cdots p_{j-1} p_j^2 \leq D
\end{align}
(see the proof of Lemma 12.16 in \cite{FI}). This becomes stricter as $D$ decreases, and if $D \leq C p_1 \cdots p_J,$ then $d$ has to have a factor on every $C$-adic interval $[z,Cz] \subset [1,d]$. We require a very strong level of well-factorability, that is, of level $D \ll p_1 \cdots p_J.$ Thus, the criteria (\ref{wellf}) becomes
\begin{align*}
p_{j+1} \cdots p_J \gg p_j,
\end{align*}
which motivates the definition of the intervals $I_j$ with $\theta=r-1+\epsilon$ in our situation.
\end{remark}

\section{Preliminaries} \label{preli} 
We have gathered here some basic results for reference. The first two lemmata are mean value estimates for Dirichlet polynomials. The proof of the first can be found in Chapter 9 of the book \cite{IK} by Iwaniec and Kowalski, for instance.

\begin{lemma} \label{mvt1} Let $N \geq 1$ and $F(s) = \sum_{n \sim N} a_n n^{-s}$ for some $a_n\in \C.$  Then
\begin{align*}
\int_0^T | F(it) |^2 \, dt \ll (T+N) \sum_{n \sim N} |a_n|^2.
\end{align*}
\end{lemma}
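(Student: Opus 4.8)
The plan is to expand the square and integrate term by term. Writing $|F(it)|^2 = \sum_{m \sim N}\sum_{n \sim N} a_m \overline{a_n} (m/n)^{-it}$, we have
\begin{align*}
\int_0^T |F(it)|^2 \, dt = \sum_{m \sim N}\sum_{n \sim N} a_m \overline{a_n} \int_0^T \left(\frac{m}{n}\right)^{-it} dt.
\end{align*}
The diagonal terms $m = n$ contribute exactly $T \sum_{n \sim N} |a_n|^2$. For the off-diagonal terms, the inner integral equals $\bigl(e^{-iT\log(m/n)} - 1\bigr)/\bigl(-i\log(m/n)\bigr)$, which is bounded in absolute value by $2/|\log(m/n)|$. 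So the off-diagonal contribution is at most $\sum_{m \ne n} |a_m||a_n| \cdot 2/|\log(m/n)|$, and by the AM-GM inequality $|a_m||a_n| \le \tfrac12(|a_m|^2 + |a_n|^2)$ this is at most $2 \sum_{m \sim N} |a_m|^2 \sum_{n \sim N,\, n \ne m} |\log(m/n)|^{-1}$.

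The remaining task is to show that for each fixed $m \sim N$ one has $\sum_{n \sim N,\, n \ne m} |\log(m/n)|^{-1} \ll N$. Since $m, n \in (N, eN]$, for $n \ne m$ we have $|\log(m/n)| \gg |m-n|/N$: indeed $|\log(m/n)| = |\log(1 + (m-n)/n)| \asymp |m-n|/n \asymp |m-n|/N$ because $|m-n|/n$ is bounded. Hence the sum is $\ll N \sum_{0 < |m - n| \le (e-1)N} |m-n|^{-1} \ll N \log N$. This would only give $(T + N\log N)\sum|a_n|^2$, so to remove the spurious $\log N$ one argues more carefully near $n = m$: group the terms by $k = |m - n|$, and note that for $k \ge 1$ each value of $k$ corresponds to at most two choices of $n$, but the bound $|\log(m/n)|^{-1} \ll N/k$ can be summed against a smoothing — the standard fix (as in Chapter 9 of \cite{IK}) is to introduce a nonnegative majorant $\varphi$ with $\varphi \ge 1_{[0,T]}$ whose Fourier transform is supported in $[-1/N, 1/N]$ (e.g. a Fejér-type kernel of width $\asymp 1/N$), so that $\int_0^T \le \int \varphi(t)|F(it)|^2\,dt = \sum_{m,n} a_m\overline{a_n}\,\widehat{\varphi}(\log(m/n))$, and $\widehat{\varphi}$ then kills all off-diagonal terms with $|\log(m/n)| > 1/N$, i.e. essentially all $n \ne m$, leaving $\ll (T + N)\sum_{n\sim N}|a_n|^2$.

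The main obstacle is precisely this last point: the naive diagonal-plus-off-diagonal split loses a factor $\log N$, and one needs either the Fourier-majorant device above (Gallagher's lemma / a smoothed Plancherel argument) or an equivalent large-sieve inequality to obtain the clean bound $T + N$ with no logarithm. Everything else is a routine expansion and elementary estimate on $|\log(m/n)|$.
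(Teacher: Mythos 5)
Your argument is correct in substance, though the paper itself gives no proof of this lemma: it simply cites Chapter 9 of Iwaniec and Kowalski, where the result is obtained from Hilbert's inequality (the Montgomery--Vaughan mean value theorem) in the sharper form $\int_0^T|F(it)|^2\,dt=\sum_{n\sim N}|a_n|^2\,(T+O(n))$. Your route --- expand the square, keep the diagonal, note that the naive off-diagonal estimate loses a factor $\log N$, and then pass to a nonnegative majorant of $1_{[0,T]}$ with compactly supported Fourier transform --- is a different but perfectly adequate way to reach the weaker bound $\ll (T+N)\sum|a_n|^2$ stated here, and it is to your credit that you identified exactly where the naive computation fails. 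Two details should be tidied up. First, a Fej\'er kernel by itself is not a majorant of $1_{[0,T]}$; you need, e.g., the Beurling--Selberg majorant, or the cheaper substitute $\varphi=2\cdot 1_{[-N,T+N]}\ast K$ with $K$ a Fej\'er kernel of width $\asymp 1/N$, which is $\geq 1$ on $[0,T]$, nonnegative, has $\widehat{\varphi}$ supported in an interval of length $\asymp 1/N$, and has total mass $\widehat{\varphi}(0)\ll T+N$. Second, with $\widehat{\varphi}$ supported in $[-1/N,1/N]$ the off-diagonal terms with $|m-n|\ll 1$ are not killed, since for adjacent integers $|\log(m/n)|$ can be as small as about $1/(eN)$; either shrink the support to $[-1/(10N),1/(10N)]$, or observe that these $O(N)$ near-diagonal pairs contribute at most $\widehat{\varphi}(0)\sum_{0<|m-n|\leq 2}|a_m||a_n|\ll (T+N)\sum_{n\sim N}|a_n|^2$ by the same AM--GM step you already used. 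With either fix your proof is complete.
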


The following mean value theorem improves the above bound for sparse sequences, as is noted in the work of Ter\"av\"ainen \cite{Ter}. Similarly as in there, we note that the lemma follows from Lemma 7.1 of Chapter 7 in \cite{IK} by taking $Y=10T$ there.

\begin{lemma}\emph{\textbf{(Improved mean value theorem).}}\label{mvt2}  Let $N \geq 1$ and $F(s) = \sum_{n \sim N} a_n n^{-s}$ for some $a_n\in \C.$  Then
\begin{align*}
\int_0^T | F(it) |^2 \, dt \ll T \sum_{n \sim N} |a_n|^2 + T \sum_{1 \leq h \leq N/T} \sum_{\substack{ m-n =h  \\ m,n \sim N}} |a_m||a_n|.
\end{align*}
\end{lemma}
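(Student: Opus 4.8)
The plan is to deduce this from the large-sieve-type inequality stated as Lemma 7.1 of Chapter 7 in \cite{IK} (the one giving a bound on $\int_{-Y}^{Y} |F(it)|^2\,dt$ in terms of $\sum_n |a_n|^2$ plus an off-diagonal term involving $\sum_h |a_m||a_n|$ over $m-n=h$ with $h \le $ something like $N/Y$), specialized by taking the free parameter $Y = 10T$. The key point is that with $Y$ chosen comparable to $T$ one gets a clean main term $T\sum |a_n|^2$, and the off-diagonal range of $h$ becomes $1 \le h \le N/Y \asymp N/T$, which is exactly the range in the statement.

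Concretely, I would first recall (or quote) the precise form of Lemma 7.1 of \cite{IK}: for $F(s)=\sum_{n\sim N} a_n n^{-s}$ and any $Y\ge 1$,
\begin{align*}
\int_{-Y}^{Y} |F(it)|^2\,dt \ll Y\sum_{n\sim N}|a_n|^2 + Y\sum_{1\le h\le N/Y}\sum_{\substack{m-n=h\\ m,n\sim N}}|a_m||a_n|,
\end{align*}
up to standard smoothing/absolute-value bookkeeping (one uses a nonnegative majorant for $1_{[-Y,Y]}$ whose Fourier transform is supported in an interval of length $\asymp 1/Y$, producing the constraint $\log(m/n)\ll 1/Y$, i.e. $|m-n|\ll N/Y$ since $m,n\sim N$). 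Then I would observe that $\int_0^T |F(it)|^2\,dt \le \int_{-10T}^{10T}|F(it)|^2\,dt$ trivially (the integrand is nonnegative and $[0,T]\subset[-10T,10T]$), and apply the displayed inequality with $Y=10T$. This gives $\int_0^T|F(it)|^2\,dt \ll T\sum_{n\sim N}|a_n|^2 + T\sum_{1\le h\le N/(10T)}\sum_{m-n=h,\,m,n\sim N}|a_m||a_n|$, and enlarging the $h$-range from $N/(10T)$ to $N/T$ only increases the right side, yielding the claim.

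I do not expect a serious obstacle here, since this is essentially a citation with a parameter choice, exactly as in \cite{Ter}. The only mildly delicate point is making sure the off-diagonal term in the cited Lemma 7.1 really is of the stated shape (a sum over small gaps $h$ of $|a_m||a_n|$, rather than something with the $a_n$ squared or with a different $h$-range); this is handled by quoting the reference verbatim and noting that for $m,n\sim N$ the Fourier-support condition translates into $|m-n|\le N/Y$. A second routine point is that when $N/T < 1$ the inner double sum is empty, so the bound reduces to the classical Lemma \ref{mvt1} in that regime, which is consistent. No powers of $\log x$ are at stake in this particular lemma, so no extra care beyond the standard argument is needed.
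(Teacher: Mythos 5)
Your proposal is correct and follows exactly the route the paper takes: the paper's proof consists precisely of citing Lemma 7.1 of Chapter 7 in \cite{IK} with the choice $Y=10T$, as noted in \cite{Ter}. Your additional bookkeeping (the nonnegativity of the integrand, the translation of the Fourier-support condition into $|m-n|\ll N/Y$, and enlarging the $h$-range) is the same routine verification the paper leaves implicit.
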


\begin{remark} Suppose that $a_n$ is the indicator function of some well-behaved sparse set with a density $\rho$ around $N.$ Then we expect that $\sum_{n \sim N} |a_n||a_{n+h}| \ll_h \rho^2 N,$ so that the second term is $\ll \rho^2 N,$ saving a factor of $\rho$ in the second term compared to Lemma \ref{mvt1}. 
\end{remark}

We will also need the following large values result for Dirichlet polynomials supported on primes (cf. Lemma 8 of \cite{MR} for the proof). We say that $\mathcal{T} \subset \R$ is well-spaced if for all distinct $t,u \in \mathcal{T}$ we have $|t-u| \geq 1.$
\begin{lemma} \label{large} Let $P(s)=\sum_{p\sim P} a_p p^{-s}$ with $|a_p| \leq 1.$ Let $\mathcal{T} \subset [-T,T]$ be a set of well-spaced points such that $|P(1+it)| \geq P^{-\alpha}$ for all $t \in \mathcal{T}.$ Then
\begin{align*}
|\mathcal{T}| \ll T^{2\alpha}P^{2\alpha}\exp \left \{2\log T \frac{\log \log T}{\log P}\right \}.
\end{align*} 
\end{lemma}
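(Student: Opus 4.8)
The plan is the standard $k$-th power method for controlling the number of large values of a prime-supported Dirichlet polynomial. We may assume $\alpha>0$ (since $|a_p|\le1$ forces $|P(1+it)|\le\sum_{p\sim P}p^{-1}\ll1/\log P$, so $\mathcal{T}=\emptyset$ for large $T$ when $\alpha\le0$) and, after enlarging the implied constant, that $P\le T$ (the bound follows immediately from Lemma \ref{mvt1} otherwise). Fix a positive integer $k$, to be chosen at the end, and write
\begin{align*}
P(s)^k=\sum_{P^k<n\le(eP)^k}b_nn^{-s},\qquad b_n=\sum_{\substack{p_1,\dots,p_k\sim P\\ p_1\cdots p_k=n}}a_{p_1}\cdots a_{p_k},
\end{align*}
so that $t\mapsto P(1+it)^k$ is a Dirichlet polynomial supported on the integers in $(P^k,(eP)^k]$, and $|P(1+it)^k|^2=|P(1+it)|^{2k}\ge P^{-2k\alpha}$ for every $t\in\mathcal{T}$. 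Consequently
\begin{align*}
|\mathcal{T}|\,P^{-2k\alpha}\le\sum_{t\in\mathcal{T}}|P(1+it)^k|^2,
\end{align*}
and the task is to bound the right-hand side.

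For this I would split the support into $\mathcal{O}(k)$ dyadic blocks, apply the mean value theorem for Dirichlet polynomials over the well-spaced set $\mathcal{T}\subset[-T,T]$ (the discrete analogue of Lemma \ref{mvt1}) on each block, and sum, to get
\begin{align*}
\sum_{t\in\mathcal{T}}|P(1+it)^k|^2\ll(T+P^k)\sum_{n}\frac{|b_n|^2}{n^2}\ll(T+P^k)P^{-2k}\sum_n|b_n|^2,
\end{align*}
using $n>P^k$ in the last step. Now $|b_n|\le\nu_k(n):=\#\{(p_1,\dots,p_k):p_i\sim P,\ p_1\cdots p_k=n\}$; since $\nu_k(n)$ is a multinomial coefficient we have $\nu_k(n)\le k!$, and hence
\begin{align*}
\sum_n|b_n|^2\le k!\sum_n\nu_k(n)=k!\,\bigl(\pi(eP)-\pi(P)\bigr)^k\ll k!\,\Bigl(\frac{CP}{\log P}\Bigr)^k
\end{align*}
by the prime number theorem. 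Collecting the bounds,
\begin{align*}
|\mathcal{T}|\ll P^{2k\alpha}\,k!\,\Bigl(\frac{C}{\log P}\Bigr)^k\bigl(TP^{-k}+1\bigr).
\end{align*}

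The last step is to optimise over $k$: take $k:=\lceil\log T/\log P\rceil$, so that $TP^{-k}\le1$ and $P^{2k\alpha}\le T^{2\alpha}P^{2\alpha}$. Using $k!\le k^k$ and $k\ll\log T/\log P$, a short computation gives $k!\,(C/\log P)^k\le\exp\bigl((1+o(1))\tfrac{\log T\log\log T}{\log P}\bigr)$; the essential point is that the prime number theorem produces the factor $(1/\log P)^k$, which is exactly what cancels the divisor-type loss $k!$. Absorbing the remaining polynomial factors (from Stirling's formula, from the dyadic splitting, and from the discrete mean value step) into the constant yields $|\mathcal{T}|\ll T^{2\alpha}P^{2\alpha}\exp\{2\log T\,\tfrac{\log\log T}{\log P}\}$, which is the claim; the constant $2$ in the exponent is deliberately wasteful. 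The one genuinely delicate point is this final balancing — $k$ must be large enough for the $\log P$ saving to beat $k!$, yet small enough that $P^k$ overshoots $T$ only by a bounded power — and these two competing requirements pin down $k\approx\log T/\log P$; the rest is routine.
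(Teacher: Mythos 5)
Your argument is correct and is essentially the proof the paper points to: the paper does not prove this lemma itself but cites Lemma 8 of \cite{MR}, whose proof is exactly this $k$-th moment argument with $k=\lceil \log T/\log P\rceil$, a discrete mean value theorem over well-spaced points, and the prime number theorem supplying the factor $(1/\log P)^k$ that beats the multinomial loss $k!$. The small liberties you take (the support of $P(s)^k$ reaches $(eP)^k$ rather than $P^k$, and the discrete mean value step costs an extra logarithm) only contribute factors of size $\exp(O(k))\cdot\log T$, which are indeed absorbed by the slack in the exponent $2\log T\log\log T/\log P$, so the proof goes through as you indicate.
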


Similarly as in \cite{MR}, the above lemma will be used in co-operation with the Hal\'asz-Montgomery inequality below (cf. \cite{IK} Theorem 9.6 for the proof, for instance).
\begin{lemma}\emph{\textbf{(Hal\'asz-Montgomery inequality).}} \label{halasz} Let $F(s) = \sum_{n \sim N} a_n n^{-s}$ and let $\mathcal{T} \subset [-T,T]$ be a set of well-spaced points. Then
\begin{align*}
\sum_{t\in \mathcal{T}} |F(it)|^2 \ll (N + |\mathcal{T}|\sqrt{T})\log(2T) \sum_{n \sim N} |a_n|^2.
\end{align*}
\end{lemma}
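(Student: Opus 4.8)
\emph{Proof proposal.}
The plan is to prove this by the standard duality (large-sieve type) argument: I would reduce the left-hand side to a diagonal term of size $\asymp N\sum_{n\sim N}|a_n|^2$ plus an off-diagonal term that is controlled by pointwise bounds for the exponential sums $\sum_{n\sim N}n^{i\tau}$, where $\tau$ runs over the differences of the points of $\mathcal{T}$.

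First I would dualise. Put $S:=\sum_{t\in\mathcal{T}}|F(it)|^2$ and $c_t:=\overline{F(it)}$, so that $S=\sum_{t\in\mathcal{T}}c_tF(it)$ and $\sum_{t\in\mathcal{T}}|c_t|^2=S$. Expanding $F(it)=\sum_{n\sim N}a_nn^{it}$, interchanging the summations, and applying Cauchy--Schwarz in $n$,
\[
S\ \le\ \left(\sum_{n\sim N}|a_n|^2\right)^{1/2}\left(\sum_{n\sim N}\left|\sum_{t\in\mathcal{T}}c_tn^{it}\right|^2\right)^{1/2}.
\]
Opening the square, the inner sum equals $\sum_{t,u\in\mathcal{T}}c_t\overline{c_u}\,G(t-u)$, where $G(\tau):=\sum_{n\sim N}n^{i\tau}$. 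The diagonal terms $t=u$ contribute $G(0)\sum_{t}|c_t|^2=\#\{n\sim N\}\cdot S\ll NS$; for the off-diagonal terms, using $|c_t\overline{c_u}|\le\tfrac12(|c_t|^2+|c_u|^2)$ together with $|G(-\tau)|=|G(\tau)|$, the total contribution is at most $S\cdot\mathcal{G}$, where $\mathcal{G}:=\max_{t\in\mathcal{T}}\sum_{u\in\mathcal{T},\,u\ne t}|G(t-u)|$. Feeding this back in and cancelling one factor of $S^{1/2}$ gives
\[
S\ \ll\ \left(\sum_{n\sim N}|a_n|^2\right)(N+\mathcal{G}),
\]
so it remains to establish $\mathcal{G}\ll(N+|\mathcal{T}|\sqrt T)\log(2T)$.

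For this I claim the pointwise bound $|G(\tau)|\ll\frac{N}{1+|\tau|}+|\tau|^{1/2}$ for all $|\tau|\ge1$, which follows from van der Corput's method. When $1\le|\tau|\le N$, the derivative $\tau/(2\pi u)$ of the phase $\tfrac{\tau}{2\pi}\log u$ stays in $(-\tfrac12,\tfrac12)$ with distance $\asymp|\tau|/N$ to the nearest integer as $u$ runs over $(N,eN]$, so the Kuzmin--Landau inequality gives $|G(\tau)|\ll N/|\tau|$. When $|\tau|\ge N$, the second derivative of the phase is $\asymp|\tau|/N^2$ and of fixed sign, so the second-derivative test gives $|G(\tau)|\ll|\tau|^{1/2}+N|\tau|^{-1/2}\ll|\tau|^{1/2}$; alternatively, in this range one may invoke the functional equation of $\zeta$. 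Since the nonzero differences $t-u$ with $t,u\in\mathcal{T}$ are well-spaced and lie in $[-2T,2T]$, each unit interval contains $O(1)$ of them for fixed $t$, so
\[
\sum_{u\in\mathcal{T},\,u\ne t}|G(t-u)|\ \ll\ N\sum_{1\le k\le 2T}\frac1k\ +\ |\mathcal{T}|\,(2T)^{1/2}\ \ll\ \left(N+|\mathcal{T}|\sqrt T\right)\log(2T),
\]
which yields the claimed bound on $\mathcal{G}$ and completes the argument.

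I expect the only step requiring real attention to be the pointwise bound for $G(\tau)=\sum_{n\sim N}n^{i\tau}$: everything else (duality, Cauchy--Schwarz, the diagonal/off-diagonal split, and the counting of well-spaced points) is routine. Here a purely trivial estimate is not enough --- using $|G(\tau)|\le N$ for $|\tau|\gg N$ would produce a term $|\mathcal{T}|N$ inside $\mathcal{G}$, much larger than permitted --- so the van der Corput bound $|G(\tau)|\ll|\tau|^{1/2}$ in that range is precisely what makes the $|\mathcal{T}|\sqrt T$ term appear. The organisation of the logarithmic factors then takes care of itself, the single $\log(2T)$ coming from $\sum_{1\le k\le 2T}k^{-1}$.
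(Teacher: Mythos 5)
Your proof is correct: the self-dualising Cauchy--Schwarz step with $c_t=\overline{F(it)}$, the diagonal/off-diagonal split, the Kuzmin--Landau bound $\ll N/|\tau|$ for $1\le|\tau|\le N$ and the second-derivative bound $\ll|\tau|^{1/2}$ for $|\tau|\ge N$ applied to $\sum_{n\sim N}n^{i\tau}$, and the counting of well-spaced differences all check out and yield exactly $N\log(2T)+|\mathcal{T}|\sqrt{T}$. The paper gives no proof of its own but cites Theorem 9.6 of Iwaniec--Kowalski, whose argument is essentially the one you present (duality plus pointwise exponential-sum estimates), so your route coincides with the intended one.
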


For any compactly supported $g:\R \to \C$ of bounded variation, define the Mellin transform
\begin{align*}
\hat{g}(s) := - \int_0^{\infty} z^s dg(z). 
\end{align*}
For any such $g$ we have for all $z >0$ the Mellin inversion formula
\begin{align*}
g(z) = \frac{1}{2\pi i} \int_{\sigma-i\infty}^{\sigma+i\infty} \frac{z^{-s}}{s} \hat{g}(s)\, ds
\end{align*}
for any $\sigma$ such that the integrand $z^{-s} \hat{g}(s)/s$ is analytic for all $s=\sigma+it,$ and the integral converges absolutely. We give here properties of the Mellin transform of $f_{\eta,\xi}(z)$ as defined in (\ref{ffun}). The proof is a straightforward computation.
\begin{lemma} \label{mellin} Suppose that $\eta,\xi >0,$ $1-\eta-\xi >0.$ Denote $s=\sigma +it.$ Then
\begin{align*}
\widehat{f}_{\eta,\xi}(s) = \frac{(1+\eta+\xi)^{s+1}-(1+\eta)^{s+1}+(1-\eta-\xi)^{s+1}-(1-\eta)^{s+1}}{\xi(s+1)}.
\end{align*}
For $\sigma \geq 1/2$ we also have the asymptotics (uniformly for all $\xi,\eta$) 
\begin{align*}
\widehat{f}_{\eta,\xi}(s)/s &= (2\eta+\xi) + \mathcal{O}\left((1+|s|)(\eta^3+\xi^3)/\xi\right),
\\
| \widehat{f}_{\eta,\xi}(s)/s| &\ll \eta + \xi, \quad \quad \quad | \widehat{f}_{\eta,\xi}(s)/s| \ll \xi^{-1} |s|^{-1}|1+s|^{-1}.
\end{align*}
\end{lemma}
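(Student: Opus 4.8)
The plan is to handle the exact identity and the three estimates in turn; everything is elementary and I do not expect a genuine obstacle — the one mildly delicate point is matching the shape of the error term in the first asymptotic through the inequality $\xi(\eta+\xi)^2\ll\eta^3+\xi^3$. Throughout one keeps $\sigma$ in a bounded range and $\eta,\xi$ small, so that all implied constants are absolute.

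\emph{The closed form.} Since $f_{\eta,\xi}$ is continuous and piecewise linear, $df_{\eta,\xi}(z)$ is absolutely continuous with density $f_{\eta,\xi}'(z)$, which is $1/\xi$ on $(1-\eta-\xi,1-\eta)$, is $-1/\xi$ on $(1+\eta,1+\eta+\xi)$, and vanishes elsewhere. Inserting this into $\widehat f_{\eta,\xi}(s)=-\int_0^\infty z^s\,df_{\eta,\xi}(z)$ and using $\int_a^b z^s\,dz=(b^{s+1}-a^{s+1})/(s+1)$ on the two ramps, then collecting the four resulting terms over the common denominator $\xi(s+1)$, produces the stated formula.

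\emph{The two estimates on the second line.} One integration by parts gives $\widehat f_{\eta,\xi}(s)/s=\int_0^\infty z^{s-1}f_{\eta,\xi}(z)\,dz$, the boundary contributions vanishing because $f_{\eta,\xi}$ is compactly supported in $(0,\infty)$. As $0\le f_{\eta,\xi}\le1$ is supported on $[1-\eta-\xi,1+\eta+\xi]$, where $z^{\sigma-1}\ll1$, this integral is $\ll\int_0^\infty f_{\eta,\xi}=2\eta+\xi\ll\eta+\xi$. For the remaining estimate use the closed form directly: its numerator $(1+\eta+\xi)^{s+1}-(1+\eta)^{s+1}+(1-\eta-\xi)^{s+1}-(1-\eta)^{s+1}$ is a sum of four powers each of modulus $\ll1$, so dividing by $\xi|s|\,|s+1|$ gives $|\widehat f_{\eta,\xi}(s)/s|\ll\xi^{-1}|s|^{-1}|1+s|^{-1}$.

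\emph{The main-term asymptotic.} For $|s|\le\xi^{-1}$, integrate by parts once and insert the ramp density, then use the symmetry $f_{\eta,\xi}(1+u)=f_{\eta,\xi}(1-u)$ in a change of variables to obtain
\begin{align*}
\widehat f_{\eta,\xi}(s)/s=\frac{1}{s\xi}\int_{\eta}^{\eta+\xi}\bigl((1+u)^s-(1-u)^s\bigr)\,du .
\end{align*}
Write $(1+u)^s-(1-u)^s=2su+E(u)$, so that $E$ vanishes to third order at $u=0$. The linear part contributes $\tfrac{2}{\xi}\int_\eta^{\eta+\xi}u\,du=(2\eta\xi+\xi^2)/\xi=2\eta+\xi$. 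For $E$, Taylor's theorem with integral remainder — in which every factor $(1\pm u)^{s-3}$ is $\ll1$ for $u\le\eta+\xi$ — gives $E(u)\ll|s|^3u^3$, hence $\int_\eta^{\eta+\xi}E(u)\,du\ll|s|^3\xi(\eta+\xi)^3$ and the contribution of $E$ to $\widehat f_{\eta,\xi}(s)/s$ is $\ll|s|^2(\eta+\xi)^3$. Using $(\eta+\xi)^3\asymp\eta^3+\xi^3$, the weighted AM--GM inequality $\xi(\eta+\xi)^2\ll\eta^3+\xi^3$, and $|s|^2\le|s|\xi^{-1}$ in this range, this error is $\ll(1+|s|)(\eta^3+\xi^3)/\xi$. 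For $|s|\ge\xi^{-1}$ no expansion is needed: combining the bound $|\widehat f_{\eta,\xi}(s)/s|\ll\xi^{-1}|s|^{-1}|1+s|^{-1}$ just proved with $|2\eta+\xi|\ll\eta+\xi$ and a couple more applications of weighted AM--GM shows that $|\widehat f_{\eta,\xi}(s)/s-(2\eta+\xi)|\ll(1+|s|)(\eta^3+\xi^3)/\xi$ in this range as well, which completes the proof.
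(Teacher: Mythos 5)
Your proposal is correct and is essentially the "straightforward computation" the paper alludes to without writing out: the closed form from the ramp density of $f_{\eta,\xi}$, integration by parts together with the support $[1-\eta-\xi,1+\eta+\xi]$ for the two crude bounds, and a Taylor expansion of $(1+u)^s-(1-u)^s$ for the main term, with the range $|s|\geq \xi^{-1}$ absorbed into the error via the decay bound and AM--GM. Your standing convention that $\sigma$ stays in a bounded range and $\eta,\xi$ are small is the right reading of the lemma's uniformity claim and matches how it is used in the paper (with $\sigma\in\{1/2,1\}$ and $\eta,\xi=o(1)$).
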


We also require the following lemma, which follows from the Vinogradov zero-free region by using Perron's formula (cf. Harman \cite{Har} Chapter 1).
\begin{lemma} \label{vino} For all large enough $T,P,$ and  $s= \sigma + it,$ $|t| \sim T,$ we have
\begin{align*}
\left| \sum_{p \sim P} p^{-s} \right| \ll P^{1-\sigma} \exp \left( -\frac{\log P}{\log^{7/10} T}\right) + \frac{P^{1-\sigma} \log^3 P}{T}.
\end{align*}
\end{lemma}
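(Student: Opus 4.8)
The plan is to reduce the sum over primes to a sum over prime powers, apply truncated Perron's formula, and shift the line of integration to the boundary of the Vinogradov zero-free region; the only genuinely delicate point is the concluding bookkeeping, where several powers of $\log T$ have to be absorbed into the exponent $7/10$ and into the factor $\log^3 P$ of the statement.

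First I would write $p^{-s}=(\Lambda(p)/\log p)p^{-s}$ and complete the sum to all prime powers, so that
\begin{align*}
\sum_{p\sim P}p^{-s}=\sum_{P<n\le eP}\frac{\Lambda(n)}{\log n}\,n^{-s}-\sum_{\substack{P<p^{k}\le eP\\k\ge2}}\frac{1}{k}\,p^{-ks};
\end{align*}
the prime-power tail is trivially $\ll P^{1/2-\sigma}/\log P\le P^{1-\sigma}\exp(-\log P/\log^{7/10}T)$ once $\log^{7/10}T\ge2$. For the first sum, partial summation reduces matters to bounding $M(t):=\sum_{P<n\le t}\Lambda(n)n^{-s}$ uniformly for $t\in(P,eP]$, since
\begin{align*}
\sum_{P<n\le eP}\frac{\Lambda(n)}{\log n}\,n^{-s}=\frac{M(eP)}{\log(eP)}+\int_{P}^{eP}\frac{M(t)}{t\log^{2}t}\,dt
\end{align*}
costs only a bounded factor. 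To $M(t)$ I would apply truncated Perron's formula with abscissa $c=1-\sigma+1/\log P$ (so that $-\zeta'/\zeta(s+w)$ converges on $\mathrm{Re}\,w=c$) and truncation height $X\asymp T$, writing $M(t)$ as $\frac{1}{2\pi i}\int_{c-iX}^{c+iX}\bigl(-\frac{\zeta'}{\zeta}(s+w)\bigr)\frac{t^{w}-P^{w}}{w}\,dw$ up to an error $\ll P^{1-\sigma}\log P/X$ (the usual terms from $n$ near $t$ being absorbed after moving $t$ to a nearby half-integer).

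Next I would move the contour to $\mathrm{Re}\,w=1-\sigma-\eta(T+X)$, where $\eta(u)\asymp(\log u)^{-2/3}(\log\log u)^{-1/3}$ is the width of the Vinogradov zero-free region, and — by the standard device — choose $X$ so that $\zeta$ has no zero within $\ll 1/\log X$ of the heights $\pm X$. Crossing the simple pole at $w=1-s$ contributes the residue $\frac{t^{1-s}-P^{1-s}}{1-s}\ll P^{1-\sigma}/T$. On the shifted vertical line and on the two horizontal segments one uses the standard bound $\zeta'/\zeta(s+w)\ll\log^{2}(T+X)$, valid throughout the zero-free region, together with $|t^{w}|,|P^{w}|\ll P^{1-\sigma}\exp(-\eta(T+X)\log P)$ on the vertical line, which bounds the remaining contour integral by
\begin{align*}
\ll P^{1-\sigma}\exp\bigl(-\eta(T+X)\log P\bigr)\log^{\mathcal{O}(1)}(T+X)+\frac{P^{1-\sigma}\log^{\mathcal{O}(1)}(T+X)}{X}.
\end{align*}

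It remains to fit these errors into the stated shape, which is the step needing the most care. If $\log P\le\log^{7/10}T$, then $\exp(-\log P/\log^{7/10}T)\ge e^{-1}$, so the claimed bound already dominates the trivial estimate $\sum_{p\sim P}p^{-s}\ll\sum_{p\sim P}p^{-\sigma}\ll P^{1-\sigma}$ and there is nothing to prove; hence we may assume $\log P>\log^{7/10}T$. Then $\log T\ll(\log P)^{10/7}$, so every factor $\log^{\mathcal{O}(1)}(T+X)\ll\log^{\mathcal{O}(1)}T$ occurring above is $\ll\log^{3}P$, which disposes of all the terms carrying a $1/X\asymp 1/T$ (including the Perron and residue errors, now $\ll P^{1-\sigma}\log^{3}P/T$). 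For the main term one has $\eta(T+X)^{-1}\ll(\log T)^{2/3}(\log\log T)^{1/3}=o(\log^{7/10}T)$ because $2/3<7/10$, so $\exp(-\eta(T+X)\log P)$ beats $\exp(-\log P/\log^{7/10}T)$ by a factor exceeding any fixed power of $\log T$, which absorbs the accompanying $\log^{\mathcal{O}(1)}(T+X)$ (again using $\log P>\log^{7/10}T$ to make the gap genuinely super-logarithmic). Collecting the bounds gives $M(t)\ll P^{1-\sigma}\exp(-\log P/\log^{7/10}T)+P^{1-\sigma}\log^{3}P/T$ uniformly for $t\in(P,eP]$, and substituting this back into the two identities above completes the proof.
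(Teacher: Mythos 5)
Your argument is correct and is exactly the route the paper intends: the paper gives no proof of Lemma \ref{vino}, merely noting that it "follows from the Vinogradov zero-free region by using Perron's formula (cf. Harman, Chapter 1)", which is precisely your reduction to $-\zeta'/\zeta$ via Perron, contour shift to the Vinogradov--Korobov boundary, and residue/truncation bookkeeping. Your handling of the delicate final step (splitting off the trivial case $\log P \le \log^{7/10}T$ so that the stray $\log^{\mathcal{O}(1)}T$ factors can be absorbed into $\log^3 P$ and into the super-logarithmic gap between $\exp(-\eta\log P)$ and $\exp(-\log P/\log^{7/10}T)$) is sound.
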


We will need the approximate functional equation for $\zeta(s)$ for the Type I/II estimate. See Tao's blog post \cite{Tao} (Theorem 38) for a proof of the result in this form.
\begin{lemma} \label{approx} \emph{\textbf{(Approximate Functional Equation).}} Let $g: \R \rightarrow \C$ be $C^{\infty}$-smooth, bounded, and compactly supported. Then
\begin{align*}
\sum_{n } g( \log (n/N)) n^{-s} = \chi (s) \sum_{m} m^{s-1} g(\log (M /m))+ \mathcal{O}_{g, \epsilon} \left( |t|^{-3/4 + \epsilon}\right)
\end{align*}
for $s=1/2 + it,$ $M,N \gg 1,$ $2\pi MN = |t|,$ where $\chi:\C \to \C$ is such that $|\chi(1/2 + it)| =1.$
\end{lemma}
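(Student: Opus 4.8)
The plan is to write both sides as contour integrals, push the contour on the left past the pole of $\zeta$, apply the functional equation so that both integrands feature $\zeta(1-s-w)$, and then compare the two integrands via Stirling's asymptotics for $\chi$. After this the claimed bound reduces to square-root cancellation in a short smooth sum of $m^{it}$, and supplying that is the crux.

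Assume $t>0$ (the case $t<0$ follows by conjugation) and set $M:=t/(2\pi N)$, $F(s):=\sum_n g(\log(n/N))n^{-s}$, $D(s):=\sum_m g(\log(M/m))m^{s-1}$. Let $G(w):=\int_0^\infty g(\log u)u^{w-1}\,du=\int_{\R}g(v)e^{vw}\,dv$; since $g\in C_c^\infty$ this is entire with $G(\sigma+i\tau)\ll_{A,\sigma}(1+|\tau|)^{-A}$ and $w^kG(w)=(-1)^k\int_{\R}g^{(k)}(v)e^{vw}\,dv$. Inserting $g(\log u)=\frac{1}{2\pi i}\int_{(\sigma_0)}G(w)u^{-w}\,dw$, with $\sigma_0=2$ for $F$ and $\sigma_0=-2$ for $D$ (so the resulting Dirichlet series converge absolutely), gives
\begin{align*}
F(s)=\frac1{2\pi i}\int_{(2)}G(w)N^w\zeta(s+w)\,dw,\qquad D(s)=\frac1{2\pi i}\int_{(-2)}G(w)M^{-w}\zeta(1-s-w)\,dw.
\end{align*}
Shifting the contour for $F$ to $\mathrm{Re}\,w=-2$ crosses the pole of $\zeta$ at $s+w=1$, contributing $G(1-s)N^{1-s}\ll_A N^{1/2}t^{-A}$, which is negligible since $N<t$; on the new line $\mathrm{Re}(s+w)<0$, so $\zeta(s+w)=\chi(s+w)\zeta(1-s-w)$ and
\begin{align*}
F(s)-\chi(s)D(s)=\frac1{2\pi i}\int_{(-2)}G(w)\,\zeta(1-s-w)\,\big[\chi(s+w)N^w-\chi(s)M^{-w}\big]\,dw+O_A\!\big(N^{1/2}t^{-A}\big).
\end{align*}

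Now comes the Stirling input. From $(\log\chi)'(\sigma+iv)=\log(2\pi/|v|)+O(1/|v|)$ (uniform for bounded $\sigma$), integrating along the segment from $0$ to $w$ and exponentiating yields, uniformly for $|w|\le t^{1/10}$,
\[
\chi(s+w)=\chi(s)\Big(\tfrac{2\pi}{t}\Big)^w\big(1+\varrho(w)\big),\qquad \varrho(w)=\tfrac1t\,p(w)+O\!\Big(\tfrac{1+|w|^4}{t^2}\Big),
\]
with $\varrho$ holomorphic, $\varrho(0)=0$, and $p$ a fixed polynomial of degree $\le 2$ (leading coefficient $i/2$, all coefficients $O(1)$). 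Since $N^w(2\pi/t)^w=(2\pi N/t)^w=M^{-w}$, the bracket above equals $\chi(s)M^{-w}\varrho(w)$. The portion of the integral with $|\mathrm{Im}\,w|>t^{1/10}$ is $\ll_A t^{-A}$ (rapid decay of $G$, convexity bounds for $\chi,\zeta$); in the rest we substitute the expansion of $\varrho$, expand $\zeta(1-s-w)=\sum_m m^{s-1+w}$ (valid on $\mathrm{Re}\,w=-2$), interchange summation and integration, and use $\frac1{2\pi i}\int_{(-2)}w^kG(w)(m/M)^w\,dw=(-1)^kg^{(k)}(\log(M/m))$ to obtain
\[
F(s)=\chi(s)\sum_m g(\log(M/m))\,m^{s-1}+\frac{\chi(s)}{t}\sum_m h(\log(M/m))\,m^{s-1}+O\!\big(t^{-3/2}\big)
\]
for a fixed $h\in C_c^\infty$ (a linear combination of $g'$ and $g''$).

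It remains to show $\Sigma:=\sum_m h(\log(M/m))m^{s-1}=\sum_{m\asymp M}h(\log(M/m))m^{-1/2+it}\ll t^{1/4+\epsilon}$. Trivially $\Sigma\ll M^{1/2}$; on the other hand $\Sigma$ is a smoothly weighted sum of $m^{-1/2+it}$ at scale $M$, so van der Corput's method — equivalently, the classical sharp-cutoff approximate functional equation for $\sum_{m\le x}m^{-1/2+it}$ together with partial summation — gives $\Sigma\ll N^{1/2}\log t$. Hence $\Sigma\ll\min(M,N)^{1/2}\log t\ll(MN)^{1/4}\log t\ll t^{1/4+\epsilon}$, so the middle term in the last display is $\ll t^{-1}\cdot t^{1/4+\epsilon}=t^{-3/4+\epsilon}$; since $|\chi(1/2+it)|=1$ this is precisely the assertion. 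The main obstacle is this final estimate $\Sigma\ll t^{1/4+\epsilon}$: all of the rest is contour shifting and Stirling asymptotics, but the exponent $3/4$ rests entirely on square-root cancellation in a short sum of $m^{it}$; a secondary technical point is that Stirling's expansion of $\chi(s+w)$ is invoked only where $|\mathrm{Im}\,w|\ll t^{1/10}$, the complementary range being discarded through the super-polynomial decay of $G$.
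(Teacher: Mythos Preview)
The paper does not prove this lemma itself; it simply cites Theorem~38 of Tao's blog post for a proof. Your contour-integral approach (Mellin inversion, shift past the pole of $\zeta$, functional equation, Stirling expansion of $\chi(s+w)/\chi(s)$, then square-root cancellation for the residual sum) is the standard route and is essentially what one finds in that reference.

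There is, however, one genuine slip in your write-up. On the line $\mathrm{Re}\,w=-2$ you have $|M^{-w}|=M^{2}$, so the crude estimate for the $\varrho_2$-contribution is only
\[
\ll\ \frac{M^{2}}{t^{2}}\int |G(w)|\,(1+|w|^{4})\,|dw|\ \ll\ \frac{M^{2}}{t^{2}}\ =\ \frac{1}{4\pi^{2}N^{2}},
\]
which is $O(1)$ when $N$ is merely bounded --- not the $O(t^{-3/2})$ you assert in the displayed identity for $F(s)$. The fix is painless: after isolating the bracket $\chi(s)M^{-w}\varrho(w)$ you may move the truncated contour to $\mathrm{Re}\,w=-1$ (the pole of $\zeta(1-s-w)$ at $w=-s$ sits at height $-t$, outside the window $|\mathrm{Im}\,w|\le t^{1/10}$, and $\chi(s+w)$ has no singularities in the strip crossed). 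On that line $|\zeta(1-s-w)|\ll 1$ and $|M^{-w}|=M\le t/2\pi$, so the $\varrho_2$-term contributes $\ll M/t^{2}\ll t^{-1}$, which already beats $t^{-3/4}$. Alternatively, take one more explicit term in the Stirling expansion of $\varrho$ before discarding the remainder; then even the $\mathrm{Re}\,w=-2$ bound $M^{2}/t^{3}\ll t^{-1}$ suffices. Everything else --- in particular the key estimate $\Sigma\ll\min(M,N)^{1/2}\log t$ via the second-derivative van~der~Corput bound --- is in order.
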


We also require a smoothing of the characteristic funtion of $[N, N^{1+\delta}];$ fix a function $g:\R \to [0,1]$ which is $C^{\infty}$-smooth and such that $g(x) \equiv 0$ for $x < 1,$ $g(x)\equiv1$ for $x >2.$  For $N \gg_\delta 1,$ define
\begin{align} \label{phi}
\phi_N(x) := \begin{cases} g(x/N), &N<x \leq 2N, \\
1, &2N<x<N^{1+\delta}/2 \\
1-g(2x/N^{1+\delta}), &N^{1+\delta}/2 \leq x < N^{1+\delta} \\
0, &\text{otherwise.}
 \end{cases}
\end{align} 
Notice that $\phi_N$ is also $C^{\infty}$-smooth and satisfies $\phi_N^{(k)}(x) \ll_{k,g} x^{-k}.$ We have gathered some of the properties of $\phi_N(x)$ in the following
\begin{lemma} \label{smooth} Let $0< \delta <1, \, N \gg_\delta 1.$
Then

\textbf{\emph{(i):}} For $|t| \, \gg 1$ the Mellin transform of $\phi_N$ satisfies
\begin{align*}
|\hat{\phi}_N (s) | \, \ll N^{\sigma(1+\delta)}|t|^{-1}.
\end{align*} 

\textbf{\emph{(ii):}} We have $|\hat{\phi}_N(iz)| \, \ll \min \{1, |z| \log N\} $ and
\begin{align*} 
\int_{-\infty}^{\infty} | \hat{\phi}_N(iz)| \frac{dz}{z}  \, \lesssim \, 1.
\end{align*}

\textbf{\emph{(iii):}} For  $|t|\, \leq N$ we have
 \begin{align*}
\sum_{n} \phi_N(n)n^{-s} = \frac{\hat{\phi}_N(1-s)}{1-s} + \mathcal{O}(N^{-\sigma + \mathcal{O}(\delta)}).
\end{align*} 

\textbf{\emph{(iv):}} For $s=1/2+it,$ $|t| \,> N$ we have
\begin{align*}
 \sum_{n} \phi_N(n)n^{-s} =  \chi(s) \sum_{n} \phi_N\left( \frac{|t|}{2\pi n}\right)n^{s-1} +  E(N,|t|) ,
\end{align*}
where $\chi$ is as in Lemma \ref{approx} so that $|\chi(1/2 + it)| =1$, and
\begin{align*}
|E(N,|t|)| \, \ll N^{-1/2 +  \mathcal{O}(\delta)}1_{|t| \leq N^{1+\delta}}+ |t|^{-2}N^{1/2+ \mathcal{O}(\delta)} +|t|^{-3/4 + o(1)}.
\end{align*}
\end{lemma}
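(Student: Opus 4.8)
The plan is to treat the four parts in increasing order of difficulty; all are standard facts about the Dirichlet series attached to a smooth, compactly supported weight. Throughout I would use the two elementary identities $\hat{\phi}_N(s)=-\int_0^\infty z^s\phi_N'(z)\,dz$ and, after one integration by parts, $\hat{\phi}_N(s)/s=\int_0^\infty z^{s-1}\phi_N(z)\,dz$ (all boundary terms vanish since $\phi_N$ is supported in a compact subset of $(0,\infty)$). For (i), one further integration by parts gives $\hat{\phi}_N(s)=\frac{1}{s+1}\int_0^\infty z^{s+1}\phi_N''(z)\,dz$; since $\phi_N''$ is supported on $[N,2N]\cup[N^{1+\delta}/2,N^{1+\delta}]$ with $\phi_N''(z)\ll z^{-2}$ there, the integral is $\ll N^{\sigma}+N^{\sigma(1+\delta)}\ll N^{\sigma(1+\delta)}$, and as $|s+1|\ge|t|$ we obtain $|\hat{\phi}_N(s)|\ll N^{\sigma(1+\delta)}|t|^{-1}$.

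For (ii), the bound $|\hat{\phi}_N(iz)|\ll1$ is immediate from $|\hat{\phi}_N(iz)|\le\int_0^\infty|\phi_N'(z')|\,dz'\ll1$ (the total variation of $\phi_N$ being bounded), while $|\hat{\phi}_N(iz)|\ll|z|\log N$ follows by writing $\hat{\phi}_N(iz)=-\int_0^\infty((z')^{iz}-1)\phi_N'(z')\,dz'$—legitimate because $\hat{\phi}_N(0)=0$—and using $|(z')^{iz}-1|\le|z|\,|\log z'|\ll|z|\log N$ on the support of $\phi_N'$. Splitting the range of integration at $|z|=1/\log N$ and at $|z|=1$ and applying these two bounds together with part (i) at $\sigma=0$ for $|z|\ge1$, the integral becomes $\ll1+\log\log N\lesssim1$.

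For (iii) I would apply Poisson summation to $f(x):=\phi_N(x)x^{-s}$, so that $\sum_n\phi_N(n)n^{-s}=\sum_{k\in\Z}\hat{f}(k)$ with $\hat{f}(k)=\int_0^\infty\phi_N(x)x^{-s}e^{-2\pi ikx}\,dx$. The $k=0$ term equals $\int_0^\infty\phi_N(x)x^{-s}\,dx=\hat{\phi}_N(1-s)/(1-s)$, the stated main term. For $k\ne0$ the phase $\psi(x)=-t\log x-2\pi kx$ satisfies $|\psi'(x)|=|2\pi k+t/x|\ge2\pi|k|-1\gg|k|$ on the support (because $|t|\le N\le x$ there) and $|\psi''(x)|=|t|x^{-2}\ll N^{-1}$, so there is no stationary point; repeated non-stationary-phase integration by parts then yields $|\hat{f}(k)|\ll_m|k|^{-m}N^{1-\sigma-m+\mathcal{O}(\delta)}$ for every $m\ge1$, whence $\sum_{k\ne0}|\hat{f}(k)|\ll_mN^{1-\sigma-m+\mathcal{O}(\delta)}$ is, for $m$ large, far smaller than the claimed error $N^{-\sigma+\mathcal{O}(\delta)}$.

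Part (iv) is the approximate functional equation and is the main obstacle, since it is where the precise shapes of the three error terms have to be controlled. I would decompose $\phi_N=\sum_j g_j(\log(\cdot/N_j))$ into $\mathcal{O}_\delta(1)$ smooth bumps with $\sum_j g_j(\log(z/N_j))=\phi_N(z)$ for all $z>0$, apply the approximate functional equation of Lemma~\ref{approx} to each piece with dual parameter $M_j:=|t|/(2\pi N_j)\gg1$, and reassemble; since the identity persists with $z=|t|/(2\pi n)$, the dual weights recombine to $\phi_N(|t|/(2\pi n))$, giving the main term $\chi(s)\sum_n\phi_N(|t|/(2\pi n))n^{s-1}$ with error $\ll_\delta|t|^{-3/4+o(1)}$. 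The pieces with $M_j\ll1$, which occur only in the transitional range $N<|t|\le N^{1+\delta}$, I would instead estimate directly by Poisson summation as in (iii) combined with part (i); this is the source of the $N^{-1/2+\mathcal{O}(\delta)}1_{|t|\le N^{1+\delta}}$ term. Alternatively, and more transparently for the remaining term, one can start from $\sum_n\phi_N(n)n^{-s}=\frac{1}{2\pi i}\int_{(c)}\hat{\phi}_N(w)\zeta(s+w)w^{-1}\,dw$ for $c$ large and shift the contour past the pole of $\zeta(s+w)$ at $w=1-s$: the residue there is $\hat{\phi}_N(1-s)/(1-s)\ll N^{1/2+\mathcal{O}(\delta)}|t|^{-2}$ by part (i) (there is no contribution at $w=0$, since $\hat{\phi}_N(0)=0$), which accounts for the second error term, after which one inserts $\zeta(s+w)=\chi(s+w)\zeta(1-s-w)$ on a far-left line and undoes the Mellin transform to recover the reflected sum. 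Either way, the delicate point is the bookkeeping in the transitional range $N<|t|\le N^{1+\delta}$ at the exact strength claimed; everything else is routine.
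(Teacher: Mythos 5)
Your proposal follows essentially the same route as the paper: parts (i)--(ii) by the same integrations by parts and the same splitting of the $dz/z$ integral, part (iii) is in effect a direct proof of the Lemma 8.8 of Iwaniec--Kowalski that the paper simply cites, and part (iv) uses the same multiplicative bump decomposition of $\phi_N$, applying Lemma \ref{approx} to the pieces lying below $|t|$ and recombining the dual weights, while treating the pieces above $|t|$ (which occur only when $|t| \leq N^{1+\delta}$) by the direct (iii)/(i)-type estimate, exactly as in the paper, so that these transitional pieces produce the $N^{-1/2+\mathcal{O}(\delta)}1_{|t|\leq N^{1+\delta}}$ and $|t|^{-2}N^{1/2+\mathcal{O}(\delta)}$ terms. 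One small correction: the decomposition must use $\mathcal{O}(\log N)$ bumps of bounded logarithmic width ($e$-adic, as in the paper), not $\mathcal{O}_{\delta}(1)$ bumps, since finitely many bumps would have widths growing with $N$ and the constant $\mathcal{O}_{g,\epsilon}$ in Lemma \ref{approx} would then not be uniform; this is harmless because the extra factor $\log N \leq |t|^{o(1)}$ is absorbed into the $|t|^{-3/4+o(1)}$ error term.
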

\begin{proof}
\textbf{(i):} We integrate by parts to get (for $|t|\gg 1$)
\begin{align*}
| \hat{\phi}_N (s)|  =  \left |\frac{1}{1+s} \int_0^{\infty} z^{s+1} \phi_N''(z) \,  dz  \right | \ll |t|^{-1} \left(\int_{N}^{2N} +\int_{N^{1+\delta}/2}^{N^{1+\delta}} \right) z^{\sigma-1} \,dz \, \ll \, N^{\sigma (1+ \delta)} |t|^{-1}.
\end{align*} 

\textbf{(ii):} We have 
\begin{align*}
\hat{\phi}_N (s)  = -\int_0^{\infty} z^s \phi_N'(z) \,  dz = s\int_0^{\infty} z^{s-1} \phi_N(z) \, dz,
\end{align*}
where clearly the first integral gives $\ll 1$ and the second gives $ \ll |s| \log N$ for $\Re s = 0.$ Thus, by part (i) applied to the large $z$
 \begin{align*}
\int_{-\infty}^{\infty} | \hat{\phi}_N(iz)| \frac{dz}{z} \ll \int_0^{\log^{-1} N} \log N \, dz + \int_{\log^{-1} N}^{1} \frac{dz}{z} + \int_1^{\infty} \frac{dz}{z^2} \, \ll \log \log N.
\end{align*}

\textbf{(iii):} This follows directly from Lemma 8.8 of \cite{IK}.

\textbf{(iv):}  We partition $\phi_N(x)$ smoothly $e$-adically into $\sum_{k}\psi_k(x),$ where each $\psi_k$ is of the form $x \mapsto g_k(\log x - k)$ for some $C^{\infty}$-smooth, bounded and compactly supported $g_k.$ Clearly we can choose $\psi_k(x)$ so that $|\psi_k''(x)| \, \ll_k \, e^{-2k}$ (by a similar smoothing as in (\ref{phi}) but for $e$-adic intervals).  If $N<|t| \leq  e^k/100,$ we have again by Lemma 8.8 of \cite{IK}
\begin{align*}
\sum_{n} \psi_k(n)n^{-s} = \frac{\hat{\psi}_k(1-s)}{1-s} + \mathcal{O}(N^{-\sigma + \mathcal{O}(\delta)}),
\end{align*}
where integration by parts yields
\begin{align*}
\left |\frac{\hat{\psi}_k(1-s)}{1-s} \right | = \left |\frac{1}{(1+s)(1-s)}\int \psi_k''(z) z^{s+1} \, dz \right | \, \ll \, |t|^{-2}N^{\sigma + \mathcal{O}(\delta)}.
\end{align*}
For $|t| \, > e^k/100$ we apply Lemma \ref{approx} to each $\psi_k,$ and recombine the functions $\psi_k (|t|/(2 \pi n))$ to get the sum over $\phi_N(|t|/(2 \pi n)).$
\end{proof}

We also require the following bound for exceptionally smooth numbers (for the proof, see Theorem 1 in Chapter III.5 of \cite{Ten}, for instance):
\begin{lemma} \label{hrdd} For $2 \leq Z \leq X$ we have
\begin{align*}
\sum_{n \sim X, \, P^+(n) < Z} 1 \, \ll \, X e^{-u/2},
\end{align*}
where $u:= \log X/\log Z.$
\end{lemma}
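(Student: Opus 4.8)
The statement to prove is Lemma~\ref{hrdd}: for $2 \leq Z \leq X$ we have $\sum_{n \sim X,\, P^+(n) < Z} 1 \ll X e^{-u/2}$, where $u := \log X / \log Z$. This is a classical estimate on the counting function $\Psi(X,Z)$ of $Z$-smooth numbers up to $X$ (here restricted to the dyadic range $n \sim X$, which only helps). The cleanest elementary route is a Rankin-type argument: for any $\sigma \in (0,1)$,
\[
\sum_{\substack{n \leq eX \\ P^+(n) < Z}} 1 \;\leq\; \sum_{\substack{n \geq 1 \\ P^+(n) < Z}} \left(\frac{eX}{n}\right)^{\sigma} \;=\; (eX)^{\sigma} \prod_{p < Z} \left(1 - p^{-\sigma}\right)^{-1},
\]
the product being over primes, and it converges since $\sigma > 0$. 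The plan is then to choose $\sigma = 1 - 1/\log Z$ (the standard near-optimal choice) and estimate the Euler product.

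The key steps are as follows. First I would take logarithms of the Euler product and write $\log \prod_{p<Z}(1-p^{-\sigma})^{-1} = \sum_{p < Z} \sum_{k \geq 1} \frac{p^{-k\sigma}}{k}$. The $k \geq 2$ terms contribute $O(1)$ uniformly (they are dominated by $\sum_p \sum_{k\geq 2} p^{-k/2}$ since $\sigma \geq 1/2$ for $Z$ large, and for bounded $Z$ everything is trivial anyway). For the main $k=1$ term, with $\sigma = 1 - 1/\log Z$, one writes $p^{-\sigma} = p^{-1} \cdot p^{1/\log Z} = p^{-1} \exp(\log p / \log Z)$; since $p < Z$ we have $\log p / \log Z < 1$, so $\exp(\log p/\log Z) \leq e$, giving $\sum_{p<Z} p^{-\sigma} \leq e \sum_{p < Z} p^{-1} = e(\log\log Z + O(1))$ by Mertens. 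Hence $\prod_{p<Z}(1-p^{-\sigma})^{-1} \ll \exp(O(\log\log Z)) = (\log Z)^{O(1)}$. Meanwhile $(eX)^{\sigma} = eX \cdot X^{-1/\log Z} = e X \exp(-\log X/\log Z) = eX e^{-u}$. Combining, $\Psi(eX, Z) \ll X e^{-u} (\log Z)^{O(1)}$.

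To finish, I would absorb the $(\log Z)^{O(1)}$ factor into the exponential at the cost of worsening the exponent from $u$ to $u/2$: indeed $(\log Z)^{O(1)} = \exp(O(\log\log Z)) \leq \exp(u/2)$ precisely when $u \geq c\log\log Z$ for a suitable constant, i.e. when $\log X \geq c\, \log Z \log\log Z$. In the complementary range $\log X < c\,\log Z\log\log Z$ one has $u = O(\log\log Z)$, so $e^{-u/2} \gg (\log Z)^{-O(1)}$ is bounded below by a negative power of $\log Z$, while trivially $\Psi(eX,Z) \leq eX$; thus $\Psi(eX,Z) \ll X e^{-u/2}$ holds in that range too (possibly after enlarging the implied constant). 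Splitting into these two cases and choosing the constant $c$ appropriately yields the claim in all ranges $2 \leq Z \leq X$. The main (and really only) subtlety is the bookkeeping in absorbing the $(\log Z)^{O(1)}$ loss from the Euler product into the exponent $e^{-u/2}$ — this is exactly why the lemma is stated with $u/2$ rather than $u$, and one must handle the small-$u$ regime separately rather than by brute absorption. Since the excerpt anyway cites Tenenbaum's book for the full proof, I would state the Rankin bound and the Mertens estimate and refer there for the routine details of the case split.
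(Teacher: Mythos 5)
The paper itself gives no proof of this lemma (it simply cites Theorem 1 in Chapter III.5 of Tenenbaum), so attempting a self-contained Rankin-type argument is reasonable, and your Rankin step is fine as far as it goes: with $\sigma = 1-1/\log Z$ you correctly obtain $\sum_{n\sim X,\,P^+(n)<Z}1 \ll X e^{-u}(\log Z)^{O(1)}$, and the absorption of $(\log Z)^{O(1)}$ into $e^{u/2}$ is legitimate precisely when $u \geq C\log\log Z$.

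The gap is in your treatment of the complementary range $u < C\log\log Z$. There you argue that since $e^{-u/2}\gg(\log Z)^{-O(1)}$ and trivially $\Psi(eX,Z)\le eX$, the bound $\Psi(eX,Z)\ll Xe^{-u/2}$ follows; this deduction is invalid, because $Xe^{-u/2}=o(X)$ as soon as $u\to\infty$, so the trivial bound only yields the claim when $u=O(1)$. The intermediate regime $1\ll u\le C\log\log Z$ (for instance $\log Z=\log X/(\log\log X)^{1/2}$, giving $u=(\log\log X)^{1/2}$ while $\log\log Z\asymp\log\log X$) is covered by neither of your cases: the Rankin bound still carries the unabsorbed $(\log Z)^{O(1)}$ loss, and the trivial bound is off by the factor $e^{u/2}\to\infty$. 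Choosing $\sigma=1-1/(2\log Z)$ instead does not rescue this, since the Euler product is still of size $\asymp\log Z$ and one only gets $Xe^{-u/2}\log Z$. Handling this middle range requires an additional input beyond a single Rankin application — e.g. an iteration/induction on $\lfloor u\rfloor$ of Buchstab type, which is in effect what the proof cited from Tenenbaum provides — so as written your argument does not establish the lemma uniformly in $2\le Z\le X$.
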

 
We will make use of the following result of Shiu \cite{Shiu}. Note that most of the cases where we apply Shiu's bound could also be handled by direct computations, not unlike some which we will have to carry out (cf. proof of Lemma \ref{trivial} below for instance); we use the more general result to sidestep these calculations whenever possible.
\begin{lemma} \label{shiu} Let $\eta > 0,$ and let $g$ be a non-negative multiplicative function such that there exists a constant $C >0$ such that
\begin{align*}
g(p^k) \, \leq C^k, \quad \text{and} \quad g(n) \, \ll_{\epsilon} \, n^{\epsilon}, \, \forall \epsilon >0
\end{align*} 
Then, for $X^{\eta} \ll  Y  \ll X$,
\begin{align*}
\sum_{ X -Y \leq n \leq X} g(n) \, \ll \, Y \prod_{p \leq X} \left( 1+\frac{g(p) -1}{p} \right).
\end{align*}
\end{lemma}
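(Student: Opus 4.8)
\noindent\textbf{Proof plan (following Shiu \cite{Shiu}).}
Write $M:=Y\prod_{p\le X}\bigl(1+(g(p)-1)/p\bigr)$ for the claimed bound. Since $0\le g(p)\le C$ (take $k=1$ in the hypothesis) and $\prod_{p\le X}(1-1/p)\asymp(\log X)^{-1}$ by Mertens' theorem, one has $M\asymp\tfrac{Y}{\log X}\exp\bigl(\sum_{p\le X}g(p)/p\bigr)$; in particular $M\gg Y/\log X$. The goal is to show $\sum_{X-Y<n\le X}g(n)\ll M$. Shiu's strategy is to split each $n$ into a ``smooth'' and a ``rough'' part, estimate the rough part by a Brun--Titchmarsh type sieve bound, and handle the smooth part by multiplicativity (Euler products); the role of the assumption $Y\gg X^\eta$ is precisely to make the sieve applicable on an interval long enough for the fundamental lemma of sieve theory to be effective, while the bound $g(n)\ll_\epsilon n^\epsilon$ will be used only to guarantee convergence of the relevant Euler products.

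\emph{The basic decomposition and the principal range.} Fix a large constant $t=t(\eta,C)$, put $z:=X^{1/t}$, and always take $t>2/\eta$ so that $z^{2}\ll Y$. For $n\le X$ write $n=ab$, where $a$ is the $z$-smooth part of $n$ and $b=n/a$ its $z$-rough part; then $(a,b)=1$, $g(n)=g(a)g(b)$, and every prime of $b$ exceeds $z=X^{1/t}$, so $\Omega(b)\le t$ and hence $g(b)\le C^{t}=O_\eta(1)$. Suppose first that the smooth part satisfies $a\le Y/z^{2}$. Then the interval $\bigl((X-Y)/a,\,X/a\bigr]$ has length $\ge z^{2}$, so by the fundamental lemma of sieve theory the number of $z$-rough integers in it is $\ll(Y/a)\prod_{p\le z}(1-1/p)\ll tY/(a\log X)$. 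Summing over the admissible $z$-smooth $a$ and using the Euler-product bound
\[
\sum_{a\ z\text{-smooth}}\frac{g(a)}{a}\ \le\ \prod_{p\le z}\Bigl(1+\frac{g(p)}{p}+\frac{g(p^{2})}{p^{2}}+\cdots\Bigr)\ \ll_{\epsilon}\ \exp\Bigl(\sum_{p\le z}\frac{g(p)}{p}\Bigr)\ \ll_\eta\ \exp\Bigl(\sum_{p\le X}\frac{g(p)}{p}\Bigr)
\]
(each local factor equals $1+g(p)/p+O_\epsilon(p^{-2+2\epsilon})$; this is the one place $g(n)\ll_\epsilon n^\epsilon$ enters, namely to bound the tail $\sum_{\nu\ge2}g(p^{\nu})p^{-\nu}$ at the finitely many primes $p\le C$; and $\sum_{z<p\le X}g(p)/p\ll_\eta1$), we conclude that the $n\in(X-Y,X]$ with $a\le Y/z^{2}$ contribute $\ll_\eta M$.

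\emph{The complementary range (the crux).} It remains to bound the $n\in(X-Y,X]$ whose $z$-smooth part exceeds $Y/z^{2}$; for these the $b$-interval above is too short and the sieve bound fails. These $n$ are ``unusually smooth'', and the plan is to process them by iterating the decomposition at a decreasing sequence of scales $z=z_{0}>z_{1}>\cdots$: at stage $i$ one peels off the block of $n$ composed of the primes in $(z_{i+1},z_{i}]$, collecting the associated local Euler factor, and invokes the sieve of the principal range at the first stage at which the remaining block lies in a long enough interval. The product of the per-stage Euler factors telescopes to $\exp\bigl(\sum_{p\le z_{0}}g(p)/p\bigr)$, so no loss accumulates over the stages. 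One must also dispose of the genuinely $O_\epsilon(1)$-smooth residue, i.e.\ the $n$ with $P^{+}(n)\le C_{0}$ for a suitable constant $C_{0}=C_{0}(\epsilon)$: there are only $O_{C_0}\bigl((\log X)^{\pi(C_{0})}\bigr)$ such $n$ in $(X-Y,X]$ and each has $g(n)\ll_\epsilon n^\epsilon\le X^\epsilon$, so they contribute $\ll_\epsilon X^{2\epsilon}\ll M$.

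\emph{The main obstacle.} The principal range is essentially routine. The real difficulty lies in the complementary range. The key point is that the bound $g(n)\ll_\epsilon n^\epsilon$ is \emph{useless} on any set of integers of density $\gg X^{-o(1)}$ inside $(X-Y,X]$, since there it overshoots the truth by a factor $X^{o(1)}$, which swamps $M\approx Y(\log X)^{-O(1)}$ whenever $Y$ is only a small power of $X$; hence the unusually smooth $n$ \emph{must} be treated multiplicatively throughout, via the iteration. The delicate tasks are, first, choosing the scales $z_{i}$ and the per-stage estimates so that the sieve is ultimately applied on an interval of length at least a fixed power of the current cutoff --- this is exactly what forces the hypothesis $Y\gg X^{\eta}$ and is the reason the conclusion cannot be pushed down to intervals as short as a power of $\log X$ --- and, second, controlling the ``trivial'' count of integers in the very short sub-intervals that arise along the way, so that in aggregate they contribute no more than $O(M)$. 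Carrying this bookkeeping out carefully is the heart of Shiu's argument.
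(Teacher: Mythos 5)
The paper itself does not prove this lemma: it is quoted as Shiu's theorem and the proof is delegated to \cite{Shiu}. Your proposal correctly identifies the statement and the general shape of Shiu's argument, and your treatment of the principal range (the $n$ whose $z$-smooth part $a$ satisfies $a\le Y/z^{2}$, handled by the fundamental lemma of sieve theory plus an Euler product, with $g$ on the $z$-rough part bounded by $C^{t}$) is sound. But there is a genuine gap exactly where you yourself place ``the heart of Shiu's argument'': the complementary range $a>Y/z^{2}$ is never actually treated. You describe an iteration over unspecified scales $z_{0}>z_{1}>\cdots$ in which one ``peels off'' the block of primes in $(z_{i+1},z_{i}]$, ``collects the local Euler factor'', and ``invokes the sieve at the first stage at which the remaining interval is long enough'', but no scales are chosen, no per-stage estimate is stated, and it is not verified that the constraints defining the bad set at earlier stages can be retained (without them there is no saving at all), that the number of stages does not introduce a divergent factor, or that the boundary/trivial counts aggregate to $O(M)$. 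As written, this step cannot be checked, and the conclusion of the lemma is not established by the proposal.

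It is also worth noting that the plan you sketch for the crux is not quite what Shiu does, and the difference matters. In \cite{Shiu} the range $a>Y/z^{2}$ is graded according to $P^{+}(a)$ (and the number of prime factors), and the decisive input is the sparsity of smooth numbers (a Rankin-type estimate, in the spirit of Lemma \ref{hrdd}): every bad $n$ has a $P^{+}(a)$-smooth divisor in a window of size a fixed power of $X$, and the crude bound $g(n)\ll_{\epsilon}n^{\epsilon}$, with $\epsilon$ chosen small in terms of $\eta$, is only affordable once the smoothness parameter is so small that this divisor count saves a genuine power of $X$ (essentially $P^{+}(a)$ of size a power of $\log X$ or smaller); the intermediate ranges must still be handled multiplicatively with the sieve. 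Your own ``main obstacle'' paragraph recognizes that $n^{\epsilon}$ is useless on sets of density $X^{-o(1)}$, but your sketch then disposes only of the $P^{+}(n)\le C_{0}$ residue and leaves the whole intermediate regime to the unexecuted iteration. To stand on its own the argument would need that case analysis carried out in full; otherwise the honest course is what the paper does, namely to cite Shiu's theorem.
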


\section{Type I/II estimate} \label{typei}
In this section we prove our Type I/II estimate. Before this we  briefly discuss the strategy used in Chapter 5 of Harman's book \cite{Har} (used in the case of intervals $[x,x+x^{1/2+\epsilon}]$). There one considers Type I/II sums of the form
\begin{align*}
\sum_{\substack{uv nq_1\cdots q_K  \in [x,x+x^{1/2+\epsilon}] \\
u \sim U, \, v \sim V, \, q_i \sim Q }} a_u b_v,
\end{align*}
where $V \leq x^{1/2-\gamma},$ $U \leq x^{1/4},$ and $Q=x^{\gamma/K}$ (the condition $U \leq x^{1/4}$  can actually be loosenend to $VU^2 \leq x^{1-\gamma},$ which is the form given in Harman's book, but the estimate is needed only for $U \leq x^{1/4},$ cf. Lemma 5.3 in Chapter 5 of \cite{Har}).  By applying Perron's formula, these sums have an asymptotic formula if we can obtain a mean value estimate of the form
\begin{align*}
\int_{1/2+iN}^{1/2+ix^{1/2-\epsilon}} |N(s)A(s)B(s)Q(s)^K| |ds| \, \ll x^{1/2}\log^{-C}x,
\end{align*}
where for $N \asymp x^{1-\gamma}/(UV)\geq x^{1/4}$
\begin{align*}
N(s)= \sum_{n \sim N} n^{-s}, \quad A(s)= \sum_{u\sim U}a_u u^{-s}, \quad B(s)=\sum_{v\sim V} b_v v^{-s}, \quad Q(s)=\sum_{q\sim Q, \, q \in \mathbb{P}} q^{-s}.
\end{align*}
By applying the approximate functional equation of $\zeta(s)$ (a variant of Lemma \ref{approx}), the polynomial $N(s)$ can essentially be replaced by $N_t(s)=\sum_{n \sim |t|/(2 \pi N)} n^{-s}.$ By applying Perron's formula to remove the cross-condition between $n$ and $|t|,$ this can be replaced by 
\begin{align*}
M(s)=\sum_{n \leq x^{1/2-\epsilon}/N} c_n n^{-s}.
\end{align*}  
We now note that $Ux^{1/2-\epsilon}/N \ll x^{1/2-\epsilon},$ and $VQ^K \ll x^{1/2}.$ Hence, by Cauchy-Schwarz and Lemma \ref{mvt1}
\begin{align*}
\int_{1/2+iN}^{1/2+ix^{1/2-\epsilon}} & |M(s)A(s)B(s)Q(s)^K| |ds| \,\\
& \ll \left( \int_{1/2+iN}^{1/2+ix^{1/2-\epsilon}} |B(s)Q(s)^K|^2 |ds|\right)^{1/2}\left( \int_{1/2+iN}^{1/2+ix^{1/2-\epsilon}} |M(s)A(s)|^2 |ds|\right)^{1/2} \\
& \ll \left(x^{1/2-\epsilon} + VQ^K \right)^{1/2}\left(x^{1/2-\epsilon}+ Ux^{1/2-\epsilon}/N \right)^{1/2} \log^C x \ll x^{1/2-\epsilon/2}\log^C x \\
&\ll x^{1/2} \log^{-C} x,
\end{align*}
as was required.

In our case we must tread more carefully to avoid losing of powers of $\log x$; for instance, we cannot divide the variables $u$ and $v$ into dyadic ranges $U,V,$ as this would cause the density to drop too much. Instead, we divide the variables into longer ranges $[U^{1-\epsilon}, U]$ and $[V^{1-\epsilon}, V].$ This means that $N(s)$ must also be replaced by a longer polynomial; by choosing $\epsilon$ small enough in terms of $\delta,$ we can replace $N(s)$ by $\sum_{n} \phi_N(n)n^{-s},$ where $\phi_N(n)$ is as in (\ref{phi}). Notice that in Harman's argument, applying Perron's formula to remove the cross-condition $n\sim |t|/(2\pi N)$ causes a loss of size $\log x.$ By using the smooth function $\phi_N(n)$ and Lemma \ref{smooth}, this cross-condition is replaced by a smoothed cross-condition $\phi_N(|t|/(2 \pi n)),$ which can be removed with losses bounded by $\lesssim 1.$ 

Having the variables $u,v$ in longer ranges instead of dyadic means that the condition $U \leq x^{1/4}$ must be strengthened slightly to $U \leq x^{1/4-10\delta}.$ This also allows us to weaken the condition $V < x^{1/2-\gamma}$ to  $V < x^{1/2-\gamma+\delta};$ this will be important as the Type II information we can obtain in the next section  covers only coefficients supported on $[x^{1/2-\gamma+\delta/2}, x^{1/2-\delta/2}]$ instead of the full range $[x^{1/2-\gamma}, x^{1/2}].$ Precisely, our Type I/II estimate takes the form

\begin{prop} \emph{\textbf{(Type I/II estimate).}} \label{typeiestimate} Let $1 \leq U\leq x^{1/4-10\delta}$ and $1 \leq V\leq x^{1/2-\gamma+\delta}$. Let $a_u, b_v$ be complex coefficients satisfying $|a_u| \lesssim 1_{(u,P(w))=1},$ $|b_v|   \lesssim 1_{(v,P(w))=1}$. Then
\begin{align*}
\left | \frac{1}{y}\sum_{\substack{u,v,n \\ u \in [U^{1-\epsilon},U], \, v \in [V^{1-\epsilon},V]\\ (n,P(w))=1 }} W_{\mathcal{A}}(uvn) a_ub_v - \frac{\log^{100}x}{x} \sum_{\substack{u,v,n \\ u \in [U^{1-\epsilon},U], \, v \in [V^{1-\epsilon},V] \\ (n,P(w))=1 }} W_{\mathcal{B}}(uvn)a_ub_v \right|  \\ \ll (\log^{-\delta} x) \frac{\Omega}{\log x}, \hspace{-15pt}
\end{align*}
where $\Omega$ is as in (\ref{omega}).
\end{prop}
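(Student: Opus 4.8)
The strategy is to separate the smooth variables $c,c',q_i,p_j,r_l$ from the bilinear part $a_ub_v$ and from the long smooth $n$, apply Perron's formula (in its Mellin form, using the smoothed weights $f_\A,f_\B$ which supply rapid decay in $t$), and thereby reduce the claimed estimate to a Dirichlet polynomial mean value bound over a well-spaced set of $t$. More precisely, I would write $W_\A(uvn)$ via the Mellin inversion formula for $f_\A$: each of $W_\A,W_\B$ becomes a contour integral of $\widehat f_\A(s)/s$ (resp. $\widehat f_\B(s)/s$) times $x^s$ times a product of Dirichlet polynomials
\begin{align*}
N(s)\,A(s)\,B(s)\,C(s)^2\,Q_1(s)Q_2(s)Q_3(s)\,P_1(s)\cdots P_J(s)\,R(s)^L,
\end{align*}
where $N(s)=\sum_n\phi_N(n)n^{-s}$ with $\phi_N$ as in (\ref{phi}) (here I replace the sharp cutoff on $n$ by the smooth $\phi_N$; since $U\le x^{1/4-10\delta}$ and $V\le x^{1/2-\gamma+\delta}$ the variable $n$ has length $N\asymp x^{1-\gamma+o(1)}/(UV)\gg x^{1/4+o(1)}$, so by choosing $\epsilon$ small in terms of $\delta$ the smoothing costs only a factor $\lesssim1$ by Lemma \ref{smooth}(ii)), $A(s)=\sum_{u\in[U^{1-\epsilon},U]}a_uu^{-s}$, $B(s)$ analogously, $C(s)=\sum_{c\in I,\,P^+(c)\le w}c^{-s}$, and the remaining factors are the Dirichlet polynomials over the primes in their prescribed ranges. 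The two terms (the $W_\A$ sum divided by $y$, and the $W_\B$ sum times $\log^{100}x/x$) differ only through $\widehat f_\A(s)/(sy)$ versus $\widehat f_\B(s)\log^{100}x/(sx)$; by Lemma \ref{mellin}, both equal $(2+o(1))$ near $s=0$ (using $2\eta_\A+\xi_\A=(2+o(1))y/x$ and $2\eta_\B+\xi_\B=(2+o(1))\log^{-100}x$), so the main terms at $t$ near $0$ cancel up to the desired error, and what remains is to bound the tail $|t|\ge T_0$ for, say, $T_0=\log^{C}x$.

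For the tail I would move the contour to $\Re s=1$ and bound
\begin{align*}
\int_{|t|\ge T_0}\Big|\tfrac{\widehat f_\A(1+it)}{1+it}\Big|\,\big|N(1+it)A(1+it)B(1+it)C(1+it)^2Q_1(1+it)Q_2(1+it)Q_3(1+it)P_1(1+it)\cdots P_J(1+it)R(1+it)^L\big|\,dt
\end{align*}
and the corresponding integral with $f_\B$, and show each is $\ll(\log^{-\delta}x)\,\Omega/(y\log x)$ resp. $\ll(\log^{-\delta}x)\,x\Omega/(y^{2}\log x\cdot\log^{100}x)$, which after the normalization by $y$ (resp. $\log^{100}x/x$) gives the claimed $(\log^{-\delta}x)\Omega/\log x$. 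The key point is that the long smooth factor $N(1+it)$ is genuinely a zeta-type object: by Lemma \ref{smooth}(iii)--(iv) (applied with the roles of $N$ and the dual length $|t|/(2\pi N)$), $N(1+it)$ is, up to a unimodular $\chi$-factor and admissible errors, a Dirichlet polynomial of length $|t|/(2\pi N)$; combined with the short polynomial $A(1+it)$ (length $U\le x^{1/4-10\delta}$) this gives a polynomial of length $U|t|/(2\pi N)$, which for $|t|\le x^{1/2-\epsilon}$ stays $\le x^{1/2-5\delta}$, while the complementary factor $B(1+it)C(1+it)^2Q_1Q_2Q_3P_1\cdots P_JR^L$ has length $\asymp x^{1-\gamma+o(1)}/U\cdot U=x^{1-\gamma+o(1)}/(\text{short}) $ — more usefully, its length is $\asymp x\cdot (UV)^{-1}\cdot V\cdot(\text{primes})$; balancing, one side has length $\ll x^{1/2-5\delta}$ and the other $\ll x^{1/2}\log^{O(1)}x$, so Cauchy--Schwarz followed by Lemma \ref{mvt1} (the mean value theorem) on each factor gives
\begin{align*}
\ll\Big(x^{1/2-5\delta}+\tfrac{x^{1/2-\epsilon}}{U|t|/N}\Big)^{1/2}\Big(x^{1/2}\log^{O(1)}x+\cdots\Big)^{1/2}\cdot(\text{the }L^2\text{ norms of the coefficients}),
\end{align*}
and the coefficient $L^2$-norms, when multiplied out, reproduce $\Omega/y$ up to $\lesssim1$ (here I use that $a_u,b_v$ are bounded by $1_{(u,P(w))=1},1_{(v,P(w))=1}$, so $\sum|a_u|^2u^{-1}\ll\sum_{u\le U,\,(u,P(w))=1}u^{-1}=\log^{o(1)}x$ by Mertens, and likewise for the prime factors one has $\sum_{p\in I_j}p^{-1}$ reproducing the factors of $\Omega$). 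The surplus power $x^{-\epsilon/2}$ (or, on the very short pieces, the explicit power-saving from $T_0$) beats every $\log x$, giving the bound. For the small-$t$ range $T_0'\le|t|\le T_0$, with $T_0'=\log^{-100}x$ say, I would use Vinogradov's zero-free region via Lemma \ref{vino} on one of the prime polynomials $P_j(1+it)$ (whose length $\omega^{r^{-j}}\gg K$ is a fixed power exceeding any $\log$) to gain an arbitrary power of $\log x$; and the range $|t|\le T_0'$ is where the main terms of the two expressions agree up to $O((\eta_\A^3+\xi_\A^3)/\xi_\A\cdot(1+|t|))=O(\log^{-\delta}x\cdot y/x)$ by the first asymptotic in Lemma \ref{mellin}, which is exactly the error we can afford.

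The main obstacle is the bookkeeping of powers of $\log x$: one must verify that every manipulation — smoothing $n$ via $\phi_N$, passing to the dual polynomial through the approximate functional equation, the Perron truncation, the use of $e$-adic vs. slightly longer intervals for the small primes $q_1,q_2,r_l$ (controlled by (\ref{smallprimes})), and the longer-than-dyadic ranges $[U^{1-\epsilon},U]$, $[V^{1-\epsilon},V]$ for $u,v$ — costs only a factor $\lesssim1$, so that the final saving comes entirely from the honest power $x^{-\epsilon/2}$ coming from the length mismatch in the mean value theorem, or from the Vinogradov factor in the intermediate range. A secondary technical point is that the error term $E(N,|t|)$ in Lemma \ref{smooth}(iv) must be shown to contribute acceptably after being paired (via Cauchy--Schwarz and Lemma \ref{mvt1}) with the complementary polynomials; since $|E(N,|t|)|$ carries a genuine power saving ($|t|^{-3/4+o(1)}$ or $N^{-1/2+O(\delta)}$), this is routine but needs to be written out, and the same remark applies to the tail $|t|>x^{1/2-\epsilon}$ of the original integral, which is absorbed using the rapid decay $|\widehat f_\A(1+it)/(1+it)|\ll\xi_\A^{-1}|t|^{-2}$ from Lemma \ref{mellin}.
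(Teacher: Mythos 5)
Your overall skeleton (Mellin inversion with the smoothed weights $f_\A,f_\B$, smoothing the long variable by $\phi_N$, the approximate functional equation from Lemma \ref{smooth}, then Cauchy--Schwarz and the mean value theorem) is the same as the paper's, but there is a genuine gap at the decisive step. You claim the final saving comes from ``the honest power $x^{-\epsilon/2}$ coming from the length mismatch in the mean value theorem.'' No such power saving exists here: the weight $\hat f_\A(s)/s$ only decays beyond $|t|\asymp x/y = x^{1/2}\log^{-\beta-10\delta+o(1)}x$, so the mean-value integrals must be taken over $T\asymp x/y$, and since both factors after Cauchy--Schwarz have length comparable to $x^{1/2}$ up to powers of $\log x$, the bound $(T+\mathrm{len}_1)^{1/2}(T+\mathrm{len}_2)^{1/2}$ is dominated by $T$, giving $\frac{x^{1/2}}{y}\times(\text{coefficient }L^2\text{ norms})$ with no power of $x$ to spare. (Your intermediate assertion that the dual length ``for $|t|\le x^{1/2-\epsilon}$ stays $\le x^{1/2-5\delta}$'' ignores that $|t|$ runs up to $x/y>x^{1/2-\epsilon}$.) The whole estimate is therefore a fight over powers of $\log x$, and you never carry out the computation that decides it: the $L^2$ norms do \emph{not} simply ``reproduce $\Omega/y$ up to $\lesssim 1$'' --- on the side containing the long smooth/dual variable together with the primes $p_j$ one loses the square root of the density plus a collision factor $(1+\log 1/\theta)^{J}$ coming from coincidences $np_1\cdots p_J=n'p_1'\cdots p_J'$, and the paper has to bound this via the divisor-correlation argument (Shiu's theorem) and then compare $\frac{x^{1/2}}{y}(\log x)(\log 1/\theta)^{J/2}(1+\log 1/\theta)^{J/2}$ against $\Omega/\log x$; this closes only because $y$ was \emph{defined} in (\ref{y}) to make it close, with a margin of a small power of $\log x$. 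Also, the paper must first factor $\Omega(s)=\Omega_1(s)\Omega_2(s)$ along the intervals $I_j$ (using $U\le x^{1/4-10\delta}$) so that both Cauchy--Schwarz halves are shorter than $x/y$; your ``balancing'' does not specify how the polynomials $C,Q_i,P_j,R^L$ are distributed, which is exactly where the construction's factorization property is used.

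A second gap: you silently drop the condition $(n,P(w))=1$ when you set $N(s)=\sum_n\phi_N(n)n^{-s}$. If the condition is kept, Lemma \ref{smooth}(iii)--(iv) (the zeta-type functional equation) does not apply to $N(s)$; if it is dropped, that must be justified. The paper handles this by sacrificing one of the two $w$-smooth variables: it merges $c$ with the rough variable $n$ to create a genuinely unrestricted long variable (Lemma \ref{implies}, with the error from $c>x^{\epsilon}$ controlled by Lemma \ref{hrdd} and Shiu's bound), which is also why the resulting $F(s)$ contains $C(s)$ to the first power rather than your $C(s)^2$. Your treatment of the small-$|t|$ range (pointwise comparison of the two weights, plus Lemma \ref{vino} on a prime polynomial for intermediate $t$) is a workable alternative to the paper's extraction of the main term from $\hat\phi_N(1-s)/(1-s)$, but it does not repair the two gaps above, which are where the proposition is actually won or lost.
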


The reason we study sums with the additional condition $(n,P(w))=1$ (instead of $n$ smooth) is that we require the coefficients $a_u,b_v$ in the Type I/II and Type II sums to be supported on $w$-almost-primes (cf. beginning of Section \ref{funprop}). However, recall that in the weight $W_\A$ we sum over a $w$-smooth variable $c.$ This means that we can obtain a long smooth variable by writing $m=cn$ (cf. proof of Lemma \ref{implies} below).

With this in mind, define for $\CC=\A$ or $\CC=\B$ the modified weights (without $c$)
\begin{align*}
\widetilde{W}_{\mathcal{C}}(n) &:= \sum_{\substack{nc'q_1q_2q_3 p_1 \cdots p_J r_1 \cdots r_L \asymp x }} f_{\mathcal{C}}(nc' q_1q_2q_3p_1 \cdots p_J r_1 \cdots r_L/x),
\end{align*}
where the summation runs over the same ranges as before (cf. (\ref{summation})).  Then Lemma \ref{implies} below reduces Proposition \ref{typeiestimate} to the following
\begin{prop} \label{typeiestimate2} Suppose that the assumptions of Proposition \ref{typeiestimate} hold. Then
\begin{align*}
\left | \frac{1}{y}\sum_{\substack{u,v,n \\ u \in [U^{1-\epsilon},U], \, v \in [V^{1-\epsilon},V] }} \widetilde{W}_{\mathcal{A}}(uvn) a_ub_v - \frac{\log^{100}x}{x} \sum_{\substack{u,v,n \\ u \in [U^{1-\epsilon},U], \, v \in [V^{1-\epsilon},V] }} \widetilde{W}_{\mathcal{B}}(uvn)a_ub_v \right|  \\ \ll (\log^{-\delta} x) \frac{\Omega}{\log x},
\end{align*}
where $\Omega$ is as in (\ref{omega}).
\end{prop}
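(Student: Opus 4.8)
The plan is to prove Proposition~\ref{typeiestimate2} by opening the weights $\widetilde{W}_{\mathcal{C}}$ via Mellin inversion and reducing the whole statement to a mean value estimate for a product of Dirichlet polynomials, one factor of which is a long ``$\zeta$-like'' polynomial to which we can apply the approximate functional equation. First I would write $f_{\mathcal{C}}(z) = \frac{1}{2\pi i}\int z^{-s}\widehat{f}_{\mathcal{C}}(s)/s\,ds$ on a vertical line with $\sigma$ close to $1$ (or $1/2$ after shifting), so that
\begin{align*}
\widetilde{W}_{\mathcal{C}}(n) = \frac{1}{2\pi i}\int x^{s}\frac{\widehat{f}_{\mathcal{C}}(s)}{s}\,\frac{1}{n^{s}}\,C'(s)Q_1(s)Q_2(s)Q_3(s)\prod_{j}P_j(s)\prod_{l}R_l(s)\,ds,
\end{align*}
where $C'(s)=\sum_{c'}c'^{-s}$, $Q_i(s)=\sum_{q_i}q_i^{-s}$, $P_j(s)=\sum_{p_j\in I_j}p_j^{-s}$, $R_l(s)=\sum_{r_l\sim\log^\epsilon x}r_l^{-s}$, with all the side-conditions from~(\ref{summation}) built in. Summing against $a_u b_v$ over the short ranges $u\in[U^{1-\epsilon},U]$, $v\in[V^{1-\epsilon},V]$ introduces two more Dirichlet polynomials $A(s)=\sum_u a_u u^{-s}$ and $B(s)=\sum_v b_v v^{-s}$, plus a ``short'' polynomial $N_0(s)=\sum_{n} \phi_{N}(n) n^{-s}$ recording the remaining variable $n$; here, crucially, I use the smooth majorant $\phi_N$ from~(\ref{phi}) rather than a sharp cutoff, because Lemma~\ref{smooth}(ii) tells us that the cost of introducing $\phi_N$ and later removing the induced smoothed cross-condition is only $\lesssim 1$, i.e.\ no power of $\log x$ is lost --- this is exactly the point the authors flag in the discussion preceding the proposition. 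The difference of the $\mathcal{A}$ and $\mathcal{B}$ contributions is then controlled by the difference $\widehat{f}_{\mathcal{A}}(s)/s - (\log^{100}x)(x/y)\cdot (x/y)^{?}\widehat{f}_{\mathcal{B}}(s)/s$; by Lemma~\ref{mellin}, both Mellin transforms agree to leading order $2\eta_{\mathcal{C}}+\xi_{\mathcal{C}}$ (which is what produces the normalising factors $y$ and $\log^{100}x/x$ after the division), and the error in that asymptotic is $O((1+|s|)(\eta^3+\xi^3)/\xi)$, which after dividing by the main term $\asymp \eta_{\mathcal{C}}$ is $O((1+|s|)(\log^{-\epsilon}x))$ --- small provided we truncate the $s$-integral at height $|t|\le x^{1/2}\log^{-C}x$ or so and absorb the tail using the rapid decay $|\widehat{f}_{\eta,\xi}(s)/s|\ll \xi^{-1}|s|^{-2}$.

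With the main terms matched, the task reduces to showing that the contribution of the $t$-integral away from $t=0$ is $\ll (\log^{-\delta}x)\,\Omega/\log x$; after pulling out the harmless small factors (by~(\ref{smallprimes}) the polynomials $C'$, $Q_1$, $Q_2$, $\prod R_l$ and the short-interval $P_j$'s for $j\le K$ each contribute only $\log^{o(1)}x = \lesssim 1$ in $L^\infty$ on the line, while $Q_3$ and the long-interval $P_j$'s for $j>K$ carry the ``$\Omega$'' mass), the core estimate I need is of the shape
\begin{align*}
\int_{T_0}^{x^{1/2}\log^{-C}x} \bigl| N_0(1/2+it) A(1/2+it) B(1/2+it) \cdot \mathcal{P}(1/2+it)\bigr|\,dt \ \ll\ x^{1/2}\log^{-C}x \cdot (\text{density factor}),
\end{align*}
where $\mathcal{P}$ collects the remaining prime polynomials. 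The plan for this is the one sketched at the start of Section~\ref{typei}: apply the approximate functional equation (Lemma~\ref{approx}, or rather Lemma~\ref{smooth}(iv) since $N_0$ already carries $\phi_N$) to replace $N_0(1/2+it)$ by $\chi(s)\sum_{n}\phi_N(|t|/(2\pi n))n^{s-1}$, then use Lemma~\ref{smooth}(ii)--(iii) to replace the $t$-dependent cross-condition by an honest Dirichlet polynomial $M(s)=\sum_{n\le x^{1/2}\log^{-C}x / N}c_n n^{-s}$ at a cost of $\lesssim 1$; then Cauchy--Schwarz splits the integral into $(\int |B\,\mathcal{P}|^2)^{1/2}(\int|MA|^2)^{1/2}$, and Lemma~\ref{mvt1} (the mean value theorem) finishes, because the conditions $U\le x^{1/4-10\delta}$ and $V\le x^{1/2-\gamma+\delta}$ guarantee $U\cdot(x^{1/2}\log^{-C}x/N)\ll x^{1/2}\log^{-c\delta}x$ and $V\cdot(\text{length of }\mathcal{P})\ll x^{1/2}$, giving a genuine power-of-log saving $\log^{-c\delta}x$ rather than $x^{-\epsilon/2}$. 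The bookkeeping has to be done so that all the ``$\lesssim 1$'' losses genuinely stay $\log^{o(1)}x$ and the single honest power saving $\log^{-c\delta}x$ survives; choosing $\epsilon$ small enough in terms of $\delta$ (as the paper's conventions allow) is what makes the $[U^{1-\epsilon},U]$, $[V^{1-\epsilon},V]$ ranges cost only $x^{O(\epsilon)}=\log^{o(1)}x$ rather than eating the saving.

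The main obstacle, I expect, is not any single estimate but the requirement --- emphasised repeatedly in the introduction --- that \emph{no} power of $\log x$ be lost anywhere. Concretely: (i) one must use $\phi_N$ and Lemma~\ref{smooth} in place of the classical Perron step, since the classical removal of the cross-condition $n\sim|t|/(2\pi N)$ loses a full $\log x$; (ii) one cannot dyadically decompose $u,v$ (that would drop the density by a power of $\log x$), so the polynomials $A,B$ live on the fatter ranges $[U^{1-\epsilon},U]$, $[V^{1-\epsilon},V]$ and one must check the mean-value bounds still close with room to spare; (iii) the error term $|E(N,|t|)|$ in Lemma~\ref{smooth}(iv) must be shown to contribute $\lesssim 1$ after integration and after multiplication by the (small) total mass of the remaining polynomials --- this is where the truncation height $x^{1/2}\log^{-C}x$ and the $|t|^{-3/4+o(1)}$, $|t|^{-2}N^{1/2+O(\delta)}$ shapes need to be played against $N\asymp x^{1-\gamma}/(UV)$; and (iv) the matching of the $\mathcal{A}$ and $\mathcal{B}$ main terms must be carried out carefully enough that the relative error is $O(\log^{-\epsilon}x)=O(\log^{-\delta'}x)$ with $\delta'$ comparable to $\delta$, using the uniform asymptotics of $\widehat{f}_{\eta,\xi}$ from Lemma~\ref{mellin}. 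None of these steps is deep individually, but keeping every loss at the level $\log^{o(1)}x$ simultaneously is the delicate part, and is presumably what Lemma~\ref{implies} (the reduction to $\widetilde{W}$) and the smoothing device are designed to make possible.
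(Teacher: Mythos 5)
Your overall skeleton (Mellin inversion, the smooth cutoff $\phi_N$ with Lemma \ref{smooth} replacing the lossy Perron step, the approximate functional equation for the long smooth polynomial, removal of the smoothed cross-condition at cost $\lesssim 1$, then Cauchy--Schwarz plus a mean value theorem) is the same as the paper's, and your points (i)--(iv) correctly identify where log-powers could be lost. But there are two genuine gaps at exactly the place where the proof has to close. First, your length bookkeeping for the Cauchy--Schwarz split is wrong: you propose $\bigl(\int|B\mathcal{P}|^2\bigr)^{1/2}\bigl(\int|MA|^2\bigr)^{1/2}$ with $\mathcal{P}$ collecting \emph{all} the remaining polynomials, and claim $V\cdot\mathrm{len}(\mathcal{P})\ll x^{1/2}$. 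Since $V$ may be as large as $x^{1/2-\gamma+\delta}$ and the combined polynomials $c',q_1,q_2,q_3,p_1\cdots p_J,r_1\cdots r_L$ have length $x^{\gamma+o(1)}$ by (\ref{hash}), this product can be as large as $x^{1/2+\delta+o(1)}$, which exceeds both $x^{1/2}$ and the integration range $x/y$, and the mean value theorem then does not give an acceptable bound. This is precisely why the paper factors $\Omega(s)=\Omega_1(s)\Omega_2(s)$ at a carefully chosen index $J_1$ (exploiting the well-factorable structure of the intervals $I_j$) so that $F_1=BC\Omega_1$ has length $<x^{1/2-\delta+o(\delta)}$ and $MF_2=MA\Omega_2$ has length $<x^{1-\delta}/y$, using $U\le x^{1/4-10\delta}$ through (\ref{usize}); this factorization step is not optional and is absent from your argument.

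Second, the source of the final saving is misidentified. In the relevant range both mean values are dominated by the integration length $T=x/y$ (the polynomial lengths are below $x/y$), so Lemma \ref{mvt1} yields a bound of the shape $\frac{x^{1/2}}{y}\bigl(\sum_k|h_1(k)|^2k^{-1}\bigr)^{1/2}\bigl(\sum_k|h_2(k)|^2k^{-1}\bigr)^{1/2}$, and no saving comes from ``$T$ and the lengths being $x^{1/2}\log^{-C}x$'' as you assert. The proof succeeds or fails on the quantitative estimation of the coefficient mean squares of the sparse sequences: the paper shows $\sum_k|h_2(k)|^2k^{-1}\lesssim\log x$ via $w$-almost-primality and Shiu's bound, and, more delicately, $\sum_k|h_1(k)|^2k^{-1}\lesssim(\log x)(\log 1/\theta)^J(1+\log 1/\theta)^J$ by a collision analysis over subsets $\rho\subseteq\{1,\dots,J\}$ where $p_j\neq p_j'$, again with Lemma \ref{shiu}. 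Only then does the comparison with the target $(\log^{-\delta}x)\Omega/\log x=(\log^{-0.5\ldots+o(1)}x)$ go through, and it does so because the definition (\ref{y}) of $y$ contains exactly the compensating factors $2^{J/2}(1+\log 1/\theta)^{J/2}(\log 1/\theta)^{-J/2}$, giving $\mathcal{I}_2\lesssim\log^{-0.7\ldots+o(1)}x$. Your proposal treats the saving as automatic and never confronts the density of the coefficients against the specific size of $y$ and $\Omega$, so as written the argument does not establish the stated bound.
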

\begin{lemma} \label{implies} Proposition \ref{typeiestimate2} implies Proposition \ref{typeiestimate}.
\end{lemma}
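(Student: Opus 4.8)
The plan is to remove the condition $(n,P(w))=1$ in Proposition~\ref{typeiestimate} by absorbing the $w$-smooth part of $n$ into the existing $w$-smooth variable $c$ that already appears in the weight $W_{\mathcal C}$. Concretely, write a general integer $m$ appearing in the $n$-slot of $W_{\mathcal C}$ (which we then want to be coprime to $P(w)$) by factoring an arbitrary $n$ in Proposition~\ref{typeiestimate2}'s sum as $n = c\cdot n'$ with $P^+(c)\le w$ and $(n',P(w))=1$: every positive integer has such a unique factorization into its $w$-smooth and $w$-rough parts. Then $W_{\mathcal C}(uvn) = W_{\mathcal C}(u v c n')$, and the variable $c$ here plays exactly the role of the summation variable $c\in I=[1,x^{\epsilon}]$ in the definition of $W_{\mathcal C}$, while $c'$ is untouched. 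So the sum $\sum_{u,v,n}\widetilde W_{\mathcal C}(uvn)a_u b_v$ with $n$ unrestricted reproduces $\sum_{u,v,n}W_{\mathcal C}(uvn)a_u b_v$ with $(n,P(w))=1$, \emph{provided} the $w$-smooth part $c$ of $n$ lands in the range $I=[1,x^{\epsilon}]$.

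First I would make this precise: given $u\in[U^{1-\epsilon},U]$, $v\in[V^{1-\epsilon},V]$, and $n$ contributing a nonzero term to $\widetilde W_{\mathcal A}(uvn)$ (or $\widetilde W_{\mathcal B}$), the full product $nc'q_1q_2q_3p_1\cdots p_Jr_1\cdots r_L \asymp x$, so $n\asymp x/(UV \cdot c' q_1\cdots r_L)$. Since $UV\le x^{3/4}$ and the remaining variables contribute at most $x^{\gamma+o(1)}$ by~\eqref{hash} (together with $c'\le x^{\epsilon}$), we get $n \ge x^{1/4 - \gamma - o(1)}$, which is far larger than $x^{\epsilon}$; thus the $w$-smooth part $c$ of $n$ is at most $x^{\epsilon}$ only for \emph{some} values of $n$, not all. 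The point, however, is that the terms where $c>x^{\epsilon}$ — i.e. where the $w$-smooth part of $n$ is unusually large — form a genuinely negligible set, because $w = x^{1/(\log\log x)^2}$ is large, so having a $w$-smooth divisor exceeding $x^{\epsilon}$ forces $n$ to have many prime factors near $w$ and is rare by Lemma~\ref{hrdd} (or rather a divisor-analogue of it). So the strategy is: (i) split $n=cn'$ into $w$-smooth and $w$-rough parts; (ii) show that the contribution of the terms with $c>x^{\epsilon}$ is $\ll(\log^{-\delta}x)\,\Omega/\log x$ for both the $\mathcal A$ and $\mathcal B$ sums; (iii) on the remaining terms, recognize $\widetilde W_{\mathcal C}(ucn'v) $ with $c\le x^{\epsilon}$, $(n',P(w))=1$ as precisely $W_{\mathcal C}(uvn')$ with the extra smooth variable renamed, so that the difference of the two sums in Proposition~\ref{typeiestimate} equals the difference of the two sums in Proposition~\ref{typeiestimate2} up to the negligible error, and apply Proposition~\ref{typeiestimate2}.

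For step~(ii), the bound I would use is the following: the total (absolute) contribution of terms with $c>x^{\epsilon}$ is, after dropping $|a_u|,|b_v|\lesssim 1$ and crudely bounding $f_{\mathcal C}\le 1$,
\begin{align*}
\ll \frac{y}{x}\sum_{\substack{m c' q_1q_2q_3 p_1\cdots r_L \asymp x \\ m \text{ has a } w\text{-smooth divisor} > x^{\epsilon}}} \tau^{(3)}(m),
\end{align*}
where the $\tau^{(3)}$ absorbs the ambiguity in writing the $n$-slot product as $uvn'\cdot(\text{smooth part})$; one then either invokes Shiu's bound (Lemma~\ref{shiu}) on the $m$-sum with the multiplicative weight $\tau^{(3)}(m)1_{P^+(m_w)>x^{\epsilon}}$ (its $m_w$ = smooth part), or argues directly: the number of $m\asymp M$ possessing a $w$-smooth divisor in $[x^{\epsilon},2^k x^{\epsilon}]$ is $\ll M e^{-u/2}$-type small with $u \gg \epsilon(\log\log x)^2$ by Lemma~\ref{hrdd}, and summing dyadically over the size of the smooth divisor still gives a saving of $\exp(-c\,\epsilon(\log\log x)^2)$, which comfortably beats any fixed power $\log^{-\delta}x$. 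Comparing with the main term size $\Omega/\log x = \log^{1+o(1)}x$ (from~\eqref{omega}) gives the required $\ll(\log^{-\delta}x)\Omega/\log x$.

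The main obstacle is step~(ii): one has to be careful that the divisor factor introduced by not fixing the smooth part uniquely (or rather, by needing to re-sum over it) does not reintroduce a spurious power of $\log x$ — this is exactly the kind of place the paper warns about losing logs. Using the \emph{unique} $w$-smooth/$w$-rough factorization $n = cn'$ keeps the combinatorics clean (no divisor function at all on the $\mathcal C$-side, since $u,v$ are separate fixed variables and $c,n'$ are then uniquely determined by $n$), so the only genuine estimate needed is the smooth-number count with weight at worst $\tau^{(O(1))}$, and the super-polynomial saving from $w$ being a power of $x$ (not $\log^{O(1)}x$) makes the loss of a few logs from $\tau^{(O(1))}$ or from $e$-adic decompositions totally harmless. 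Everything else is bookkeeping: matching the normalizations $y$ vs. $\log^{100}x/x$ is automatic since they are applied identically to both the $\mathcal A$ and $\mathcal B$ sums.
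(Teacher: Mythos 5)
Your overall reduction is the same as the paper's: use the unique $w$-smooth/$w$-rough factorization, identify the discrepancy between the $W_{\mathcal{C}}$- and $\widetilde W_{\mathcal{C}}$-sums as exactly the terms whose $w$-smooth part exceeds $x^{\epsilon}$, and kill those terms using the rarity of $w$-smooth numbers of size $\geq x^{\epsilon/2}$ (Lemma \ref{hrdd}) together with Shiu's bound. The gap is in how you actually bound that error on the $\mathcal{A}$-side. Your displayed estimate replaces the constraint that the full product lies in $[x-2y,x+2y]$ (an interval of length $\asymp x^{1/2}\log^{1.4}x$) by the dyadic constraint $\asymp x$ at the cost of a factor $y/x$; that is not a legitimate inequality — it is precisely a short-interval equidistribution assertion for the sparse set of integers possessing a large $w$-smooth divisor, i.e. the kind of statement one must prove, not assume. (On the $\mathcal{B}$-side the weight lives on an interval of length $\asymp x\log^{-100}x$, so dropping the interval condition only costs $\log^{100}x$, which the $\exp(-c\,\epsilon(\log\log x)^2)$ saving absorbs; it is only the $\mathcal{A}$-side that needs care.) Your fallback ``direct'' argument via Lemma \ref{hrdd} counts $m\asymp M$ in long ranges and so has the same defect, and your Shiu variant is applied with the weight $\tau^{(3)}(m)\,1_{m_w>x^{\epsilon}}$, which is not multiplicative (two integers each with smooth part $\approx x^{\epsilon/2}$ fail the condition while their product satisfies it), so Lemma \ref{shiu} does not apply to it; moreover its Euler-product bound would not register the smooth-part restriction at all, so Shiu by itself cannot produce the required $\log^{-C}x$ saving.

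The repair is exactly the paper's device: since the $w$-smooth part $c>x^{\epsilon}$, it has a divisor $d\mid c$ with $x^{\epsilon/2}<d\leq x^{\epsilon}$ and $P^{+}(d)\leq w$ (a $w$-smooth number has a divisor in every window $[z,wz]$). Fix such a $d$, combine all remaining variables into a single one carrying a bounded-order divisor weight $\tau^{(\mathcal{O}(1))}$, and apply Lemma \ref{shiu} to that genuinely multiplicative weight over the short interval $[(x-2y)/d,(x+2y)/d]$, which is admissible since $d\leq x^{\epsilon}$; this yields only a harmless $\log^{\mathcal{O}(1)}x$. The decisive saving then comes from the sum over $d$: by Lemma \ref{hrdd} and partial summation, $\sum_{x^{\epsilon/2}<d\leq x^{\epsilon},\,P^{+}(d)\leq w}d^{-1}\ll\log^{-C}x$ for any fixed $C$, because $\log d/\log w\gg\epsilon(\log\log x)^2$. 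With this modification (applied to both $\mathcal{A}$ and $\mathcal{B}$) your argument coincides with the paper's proof of the lemma.
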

\begin{proof}
Consider the sum over $\A$ first. Recall that in the weight $W_\A$ we sum over $c \in [1,x^{\epsilon}]$ with $P^+(c) \leq w.$ Hence, by combining the variables $n$ and $c$ we get
\begin{align*}
\sum_{\substack{u,v,n \\ u \in [U^{1-\epsilon},U] \\ v \in [V^{1-\epsilon},V] \\ (n,P(w))=1 }} W_{\mathcal{A}}(uvn) a_ub_v =   \sum_{\substack{uvncc'q_1q_2q_3 p_1 \cdots p_J r_1 \cdots r_L \asymp x  \\ u \in [U^{1-\epsilon},U] \\ v \in [V^{1-\epsilon},V] \\ (n,P(w))=1}} a_ub_v f_{\mathcal{A}}(uvncc'q_1q_2q_3 p_1 \cdots p_J r_1 \cdots r_L/x) \\
= \sum_{\substack{u,v,n \\ u \in [U^{1-\epsilon},U] \\ v \in [V^{1-\epsilon},V] }} \widetilde{W}_{\mathcal{A}}(uvn) a_ub_v - \sum_{\substack{uvncc'q_1q_2q_3 p_1 \cdots p_J r_1 \cdots r_L \asymp x  \\ u \in [U^{1-\epsilon},U] \\ v \in [V^{1-\epsilon},V] \\ (n,P(w))=1 \\ c >x^{\epsilon}, \,P^+(c) \leq w } } \hspace{-20pt} a_ub_v f_{\mathcal{A}}(uvncc'q_1q_2q_3 p_1 \cdots p_J r_1 \cdots r_L/x)
\end{align*}
Let $\tau^{(k)}$ denote the $k$-fold divisor function (i.e. $\tau^{(k+1)}=1 \ast \tau^{(k)},$ $\tau^{(1)} \equiv 1$). Then (using the disjointness of the intervals $I_j,$ and combining the variables $u,$ $v,$ $n,$ $c',$ $q_1q_2q_3,$ $p_1 \cdots p_J,$ and $r_1 \cdots r_L$)
\begin{align}
&\left | \frac{1}{y}\sum_{\substack{u,v,n \\ u \in [U^{1-\epsilon},U], \, v \in [V^{1-\epsilon},V] }} \widetilde{W}_{\mathcal{A}}(uvn) a_ub_v - \frac{1}{y}\sum_{\substack{u,v,n \\ u \in [U^{1-\epsilon},U], \, v \in [V^{1-\epsilon},V]\\ (n,P(w))=1 }} W_{\mathcal{A}}(uvn) a_ub_v \right| \nonumber \\ \label{clarge}
&\hspace{80pt} \lesssim  \frac{1}{y}\sum_{\substack{uvncc'q_1q_2q_3p_1 \cdots p_J r_1 \cdots r_L \in [x-2y,x+2y] \\ c > x^{\epsilon}, \, P^+(c) \leq w }} 1 \, \leq    \frac{L!}{y}\sum_{\substack{cn \in [x-2y,x+2y] \\ c > x^{\epsilon} \, P^+(c) \leq w}} \tau^{(7)}(n).
\end{align}
Since $c$ in (\ref{clarge}) is $w$-smooth, it must have a divisor $d|c$ such that $x^{\epsilon/2} < d < x^{\epsilon}$ and $P^+(d) \leq w$ (since $c$ has a divisor on every interval of the form $[z,wz] \subset [1,c]$) Hence, (\ref{clarge}) is bounded by
\begin{align*}
 & \ll \, \frac{1}{y} \sum_{\substack{dcn \in [x-2y,x+2y]  \\ x^{\epsilon/2} < d < x^{\epsilon}, \, P^+(d) \leq w}} \tau^{(7)}(n) \,= \, \sum_{\substack{x^{\epsilon/2} < d < x^{\epsilon} \\ P^+(d) \leq w}} \sum_{n \in [(x-2y)/d,(x+2y)/d]} \tau^{(8)}(n) \\
& \, \ll (\log^C x) \sum_{\substack{x^{\epsilon/2} < d < x^{\epsilon} \\ P^+(d) \leq w}} d^{-1} \, \ll \, \log^{-C} x
\end{align*}
by Lemmata \ref{shiu} and  \ref{hrdd}. A similar argument yields
\begin{align*}
\left |  \frac{\log^{100}x}{x}\sum_{\substack{u,v,n \\ u \in [U^{1-\epsilon},U], \, v \in [V^{1-\epsilon},V] }} \widetilde{W}_{\mathcal{B}}(uvn) a_ub_v -  \frac{\log^{100}x}{x}\sum_{\substack{u,v,n \\ u \in [U^{1-\epsilon},U], \, v \in [V^{1-\epsilon},V]\\ (n,P(w))=1 }} W_{\mathcal{B}}(uvn) a_ub_v \right| \\ \lesssim \log^{-C} x.
\end{align*}
\end{proof}

Thus, it remains to prove Proposition \ref{typeiestimate2}.

\emph{Proof of Proposition \ref{typeiestimate2}.}
Let  $N$ be such that $N^{1+\delta/2}UV=x^{1-\gamma}$ (recall (\ref{hash})), and let $\phi_N(x)$ be as in (\ref{phi}). Then
\begin{align} \label{suma}
 \frac{1}{y}\sum_{\substack{u,v,n \\ u \in [U^{1-\epsilon},U], \, v \in [V^{1-\epsilon},V]}} \widetilde{W}_{\mathcal{A}}(uvn) a_ub_v  =  \frac{1}{y}\sum_{\substack{u,v,n  \\ u \in [U^{1-\epsilon},U], \, v \in [V^{1-\epsilon},V]}} \widetilde{W}_{\mathcal{A}}(uvn) a_ub_v  \phi_N(n),
\end{align}
if $\epsilon$ is small enough compared to $\delta.$ 

If $N\geq x^{1/2},$ then the claim is trivial, since in that case $UV \leq x^{1/2-\gamma}N^{-\delta/2}$ and (using (\ref{hash}) and the short-hand notation $\mathfrak{m}=uvc'q_1q_2q_3p_1 \cdots p_J r_1 \cdots r_L$)
\begin{align*}
 \frac{1}{y}\sum_{\substack{u,v,n \\ u \in [U^{1-\epsilon},U], \, v \in [V^{1-\epsilon},V]}} \widetilde{W}_{\mathcal{A}}(uvn) a_ub_v =  \frac{1}{y}\sum_{\substack{uvc'q_1q_2q_3p_1 \cdots p_J r_1 \cdots r_L \leq x^{1/2-\epsilon} \\ u \in [U^{1-\epsilon},U], \, v \in [V^{1-\epsilon},V]}}a_ub_v  \sum_{n} f_{\A}(n\mathfrak{m}/x),
\end{align*}
so that the smooth variable $n$ runs over an interval of length $y/\mathfrak{m} > x^{\epsilon}$ and we get an asymptotic formula. 

Hence, we may assume $N < x^{1/2}.$ If we write $S_{\mathcal{A}}$ for the quantity in (\ref{suma}), we have by Mellin inversion
\begin{align} \label{summellin}
S_{\mathcal{A}} = \frac{1}{y 2\pi i} \int_{1/2-i\infty}^{1/2+i\infty} x^{s} \hat{f}_{\mathcal{A}} (s) N(s)F(s)\, \frac{ds}{s},
\end{align}
where
\begin{align*}
N(s) &= \sum_n \phi_N(n) n^{-s}, \quad F(s)=A(s)B(s)C(s)\Omega(s),
\end{align*}
for
\begin{align*}
 A(s)&= \sum_{u \in [U^{1-\epsilon},U]} a_u u^{-s}, \quad B(s)= \sum_{v \in [V^{1-\epsilon},V]} b_v v^{-s} \quad
\\  C(s)& = \sum_{c \leq D, \, P^+(c) \leq w} c^{-s}, \quad
\Omega(s) = Q_{1}(s)Q_{2}(s)Q_{3}(s) R(s)^L \prod_{j=1}^J P_j(s),
\end{align*}
where the polynomials defining $\Omega(s)$ have the obvious definitions so that $\Omega(s)$ has length around $x^{\gamma + o(1)}$ (cf. (\ref{hash})) and $\Omega(1)C(1)^2=\Omega$ as in (\ref{omega}). Suppose then that $N < x/y$ (if $N \geq x/y,$ a similar but easier argument works). Split the integration in (\ref{summellin}) into three parts (writing $s=1/2+it$)
\begin{align*}
S_{\mathcal{A}}&= \frac{1}{y 2\pi i} \left(\int_{|t| \leq N} + \int_{N < |t| \leq x/y} + \int_{|t| > x/y} \right)x^{s} \hat{f}_{\mathcal{A}} (s) N(s)F(s)\, \frac{ds}{s} \\
& =: \mathcal{I}_1 +\mathcal{I}_2+\mathcal{I}_3,
\end{align*}
say. The main term will be recovered from the first integral, and the two other integrals will be bounded by an argument similar to that which was sketched at the beginning of this section.

\textbf{Integral $\mathcal{I}_1$:} For $|t| \leq N,$ $s=1/2+it,$  we have by Lemma \ref{smooth}
\begin{align*}
N(s)=\sum_n \phi_N(n) n^{-s} = \frac{\hat{\phi}_N(1-s)}{1-s} + \mathcal{O}(N^{-1/2 +  \mathcal{O}(\delta)}),
\end{align*}
where $\hat{\phi}_N$ is the Mellin transform of $\phi_N.$ We also have for $\sigma = 1/2$ by Lemma \ref{mellin}
\begin{align*}
x^s\hat{f}_{\mathcal{A}}(s)/s &= (2\eta_\A+\xi_\A) x^s + \mathcal{O}\left(x^{-1/2 + o(1)}(1+|s|)\right), \quad  \left|\hat{f}_{\mathcal{A}}(s)/s  \right| \ll x^{-1/2 + o(1)},
\end{align*}
where  $\eta_\A = y/x$ and $\xi_\A = (\log^{-\epsilon} x )y/x .$  Thus, $\mathcal{I}_1=  \mathcal{J}_1 +\mathcal{J}_2+\mathcal{J}_3,$ where
\begin{align*} 
\mathcal{J}_1 &=  \frac{2\eta_\A+\xi_\A}{y 2\pi i}  \int_{1/2-iN}^{1/2+iN} x^s  \frac{\hat{\phi}_N(1-s)}{1-s} F(s) \,ds, \\
\mathcal{J}_2 & \ll \frac{1}{y} N^{-1/2+ \mathcal{O}(\delta)} x^{1/2} \int_{1/2-iN}^{1/2+iN} |F(s)| |\hat{f}_{\mathcal{A}}(s)/s | |ds|, \quad \quad \text{and}\\
\mathcal{J}_3 & \ll \frac{1}{y}  \int_{1/2-iN}^{1/2+iN} |F(s)| \left|\frac{\hat{\phi}_N(1-s)}{1-s} \right| x^{-1/2+o(1)}(1+|s|) |ds|.
\end{align*}
If  we denote $F(s)= \sum_{m} c_m m^{-s},$ then by combining variables (using disjointness of the intervals $I_j$) we obtain $|c_m| \, \lesssim \tau^{(5)}(m),$ so that $\sum_{m} |c_m|^2 m^{-1} \ll \log^C x $. For $\mathcal{J}_3 $ note that by Lemma \ref{smooth} $|\hat{\phi}_N(1/2+it)| \, \ll N^{1/2+\mathcal{O}(\delta)}.$  Thus, by Cauchy-Schwarz and Lemma \ref{mvt1} 
\begin{align*}
| \mathcal{J}_2 |& \ll y^{-1}N^{-1/2+  \mathcal{O}(\delta)} x^{1/2}  \int_{1/2-iN}^{1/2+iN} |F(s)| |\hat{f}_{\mathcal{A}}(s)/s | |ds| \\ 
& \ll y^{-1}x^{ \mathcal{O}(\delta)}  \left(\int_{1/2-iN}^{1/2+iN} |F(s)|^2|ds| \right)^{1/2} 
\ll y^{-1} x^{ \mathcal{O}(\delta)}(N + x^{1+ \mathcal{O}(\delta)}/N)^{1/2} \ll x^{-\epsilon}
\end{align*}
and
\begin{align*}
| \mathcal{J}_3 |& \ll y^{-1} \int_{1/2-iN}^{1/2+iN} |F(s)| \left|\frac{\hat{\phi}_N(1-s)}{1-s} \right| x^{-1/2+o(1)}(1+|s|) |ds| \\
& \ll x^{-1+o(1)} N^{1/2 +\mathcal{O}(\delta) } \int_{1/2-iN}^{1/2+iN} |F(s)| |ds|  \, \ll x^{-1+\mathcal{O}(\delta)} N \left(\int_{1/2-iN}^{1/2+iN} |F(s)|^2|ds| \right)^{1/2} \\
& \ll  x^{-1+\mathcal{O}(\delta)} N (N + x^{1+ \mathcal{O}(\delta)}/N)^{1/2} \ll x^{-\epsilon},
\end{align*}
since by the assumptions on $U$ and $V$ we have $x^{1/4} \leq x^{1/2-2\delta}/U \leq N \leq x^{1/2}.$ For the main term we obtain by the change of variables $s \mapsto 1-s$ and Mellin inversion applied to $\phi_N$
\begin{align}
\mathcal{J}_1 &=  (2+\log^{-\epsilon} x) \frac{1}{2\pi i} \int_{1/2-iN}^{1/2+iN} x^{-s}  \hat{\phi}_N(s) F(1-s) \,\frac{ds}{s} \nonumber \\ \label{mainterm}
&=(2+\log^{-\epsilon} x) \sum_{\substack{u,v,c',q_1,q_2,q_3, \\ p_1, \dots p_J, r_1,\dots,r_L \\ u \in (U^{1-\epsilon}, U], \, \, v \in (V^{1-\epsilon},V]}} \frac{a_ub_v \phi_N(x/(uvc'q_1q_2q_3 p_1 \cdots p_J r_1 \cdots r_L))}{uvc'q_1q_2q_3 p_1 \cdots p_J r_1 \cdots r_L} + \mathcal{O}(E),
\end{align}
where the error term is
\begin{align*}
E = x^{-1/2} \int_{1/2+iN}^{1/2 + i\infty} |F(1-s)| |\hat{\phi}_N(s)|  \,\frac{|ds|}{|s|}. 
\end{align*}
The first term  in (\ref{mainterm}) can be evaluated asymptotically as the sum over $\mathcal{B}$ in the proposition, since $\phi_N \equiv 1$ in the sum, and $ \int f_\B (z) \, dz = (2+\log^{-\epsilon} x)\log^{-100}x .$  By Lemma \ref{smooth} we have $|\hat{\phi}_N(s)| \ll N^{\sigma + \mathcal{O}(\delta)} |t|^{-1}.$ We also have the trivial bound $|F(1/2 +it)| \ll (x/N)^{1/2 +\mathcal{O}(\delta)}.$ Thus,
\begin{align*}
E \ll x^{ \mathcal{O}(\delta)} \int_N^{\infty} \frac{dt}{t^2} \ll x^{\mathcal{O}(\delta)}/N \ll x^{-\epsilon},
\end{align*}
so that  from the integral $\mathcal{I}_1$ we obtain the main term with sufficient bounds for the error terms.

\textbf{Integral $\mathcal{I}_2$:}
We have $ |\hat{f}_{\mathcal{A}}(s)/s  | \ll y/x$ by Lemma \ref{mellin}. Thus,
\begin{align*}
\mathcal{I}_2 \ll x^{-1/2}  \int_{1/2+iN}^{1/2 + ix/y} |N(s)F(s)| |ds|.
\end{align*}
 Lemma \ref{smooth} yields 
\begin{align*}
N(s) = \chi(s)N_t(1-s) +  E(N,|t|),
\end{align*}
for 
\begin{align*}
N_t(1-s) &:= \sum_{n} \phi_N\left(\frac{|t|}{2\pi n}  \right)n^{s-1}, \quad  \quad \quad |\chi(1/2+it)|=1, \quad \quad \text{and}\\  
|E(N,|t|)| \, &\ll N^{-1/2 + \mathcal{O}(\delta)}1_{|t| \leq N^{1+\delta}}+ |t|^{-2}N^{1/2+\mathcal{O}(\delta)} +|t|^{-3/4  + o(1)}.
\end{align*}
We have
\begin{align*}
x^{-1/2}& \int_{1/2+iN}^{1/2 + ix/y} |F(s)E(N,|t|)| |ds| \\ & \hspace{-20pt}\ll \, x^{-1/2} N^{-1/2+\mathcal{O}(\delta)} \int_{1/2+iN}^{1/2 + iN^{1+\delta}} \hspace{-8pt}|F(s)| |ds|  + x^{-1/2}N^{1/2+\mathcal{O}(\delta)}\int_{1/2+iN}^{1/2 + ix/y} \frac{|F(s)| |ds|}{t^2}  \\&\hspace{150pt}+ x^{-1/2} \int_{1/2+iN}^{1/2 + ix/y} \frac{|F(s)| |ds| }{t^{3/4+o(1)}},  \\
& \ll x^{-\epsilon},
\end{align*}
where the last bound follows from applying Cauchy-Schwarz and Lemma 3. Thus,
\begin{align} \label{int2}
\mathcal{I}_2 \ll x^{-1/2}  \int_{1/2+iN}^{1/2 + ix/y} |N_t(s)||F(s)| |ds|\,  +\, x^{-\epsilon},
\end{align}
We now remove the cross-condition between $n$ and $t$: by Mellin inversion, we have 
\begin{align*}
\phi_N\left(\frac{|t|}{2\pi n}  \right )= \frac{1}{2\pi i} \int_{-\infty}^{\infty} \hat{\phi}_N(iz) (2\pi)^{iz} |t|^{-iz} n^{iz} \frac{dz}{z}.
\end{align*}
Hence, by the second part of Lemma \ref{smooth} the integral in (\ref{int2}) bounded by
\begin{align*} 
\int_{-\infty}^{\infty} | \hat{\phi}_N(iz)| \int_{1/2+iN}^{1/2 + ix/y} |M_z(s)||F(s)| |ds|\,  \frac{dz}{z} \lesssim \sup_{z \in \R} \int_{1/2+iN}^{1/2 + ix/y} |M_z(s)||F(s)||ds|,
\end{align*}
where
\begin{align*}
M_z(s) = \sum_{n \leq xN^{-1}y^{-1}} n^{iz} n^{-s}.
\end{align*}
Fix a $z$ such that the integral 
\begin{align*}
\int_{1/2+iN}^{1/2 + ix/y} |M_z(s)||F(s)||ds|
\end{align*}
is at least half of the supremum over all $z$, and write $M_z(s)=M(s).$ 

Recall now that $V\leq x^{1/2-\gamma+ \delta},$ $U \leq x^{1/4-10\delta},$ and by the definition of $N$
\begin{align} \label{usize}
U < \frac{x^{1-\gamma-10\delta}}{VU} = N^{1+\delta/2} x^{-10\delta}.
\end{align}
We now factor $\Omega(s)$ into a product $\Omega_1(s) \Omega_2(s)$ suitably: recall the definition of the intervals $I_j,$ and let $J_1 \leq J$ be the largest index such that $\omega^{r^{-1} + \cdots + r^{-J_1}} \leq x^{\gamma-2\delta},$ so that $\omega^{r^{-1} + \cdots + r^{-J_1-1}} > x^{\gamma-2\delta}.$ Then by (\ref{hash}) we have (since $r<2$ and $J_1 \ll_{\delta} 1$) 
\begin{align*}
\omega^{r^{-J_1-1}+ \cdots + r^{-J}} < \omega^{r^{-J_1-1}(1-\delta)/(r-1)}  \omega^{r^{-J_1-2}+\cdots +r^{-J}}  < \left( \omega^{r^{-J_1-2}+\cdots +r^{-J}}\right)^2 <  x^{6 \delta}.
\end{align*}
 Therefore, if $K$ large enough in terms of $\delta$, we have
 \begin{align*}
\omega^{r^{-1} + \cdots + r^{-J_1}}  \omega^{r^{-K-1} + \cdots + r^{-J}} \leq x^{\gamma-2\delta + o(\delta)}, \quad \quad \omega^{r^{-J_1-1} + \cdots + r^{-K}} < x^{6\delta}.
 \end{align*}
Hence, if we define
\begin{align*}
\Omega_1(s) &:= Q_1(s)Q_2(s)Q_3(s)R(s)^L \prod_{j=1}^{J_1} P_j(s)\prod_{j=K+1}^{J} P_j(s), \quad \quad \Omega_2(s) := \prod_{j=J_1+1}^{K} P_j(s) \\
 F_1(s) &:= B(s)C(s)\Omega_1(s), \quad \quad F_2(s):= A(s)\Omega_2(s),
\end{align*} 
then the length of $F_1(s)$ is less than $Vx^{o(1)} x^{\gamma-2\delta + o(\delta)} < x/y,$ and  the length of $F_2(s)$ is less than $Ux^{6\delta} < Nx^{-\delta}$ by (\ref{usize}), so that the length of $M(s)F_2(s)$ is  less than $x/y.$ Thus, $\mathcal{I}_2 \lesssim E  + x^{-\epsilon},$ where 
\begin{align*}
E &:=  x^{-1/2}\int_{1/2+iN}^{1/2 + ix/y} |M(s)F_{1}(s)F_2(s)| |ds|
\end{align*}
Applying Cauchy-Schwarz  and Lemma \ref{mvt1} we obtain
\begin{align*}
E \leq x^{-1/2} \left(\int_{1/2+iN}^{1/2 + ix/y} |F_{1}(s)|^2 |ds|\right)^{1/2} \left(\int_{1/2+iN}^{1/2 + ix/y} |M(s)F_2(s)|^2 |ds|\right)^{1/2} \\
\ll \frac{x^{1/2}}{y} \left(\sum_k |h_{1}(k)|^2k^{-1}\right)^{1/2}\left(\sum_k |h_2(k)|^2k^{-1}\right)^{1/2},
\end{align*}
where
\begin{align*}
h_{1}(k) = \sum_{k=vcq_1q_2q_3p_1\cdots p_Jr_1\cdots r_L} b_v , \quad \quad \quad h_2(n) =  \sum_{k=un}a_u n^{iz}.
\end{align*}
To simplify the notations, we have in the above written all of the primes coming from $\Omega(s)$ into the first term (if $\Omega(s) \neq \Omega_1(s),$ a similar argument as below works).  Then, since $(u,P(w))=1$ always, we have $|h_2(k)| \lesssim \tau_w(k).$ Thus, by Lemma \ref{shiu}
\begin{align*}
\sum_k |h_2(k)|^2k^{-1} \lesssim (\log x)\prod_{w < p \leq x}\left(1+\frac{\tau_w(p)^2-1}{p} \right) \, \lesssim \, \log x.
\end{align*}

For the sum over $h_{1},$  recall that the intervals $I_j$ are disjoint. On average an integer $n$ has $\log(1/\theta)$ prime factors from any given interval $I_j,$ so that the collisions between a smooth variable  and the primes $p_j$ are expected to contribute a factor  $(1+\log 1/\theta)^J.$ This is now made rigorous: $c$ is $w$-smooth, and $(v,P(w))=1$, so that by combining the variables $vc=n$ we get
\begin{align*}
|h_{1}(k)| \, \lesssim \sum_{k=nq_1q_2q_3p_1\cdots p_Jr_1\cdots r_L}  1.
\end{align*}
 Thus, for any $M < x^{1/2},$   we have to give a bound for (combining $n$ with $q_i,r_l$)
\begin{align} \label{colli}
\sum_{k\sim M} \left(\sum_{k= nq_1q_2q_3p_1\cdots p_Jr_1\cdots r_L} 1 \right)^2 \lesssim \sum_{np_1\cdots p_J \sim M} \sum_{np_1\cdots p_J= n'p'_1\cdots p'_J} g(n)g(n')   ,
\end{align}
where 
\begin{align*}
g(n):= 1+\sum_{n=mq_1q_2q_3r_1\cdots r_L} 1.
\end{align*}
 In \ref{colli} we have  $g(n') \lesssim g(n),$ since $g(n)-1$ counts the number of factors $q_1q_2q_3r_1\cdots r_L|n,$ and there exists only a bounded number of indices $j$ such that $p_j$ can be in the same range as one of the primes $q_1,q_2,q_3,r_l.$ Recall that the intervals $I_j$ are disjoint. We split the sum (\ref{colli}) into a sum over subsets $\rho \subseteq \{1,2,\dots,J\},$ where $\rho$ is the set of indices for which $p_j \neq p'_j.$ Note that $p_j \neq p'_j$ implies that $p'_j|n.$  Thus, (\ref{colli}) is bounded by
\begin{align} \nonumber
  \sum_{p_1,p_2,\dots, p_J } \sum_{\rho \subseteq \{1,2,\dots,J\}} \sum_{\substack{p'_j, \, j \in \rho \\ p'_j \neq p_j}}  \, &\sum_{\substack{n\sim M/(p_1\cdots p_J), \\ p'_j | n \, \forall \, j \in \rho}}  g(n)^2   \, \\ \label{colli2} & \lesssim  \sum_{p_1,p_2,\dots, p_J } \sum_{\rho \subseteq \{1,2,\dots,J\}}\,\sum_{\substack{p'_j, \, j \in \rho \\ p'_j \neq p_j}}  \sum_{\substack{n\sim M/(p_1\cdots p_J\prod_{j \in \rho}p'_j)}} g(n)^2,
\end{align}
since $g(n) \lesssim g(n/(\prod_{j \in \rho}p'_j)) .$ We have $g(n) \leq L! \tilde{g}(n),$ where $\tilde{g}(n)$ is the multiplicative function defined by
\begin{align*}
\tilde{g}(p^k) :=  \begin{cases}  (k+1), & p \sim \log^{\epsilon} x, \, p \sim Q_1,   \, \,  p \sim Q_2, \,\, \text{or} \,\, \,  Q_3^{1/2} \leq p \leq Q_3 \\
 1, &\text{otherwise},  \end{cases}.
\end{align*}
 Thus, by Lemma \ref{shiu} the sum (\ref{colli2}) is  bounded by
\begin{align*}
 & \sum_{p_1,p_2,\dots, p_J } \sum_{\rho \subseteq \{1,2,\dots,J\}} \sum_{p'_j, \, j \in \rho} \, \sum_{\substack{n\sim M/(p_1\cdots p_J\prod_{j \in \rho}p'_j)}} \tilde{g}(n)^2  \\ & \hspace{20pt}\lesssim M \prod_{p \leq x} \left( 1+\frac{\tilde{g}(p)^2-1}{p}\right)  \sum_{p_1,p_2,\dots, p_J } \sum_{\rho \subseteq \{1,2,\dots,J\}} \sum_{p'_j, \, j \in \rho} (p_1\cdots p_J)^{-1} \left( \prod_{p'_j, \, j \in \rho}  p'_j\right)^{-1} \\
& \hspace{20pt}\lesssim \, M \prod_{j=1}^J \left( \left( \sum_{p \in I_j}p^{-1}\right) \left(1+ \sum_{q \in I_j} q^{-1}\right)\right) \, \lesssim \, M (\log 1/\theta)^J(1+ \log 1/\theta)^J.
\end{align*}
Thus, the sum over $|h_{1}(k)|^2k^{-1}$ is bounded by $(\log x)(\log 1/\theta)^J(1+ \log 1/\theta)^J.$ 

 Combining the two above estimates we obtain
\begin{align*}
\mathcal{I}_2 \lesssim \frac{x^{1/2}}{y} (\log x) (\log 1/\theta)^{J/2}(1+ \log 1/\theta)^{J/2} = \log^{-0.7... +o(1)} x,
\end{align*}
while
\begin{align*}
\frac{\Omega}{\log x} = (\log^{1+o(1)} x)(\log 1/\theta)^J = \log^{-0.5...+o(1)} x,
\end{align*}
so that the estimate is sufficient for the proposition.

\textbf{Integral $\mathcal{I}_3$:} We still have to estimate the integral over $|t| \geq x/y.$ By Lemma \ref{mellin} we have the bound
\begin{align*}
| \hat{f}_{\mathcal{A}}(s)/s| \ll \xi^{-1} |s|^{-1}|1+s|^{-1} \, \lesssim x/y |s|^{-1}|1+s|^{-1}.
\end{align*}
Hence, by  a dyadic decomposition of the integral we obtain
\begin{align*}
\mathcal{I}_3 &\lesssim \frac{x^{1/2}}{y} \cdot \frac{x}{y} \int_{x/y}^{\infty} |N(1/2+it)||F(1/2+it)|\frac{dt}{t^2} \\
&\ll x^{-1/2} \cdot \frac{x}{y} \max_{T > x/y} \frac{1}{T} \int_T^{2T} |N(1/2+it)||F(1/2+it)| \,dt.
\end{align*}
Now a similar argument as for the integral over $\mathcal{I}_2$ gives the bound.
\qed

\subsection{Discussion of Type I estimates}
We note that in the above proof of Proposition \ref{typeiestimate2} we needed to factorize the polynomial $\Omega(s)$ only with an accuracy of $x^{\epsilon}.$ This can be done using just finitely many primes $p_j.$ Then we would be spared of the losses coming from the density. This means that the Type I/II estimate can be made to work even for the much shorter intervals, at least of type $[x,x+x^{1/2} (\log \log x)^B]$ for some constant $B$. We will need a stronger factorization property for the Type II sums below, which means that we require the number of primes $J \gg \log \log x,$ causing a loss of some power of $\log x.$ 

Unfortunately in our case we cannot make use of the more advanced mean value estimates such as Watt's Theorem or the Deshouillers-Iwaniec Theorem (cf. Lemma 3 of \cite{JL} for example). The reason for this is the $T^{o(1)}$ term in these estimates. For longer intervals $y=x^{1/2+\epsilon},$ we have that the critical range in the mean values is $T= x/y = x^{1/2-\epsilon},$ so that the $x^{-\epsilon}$ is sufficient to cancel $T^{o(1)}.$ A possible way one might try to implement these estimate is to try to `boost' these estimates by dividing the integration into $\mathcal{T} \cup \mathcal{U},$ where $\mathcal{T}$ is a range where some polynomial is small and $\mathcal{U}$ is the complement, and then use eg. Watt's Theorem only for the integral over  $\mathcal{U}.$ See for example Proposition 8 of Ter\"av\"ainen \cite{Ter} for such an argument. The difference compared to our case is that we do not have a square mean value of a Dirichlet polynomial to begin with, so that the same argument is not applicable.

In Chapter 5 of Harman's book \cite{Har}, one has also the so-called `two dimensional' Type I$_2$ estimate. In our case this corresponds to a sum of the form
\begin{align*}
\sum_{\substack{v,n,m \\(nm,P(w))=1}} b_v W_\A(vnm).
\end{align*}
It appears that a similar argument as with the Type I/II estimate works here also (at least for $v \leq x^{1/2-\gamma-\delta}$); by symmetry we may assume that $n > m,$ and then apply the argument with $u=m,$ $a_u= 1_{(u,P(w))=1}.$ Some complications occur when we want use this result (combined with the Type II estimate of the next section) to obtain a two dimensional version of Proposition \ref{ft} (cf. Section \ref{funprop} below). However, these problems are probably not too severe, requiring only some extra care.  In any case, we expect that the improvement in the value of $\gamma$ that this additional estimate would give is not very large (cf. Sum $S_8(\CC)$ in Section \ref{buchs}). We do not pursue this issue further here, since the most important aspect for us is the length of the interval $y$, and having the  Type I$_2$ estimate does not affect this.

\section{Type II estimate} \label{typeii}
Define $\tau_{w}(d) := (1_{(\cdot, P(w))=1} \ast 1)(d).$ Our goal in this section is to obtain 
\begin{prop} \emph{\textbf{(Type II estimate).}} \label{typeiiestimate}Let $a_u, b_v$ be complex coefficients supported for $u,v \in [x^{1/2-\gamma+\delta/2},x^{1/2-\delta/2}]$, such that $|a_u| \lesssim \tau_{w}(u)1_{(u,P(w))=1},$ $|b_v|   \lesssim \tau_{w}(v)1_{(v,P(w))=1}.$   Then
\begin{align*}
\left | \frac{1}{y} \sum_{u,v} W_{\mathcal{A}}(uv) a_ub_v  -  \frac{\log^{100} x}{x}\sum_{u,v} W_{\mathcal{B}}(uv)a_ub_v \right | \ll (\log^{-\delta} x ) \,\frac{\Omega}{\log x}
\end{align*}
\end{prop}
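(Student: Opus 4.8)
The plan is to apply Perron's formula (Mellin inversion against $f_\A$ and $f_\B$) to the two sums and reduce their difference to a mean value of Dirichlet polynomials. Writing
\[
F(s)=A(s)B(s)C(s)^2Q_1(s)Q_2(s)Q_3(s)R(s)^L\prod_{j=1}^J P_j(s)
\]
for the Dirichlet series attached to $a_u,b_v$ and the indicators of $c,c',q_i,p_j,r_l$ (so that, up to the $a_u,b_v$ factors, $F(1)=\Omega$ as in \eqref{omega}), Mellin inversion gives
\[
\frac1y\sum_{u,v}W_\A(uv)a_ub_v=\frac1{2\pi i y}\int_{(1/2)}x^{s}\frac{\hat f_\A(s)}{s}F(s)\,ds,
\]
and likewise with $\log^{100}x/x$ and $f_\B$ for the other sum. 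I would split the contour at $|t|=T_0:=\log^{C}x$. On $|t|\le T_0$ the asymptotics of Lemma \ref{mellin} give $\hat f_\A(s)/s=(2\eta_\A+\xi_\A)+O((1+|s|)(\eta_\A^3+\xi_\A^3)/\xi_\A)$ and similarly for $\B$; the leading coefficients are chosen precisely so that
\[
\tfrac1y(2\eta_\A+\xi_\A)=\tfrac{\log^{100}x}{x}(2\eta_\B+\xi_\B)=\tfrac{2+\log^{-\epsilon}x}{x},
\]
so that in the \emph{difference} of the two sums these leading contributions cancel identically, while the $O(\cdot)$-terms are bounded by pulling out $x^{1/2}$ and estimating $\int_{|t|\le T_0}(1+|t|)|F(1/2+it)|\,dt$ via Cauchy--Schwarz and Lemma \ref{mvt1} (having first localised the product to $\asymp x$); this is $O(x^{-1/4})$, negligible against $\Omega/\log x=\log^{1+o(1)}x\,(\log1/\theta)^J$.

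It then remains to handle $|t|>T_0$. Using $|\hat f_\A(s)/s|\ll\eta_\A+\xi_\A\ll y/x$ on $T_0<|t|\le x/y$ and $|\hat f_\A(s)/s|\ll\xi_\A^{-1}|s|^{-1}|1+s|^{-1}$ beyond (Lemma \ref{mellin}), together with a dyadic splitting of the far tail (where the trivial bound for $|F|$ suffices), everything is controlled once
\[
x^{-1/2}\int_{T_0}^{x/y}|F(1/2+it)|\,dt\ \ll\ (\log^{-\delta}x)\,\frac{\Omega}{\log x};
\]
the $\B$-tail is entirely analogous (and smaller). To estimate this mean value I would factor $F=\Phi_1\Phi_2$ as a bilinear product, with $\Phi_1$ carrying $A$ and $\Phi_2$ carrying $B$, distributing the prime factors using the smooth variables $c,c'$, the primes $p_j$, and the primes $r_l\sim\log^\epsilon x$. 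This uses the well-factorability built into the intervals $I_j$: since they are geometric of ratio $r$ with $\theta=r-1+\epsilon$, the product $q_1q_2q_3p_1\cdots p_Jr_1\cdots r_L$ has a divisor on essentially every scale, so for every admissible dyadic pair $U,V$ (with $UV\asymp x^{1-\gamma+o(1)}$) one can arrange that $\Phi_1$ and $\Phi_2$ are each a Dirichlet polynomial of length $\asymp\sqrt x/\log^{L\epsilon/2}x$, the $r_l$ giving the fine balancing to accuracy $\log^\epsilon x$ and $L$ being taken large enough for this to be possible. Cauchy--Schwarz then bounds the integral by $\bigl(\int_{T_0}^{x/y}|\Phi_1(1/2+it)|^2\,dt\bigr)^{1/2}\bigl(\int_{T_0}^{x/y}|\Phi_2(1/2+it)|^2\,dt\bigr)^{1/2}$ after extracting the appropriate power of $x$.

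The heart of the matter --- and the step that fixes the length $y$ --- is estimating these two mean values without conceding powers of $\log x$. Applying only the Mean value theorem (Lemma \ref{mvt1}) is too wasteful, since the product of primes $q_1q_2q_3p_1\cdots p_J$ forms a set of density only a fixed power of $\log x$, and on the subcritical range $|t|\le x/y$ the Mean value theorem overestimates such a sparse polynomial by a power of $\log x$. To recover, I would run the iterative argument of Matom\"aki and Radziwi\l\l: using the long, factorable prime variable $q_3\in(Q_3^{1/2},Q_3]$ (this is the reason its range is taken long and non-dyadic), the large-values estimate for prime polynomials (Lemma \ref{large}) and the Hal\'asz--Montgomery inequality (Lemma \ref{halasz}), one bounds the number of $t$ at which a given constituent polynomial is large and removes the surplus on a sparse exceptional set, the Improved mean value theorem (Lemma \ref{mvt2}) then handling the remaining sparse factors. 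This yields a saving of a power of $\log x$ over the trivial bound, sufficient exactly when $y$ is at least of the size in \eqref{y}; making this quantitative is the content of Lemma \ref{trivial} / line \eqref{length}, whose optimisation over the split produces the function of $r$ minimised in Theorem \ref{main}.

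Throughout, all auxiliary losses are of size $\log^{o(1)}x$ and are swallowed by the relation $\lesssim$: the use of $e$-adic rather than exact prime intervals (cf. \eqref{smallprimes}), the Shiu-type bounds (Lemma \ref{shiu}) controlling the moments $\sum|a_u|^2/u$, $\sum|b_v|^2/v$, and the collisions between a smooth variable and the primes $p_j$, which contribute a factor $(1+\log1/\theta)^J$ exactly as in the Type I/II estimate. Tracking the genuine powers --- $x^{1/2}/y=\log^{-\beta-10\delta+o(1)}x$, together with $(\log1/\theta)^J$ and the factors $2^J$ and $(1+\log1/\theta)^J$ coming from the prime products and the partition into subsets $\rho\subseteq\{1,\dots,J\}$ --- against the target $\Omega/\log x=\log^{1+o(1)}x\,(\log1/\theta)^J$ closes the estimate. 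I expect the main obstacle to be precisely this last point: making the Matom\"aki--Radziwi\l\l\ iteration deliver its saving uniformly over the non-dyadic (and hence computationally awkward) ranges $I_j$, while not losing a single stray power of $\log x$ at any step.
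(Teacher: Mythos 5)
Your outline follows the same route as the paper (Mellin inversion, bilinear splitting of the prime product, Cauchy--Schwarz, then a Matom\"aki--Radziwi\l\l{} type treatment of the resulting mean values), but two of its load-bearing steps are asserted rather than supplied, and one of them is misattributed. First, the factorization step: you say that for each admissible pair $U,V$ one can arrange $\Phi_1,\Phi_2$ of length $\asymp \sqrt{x}/\log^{L\epsilon/2}x$, but which primes $p_j$ go to which factor depends on the primes themselves, not just on the sizes of $u,v$, so the sum must be decomposed over all $2^J$ partitions $\pi\sqcup\tau$ of $\{1,\dots,J\}$. The crux --- and the paper's main combinatorial device, the partition algorithm of Lemma \ref{pa} --- is that the condition selecting a given partition must (a) select a \emph{unique} partition for each configuration and (b) split into a condition on the $\pi$-side data alone (the interval $D((p_j)_{j\in\pi})$) and one on the $\tau$-side data alone, since otherwise either cross-conditions prevent the two mean squares from being regrouped after Cauchy--Schwarz to reconstitute the full product set of density $\Omega$ (this regrouping is exactly how \eqref{length} is closed, and is where the $2^{J/2}$ in the definition \eqref{y} of $y$ arises), or configurations get counted with uncontrolled multiplicity. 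Your sketch does not provide this decoupling, and the balancing is not just ``take $L$ large'': one must choose, per partition, how many factors $R(s)^{L(\pi,\tau)}$ go to each side so that the two diagonal/off-diagonal contributions of Lemma \ref{mvt2} match (this is the role of $L(\pi,\tau)$ in Lemma \ref{trivial}). Incidentally, the $2^J$ you track comes from Cauchy--Schwarz over the partitions $\pi\sqcup\tau$, not from the collision subsets $\rho$, which are what produce the $(1+\log 1/\theta)^J$ factor.

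Second, and more seriously, your plan never exhibits the source of the actual $\log^{-\delta}x$ saving, which is the whole content of the proposition beyond the correct-order bound. Hal\'asz--Montgomery on a sparse exceptional set only returns the diagonal term $N\sum|a_n|^2$, i.e.\ a correct-order estimate, so ``large values plus Hal\'asz--Montgomery plus improved mean value theorem'' by itself yields $\Omega/\log x$, not $(\log^{-\delta}x)\,\Omega/\log x$. In the paper the saving comes from splitting $[T_0,x/y]$ according to whether $|Q_1(1+it)|$ or $|Q_2(1+it)|$ is small (those smallness factors, fed through the modified cases (i) and (ii) of Lemma \ref{trivial}, give the gain on $\mathcal{T}_1,\mathcal{T}_2$), while on the remaining set $\mathcal{U}$ the large-values Lemma \ref{large} is applied to $Q_2(s)$ (whose length $\exp((\log\log x)^{3/2})$ is what makes that lemma effective) only to show $|\mathcal{T}|\ll (x/y)^{1/2-\epsilon}$, and the saving there comes from the Vinogradov zero-free region (Lemma \ref{vino}) applied pointwise to the long $q_3$-polynomial $\sum_{q\sim e^{\alpha}}q^{-1-it}$ --- that, not a large-values argument, is why $Q_3=\exp(2\lfloor\log^{9/10}x\rfloor)$ is taken so large, with Lemma \ref{trivial2} supplying the correct-order input to Hal\'asz--Montgomery. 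Your proposal applies Lemma \ref{large} to $q_3$ and omits Lemma \ref{vino} and the roles of $Q_1,Q_2$ altogether, so as written the argument on the exceptional set would not produce any saving; relatedly, the constraint on $y$ comes from the correct-order estimate (Lemma \ref{trivial}, line \eqref{length}), not from the size of the Matom\"aki--Radziwi\l\l{} saving, which is an independent $\log^{-\delta}x$ on top of it.
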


By symmetry and (\ref{hash}), we may assume that $a_u$ is supported for $u \leq x^{1/2-\gamma/2+\delta}.$ We will assume this systematically throughout this section.

For the proof we require the following technical lemma, which will be used to obtain a suitable factorization for Dirichlet polynomials. To motivate it, recall from Section 1 the argument described for the Type II sums; to obtain the last the bound (\ref{typebound}) we chose $L \leq K-1$ in such a way that $UQ^L$ and $VQ^{K-L-l}$ are roughly of the same size. In our situation we have primes $p_j$ in various different ranges $I_j$, which means that we need to consider all possible factorizations
\begin{align*}
p_1p_2\cdots p_J = \left(\prod_{j \in \pi} p_j \right) \left( \prod_{j \in \tau} p_j\right),
\end{align*}
 where $\pi \sqcup \tau = \{1,2,\dots,J\}$ is a partition of the indices.  In the lemma below the quantities $e^g,e^{g'}$ correspond to $e$-adic ranges so that $uc \sim e^g,$ and $vc' \sim e^{g'}.$ We also will divide the range for $q_3$ into $e$-adic parts of the form $q_3 \sim e^{\alpha}$ for  $\lfloor \log^{9/10} x \rfloor  \leq \alpha < 2 \lfloor \log^{9/10} x \rfloor.$  Giving the lemma below  in terms of the $e$-adic ranges gives us great flexibility which simplifies the calculations.  For any subset $\rho \subseteq \{1,2,\dots,J\},$ we denote by $(p_j)_{j\in \rho}$ the tuple of primes $p_j$ such that $j \in \rho.$ Recall that for $\theta := r-1+\epsilon$
\begin{align*}
p_{j} \in I_j  = \begin{cases} (\omega^{(1-2\epsilon) r^{-j}}, \omega^{(1-\epsilon)r^{-j}}], \, &j=1,2, \dots, K \\
  (\omega^{\theta r^{-j}}, \omega^{(1-\epsilon)r^{-j}}], \, & j= K+1, \dots, J. \end{cases}
\end{align*} Recall also that $r_1 \cdots r_L \asymp \log^{L\epsilon} x.$ 
\begin{lemma} \emph{\textbf{(Partition algorithm).}} \label{pa} Let $z:=x/\log^{L\epsilon} x,$ and let $\alpha,g,g'$ denote integers such that  $\lfloor \log^{9/10} x \rfloor  \leq \alpha < 2 \lfloor \log^{9/10} x \rfloor$, $e^g \in [x^{1/2-\gamma+\delta/4},x^{1/2-\gamma/2+2\delta}],$ and $e^{g'} \in [x^{1/2-\gamma+\delta/4},x^{1/2-\delta/4}].$ If the primes $p_j \in I_j$ are such that
\begin{align*}
e^{g+g'} Q_1 Q_2 e^{\alpha} p_1\cdots p_J \asymp z,
\end{align*}
then there exists a partition of the indices $\pi \sqcup \tau= \{1,2,\dots,J\}$ so that
\begin{align} \label{partition}
e^{g+\alpha} Q_1Q_2\prod_{j \in \pi} p_j \asymp z^{1/2}, \quad \quad e^{g'}\prod_{j \in \tau} p_j \asymp z^{1/2}.
\end{align}

Furthermore, there is an algorithm (which we call the partition algorithm)
\begin{align*}
 (g,g',\alpha,p_1,\dots,p_J) \mapsto \pi \sqcup \tau= \{1,2,\dots,J\}
\end{align*}
 such that the following holds:

For any $g,g',\alpha,p_1,\dots,p_J$ as above and any partition $\pi \sqcup \tau= \{1,2,\dots,J\},$ there are intervals $D((p_j)_{j\in \pi}),$ $D((p_j)_{j\in \tau})$ such that if  
\begin{align*}
e^{g+g'} Q_1 Q_2 e^{\alpha} p_1\cdots p_J \asymp z,
\end{align*}
 then
\begin{align} \label{partition2}
 \quad e^{g+\alpha} Q_1Q_2 \in D((p_j)_{j\in \pi}) \quad \text{and} \quad e^{g'} \in D((p_j)_{j\in \tau})
\end{align}
holds if and only if the partition algorithm produces the corresponding partition $\pi \sqcup \tau= \{1,2,\dots,J\}.$ The intervals $D((p_j)_{j\in \pi}),$ $D((p_j)_{j\in \tau})$ are always contained in some $C$-adic intervals around $z^{1/2} /(\prod_{j \in \pi} p_j) $, $z^{1/2} /(\prod_{j \in \tau} p_j) ,$ respectively, so that \emph{(\ref{partition2})} implies  \emph{(\ref{partition})}.
\end{lemma}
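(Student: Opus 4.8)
The plan is to prove the existence of the partition by a greedy/balancing argument, and then upgrade this into a deterministic algorithm whose output is characterized by membership of the $e$-adic quantities in explicitly constructed intervals. First I would reformulate everything additively: writing $L_j := \log p_j$ (so $L_j \in (\theta r^{-j}\log\omega, (1-\epsilon)r^{-j}\log\omega]$ for $j>K$, and a comparably short interval for $j\le K$), $\ell := g + \alpha + \log(Q_1Q_2)$, $\ell' := g'$, and $Z := \log z$, the hypothesis reads $\ell + \ell' + \sum_{j=1}^J L_j = Z + O(1)$, and the goal \eqref{partition} becomes: find $\pi \sqcup \tau = \{1,\dots,J\}$ with
\begin{align*}
\ell + \sum_{j\in\pi} L_j = \tfrac{Z}{2} + O(1), \qquad \ell' + \sum_{j\in\tau} L_j = \tfrac{Z}{2} + O(1),
\end{align*}
where of course the second identity follows from the first by the hypothesis. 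So everything reduces to a one-dimensional subset-sum approximation: given target $t := Z/2 - \ell$, find $\pi \subseteq \{1,\dots,J\}$ with $\sum_{j\in\pi} L_j = t + O(1)$. The key structural input is that $L_1 > L_2 > \dots > L_J$ decay geometrically like $r^{-j}$, so $L_{j+1} + L_{j+2} + \cdots \asymp L_j$ — this is precisely the well-factorability condition forced by $\theta = r - 1 + \epsilon$ discussed in the remark. Hence the classical greedy algorithm works: process $j = 1, 2, \dots, J$ in order, and put $j$ into $\pi$ iff doing so keeps the running partial sum $\le t$ (say); because each $L_j$ is at most a constant times the tail $L_{j+1} + \cdots + L_J$, the final error is $O(L_J) = O(\log K) = O(1)$ in our normalization (recall $p_J \in [K^{(r-1)/r}, K]$), plus the $O(1)$ slack already present in the hypothesis. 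I would need to check that $t$ is in the feasible window $[O(1), \sum L_j + O(1)]$, which follows from the ranges assumed for $e^g, e^{g'}, e^\alpha$ and the hypothesis $e^{g+g'}Q_1Q_2e^\alpha p_1\cdots p_J \asymp z$.

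Next I would make the greedy procedure into the formal "partition algorithm" of the statement: given $(g, g', \alpha, p_1, \dots, p_J)$, compute $t$, run the greedy sweep just described, and output the resulting $\pi \sqcup \tau$. The substantive claim is the "if and only if" characterization: for a fixed partition $\pi \sqcup \tau$, there should be intervals $D((p_j)_{j\in\pi})$ and $D((p_j)_{j\in\tau})$ — depending only on the primes indexed by $\pi$, resp. $\tau$ — such that, under the constraint $e^{g+g'}Q_1Q_2e^\alpha p_1\cdots p_J \asymp z$, the algorithm outputs exactly this $\pi$ precisely when $e^{g+\alpha}Q_1Q_2 \in D((p_j)_{j\in\pi})$ and $e^{g'} \in D((p_j)_{j\in\tau})$. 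To construct $D((p_j)_{j\in\pi})$: the greedy decision at each step $j$ is a threshold comparison of the current partial sum against $t$, and $t$ depends on $\ell = g + \alpha + \log(Q_1Q_2)$ only through an affine shift. Running the algorithm symbolically in the single unknown $\ell$, each of the $J$ branch points is a halfplane condition on $\ell$, so the set of $\ell$ producing a given run is an interval; translating back, the set of admissible $e^{g+\alpha}Q_1Q_2$ is an interval $D_\pi^{\mathrm{raw}}$ depending on all the $p_j$. The real content is that this interval can be taken to depend only on $(p_j)_{j\in\pi}$: here one uses the constraint to eliminate $\prod_{j\in\tau} p_j = z/(e^{g+g'}Q_1Q_2e^\alpha \prod_{j\in\pi}p_j)$, so the "other side" of every threshold can be rewritten in terms of the $\pi$-primes and $e^{g'}$ and $z$; the branch points that involve $\tau$-primes become, after substitution, either automatically satisfied (by the structure of the greedy order — once we commit to $\tau$ for index $j$, the subsequent greedy choices are forced in a way recorded by $\pi$) or conditions on $e^{g'}$ alone, which we absorb into $D((p_j)_{j\in\tau})$. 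I would define $D((p_j)_{j\in\tau})$ symmetrically by running the mirror of the greedy algorithm, and then verify the two interval conditions are jointly equivalent to "the algorithm outputs $\pi$". Finally, since the greedy sweep by construction produces partial sums within $O(1)$ of $Z/2$, both $D$'s are automatically contained in $C$-adic intervals around $z^{1/2}/\prod_{j\in\pi}p_j$ and $z^{1/2}/\prod_{j\in\tau}p_j$ respectively, giving the last assertion; and since the algorithm always produces \emph{some} partition, and that partition satisfies \eqref{partition2} hence \eqref{partition}, the existence claim in the first paragraph of the lemma also follows.

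The main obstacle I anticipate is the bookkeeping behind the claim that $D((p_j)_{j\in\pi})$ depends \emph{only} on the primes in $\pi$ (and similarly for $\tau$), rather than on all of $p_1, \dots, p_J$. Naively the greedy run at step $j$ looks at the whole prefix $L_1, \dots, L_j$, so the interval of admissible $\ell$ seems to depend on all primes seen so far, including those put in $\tau$. The way out is to observe that the greedy algorithm's decisions can be re-expressed as: $j \in \pi$ iff $\sum_{i\le j,\, i\in\pi} L_i \le t$, and that (using the geometric decay so that no later term can "undo" an overshoot) the set $\pi$ it outputs is a "staircase" set completely determined by $t$ modulo the $L_j$; and then the constraint lets us trade $\sum_{j\in\tau}L_j$ for $Z - \ell - \ell' - \sum_{j\in\pi}L_j + O(1)$, killing the apparent dependence on the $\tau$-primes on the $\pi$-side. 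Getting the quantifier structure of the "if and only if" exactly right — in particular making sure the $O(1)$ slacks in the hypothesis and in the greedy error are threaded consistently so that \eqref{partition2} $\Leftrightarrow$ (algorithm outputs $\pi$) holds on the nose — will require care, but no new ideas. A secondary nuisance is the index range $j \le K$ versus $j > K$: for $j \le K$ the intervals $I_j$ are very short (length $\epsilon r^{-j}\log\omega$), so those $L_j$ are essentially determined, while for $j > K$ they have definite width $\log((1-\epsilon)/\theta) + O(1)$; this only helps (the short ones contribute negligible uncertainty), but I would state the estimates uniformly to avoid case-splitting.
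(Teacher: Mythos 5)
Your first paragraph (existence of a balanced partition via the geometric decay $L_{j+1}+\cdots+L_J \gg L_j$) is fine and is in the same spirit as the paper. The gap is in the second half, which is the actual content of the lemma: the claim that the run of your index-by-index greedy can be characterized by two conditions $e^{g+\alpha}Q_1Q_2\in D((p_j)_{j\in\pi})$ and $e^{g'}\in D((p_j)_{j\in\tau})$ with each interval depending \emph{only} on its own side's primes, as an exact if-and-only-if. In your greedy, the decision at step $j$ is the test $\ell+\sum_{i<j,\,i\in\pi}L_i+L_j\le t$, so whenever $j$ is rejected (hence $j\in\tau$) the recorded condition mixes the accumulated $\pi$-primes with the individual $\tau$-prime $p_j$. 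Your proposed repair -- eliminating the $\tau$-side via the constraint $e^{g+g'}Q_1Q_2e^{\alpha}p_1\cdots p_J\asymp z$ -- does not work: substituting the constraint into the failed test at step $j$ yields a condition of the shape $\ell'+\sum_{i\in\tau,\,i\ne j}L_i+\sum_{i\ge j,\,i\in\pi}L_i<Z/2+O(1)$, which still involves the \emph{future} $\pi$-primes, so it is neither ``automatically satisfied'' nor a condition on $e^{g'}$ and the $\tau$-primes alone; moreover the constraint holds only up to a multiplicative constant, so any such substitution destroys the exact equivalence (\ref{partition2}) $\Leftrightarrow$ ``algorithm outputs $\pi\sqcup\tau$'' that the later regrouping arguments (the factorization of $F(s)$ into $F_{\pi,\alpha}(s)G_\tau(s)$ and the Cauchy--Schwarz step) require.

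The paper sidesteps exactly this obstruction by designing the algorithm differently: it builds the partition in \emph{alternating consecutive blocks}, accumulating on the $\pi$-side (starting from $e^{g+\alpha}Q_1Q_2$) until a stopping inequality against $z^{1/2}$ is met, then switching to the $\tau$-side (starting from $e^{g'}$), and so on; crucially the stopping tests use the deterministic interval endpoints $\omega^{\epsilon r^{-j}}$, $\omega^{r^{-j-1}}$ rather than the actual next prime, so every inequality recorded during the run involves only one side's accumulated primes together with that side's anchor ($e^{g+\alpha}Q_1Q_2$ or $e^{g'}$). The paper then checks that the inequalities at interior indices of a block are implied by those at block ends, so $D((p_j)_{j\in\pi})$ and $D((p_j)_{j\in\tau})$ can be taken as intersections of block-end intervals $\mathcal{J}(\pi,i)$, $\mathcal{J}(\tau,i)$, giving the exact biconditional. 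To make your approach work you would have to modify the greedy so that no rejection test ever consults a prime destined for the other side -- which essentially forces the block-alternating, endpoint-thresholded structure of the paper's algorithm; as written, your argument does not establish the separated-interval characterization.
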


\begin{remark} We need the partition algorithm and the intervals $D( (p_j)_{j\in \pi}),$ $D( (p_j)_{j\in \tau})$ so that we know which partition to apply for each product; by uniqueness of the partition that the algorithm produces, for any given $(g,g',\alpha,p_1,\dots,p_J)$ there is exactly one partition such that (\ref{partition2}) holds.  It will be crucial for us that the interval $D((p_j)_{j\in \pi})$ (resp. $D((p_j)_{j\in \tau})$) depends only on those primes $p_j$ such that $j \in \pi$ (resp. $p_j$ such that $j \in \tau$). 

Note that the above lemma does not include the primes $r_l.$ This is because we want to reserve a possibility to skew each partition one way or another by some power of $\log x.$ That is, for any partition $\pi\sqcup \tau= \{1,2,\dots,J\},$ we will later choose some suitable $L(\pi,\tau) \leq L$ and write $r_1 \cdots r_L = (r_1\cdots r_{L(\pi,\tau)})\cdot (r_{L(\pi, \tau)+1}\cdots r_L).$
\end{remark}

\begin{proof}
We first construct a suitable partition by using an iterative algorithm, and afterwards recover the intervals  $D((p_j)_{j\in \pi}),$ $D( (p_j)_{j\in \tau}).$ Let $\lfloor \log^{9/10} x \rfloor  \leq \alpha < 2 \lfloor \log^{9/10} x \rfloor$ and  $e^g,e^{g'} \in [x^{1/2-\gamma+\delta/4},x^{1/2-\delta/4}]$ be given, and let $p_1, \dots, p_J$ be such that
\begin{align*}
 e^{g+g'} Q_1 Q_2 e^{\alpha} p_1\cdots p_J \asymp z.
\end{align*}
Since $e^g \leq x^{1/2 -\gamma/2+2\delta},$ we can choose $1 \in \pi,$ since by definitions for any $p_1 \in I_1$ (using $r <2$)
\begin{align*}
e^{g+\alpha} Q_1Q_2 p_1 \leq  x^{1/2-\gamma/2+\delta +\gamma(1-1/r)} < x^{1/2-\delta} < z^{1/2}.
\end{align*}
Let $j_1 \leq J$ be the smallest index such that
\begin{align*}
e^{g+\alpha} Q_1Q_2 p_1 \cdots p_{j_1} \omega^{\epsilon r^{-j_1}} \leq z^{1/2} < e^{g+\alpha} Q_1Q_2 p_1 \cdots p_{j_1} \omega^{r^{-j_1-1}}.
\end{align*}
There must exist such an index since 
\begin{align*}
e^{g+\alpha} Q_1Q_2 p_1p_2\dots p_{J} > x^{\epsilon}z^{1/2} ,
\end{align*}
and if
\begin{align*}
e^{g+\alpha} Q_1Q_2 p_1p_2\dots p_{j}\omega^{r^{-j-1}} \leq z^{1/2} ,
\end{align*}
we can then multiply by $p_{j+1}$ also; we choose $1,2, \dots, j_1 \in \pi.$  Then there is some large $C=C(\epsilon)$ such that if $j_1 \leq J-C,$ then $e^{g'}p_{j_1+1} < z^{1/2},$ because otherwise (using $\theta= r-1+\epsilon$)
\begin{align*}
z & \gg (e^{g+\alpha} Q_1Q_2p_1\cdots p_{j_1} p_{j_1+2}\cdots p_J)\cdot(e^{g'}p_{j_1+1}) \\
&> e^{g+\alpha} Q_1Q_2p_1\cdots p_{j_1} \omega^{\theta(r^{-j_1-2} + r^{-j_1-3}\dots + r^{-J}) } z^{1/2}\\
& \gg e^{g+\alpha} Q_1Q_2p_1\cdots p_{j_1} \omega^{r^{-j_1-1}} \omega^{\epsilon r^{-j_1-2}  } z^{1/2} >  \omega^{\epsilon r^{-j_1-2} } z,
\end{align*}
which is clearly impossible if $C$ is large enough. If $j_1 >  J-C,$ then $e^g Q(\alpha,\pi)p_1\cdots p_{j_1} \asymp z^{1/2},$ which implies that also  $e^{g'}p_{j_1+1}\cdots p_J \asymp z^{1/2}$ so that we are done. Hence, we can assume $j_1 \leq J-C$ and in that case there exists $j_2>j_1,$ which is the smallest index such that
\begin{align*}
e^{g'}p_{j_1+1}p_{j_1+2}\dots p_{j_2}\omega^{\epsilon r^{-j_2}} \leq z^{1/2} < e^{g'}p_{j_1+1}p_{j_1+2}\dots p_{j_2}\omega^{r^{-j_2-1}}.
\end{align*}
We may now iterate the above process  to get $j_1 < j_2 < \cdots < j_R$ where $j_{R-1} > J-C,$ for some suitably large $C,$  and $j_{R} := J,$ so that (\ref{partition}) is satisfied. 

Remark that since we have included the factor $\omega^{\epsilon r^{-j}}$ in the above, we  have $j_{i+1}-j_i \ll_{\epsilon} 1$ for all $i.$ For example, to see that $j_2-j_1$ is bounded, we just have to note that
\begin{align*}
e^{g'}p_{j_1+1} \cdots p_J \gg z/(e^{g+\alpha} Q_1Q_2 p_1 \cdots p_{j_1}) \geq \omega^{\epsilon r^{-j_1}}z^{1/2}.
\end{align*}

We can now recover the intervals  $D((p_j)_{j\in \pi}),$ $D((p_j)_{j\in \tau})$ from the above procedure.   Fix $(g,g',\alpha,p_1,\dots,p_J)$ and $\pi,\tau$ such that the above algorithm produces the partition $\pi,\tau.$   Denote
\begin{align*}
\pi_0 := \{j \in \pi: \, j+1 \notin \pi\} \quad \text{and} \quad \tau_0 := \{j \in \tau: \, j+1 \notin \tau\}.
\end{align*}
For any subset $\rho \subset \{ 1,2,\dots, J \}$ and  $i \leq J$ define $\Pi(\rho,i):= \prod_{j\in \rho, \, j \leq i} p_j.$ Then  for all $i \in \pi_0$ we obtain from the above algorithm
\begin{align} \label{cc1}
e^{g+\alpha} Q_1Q_2 \Pi(\pi,i) \omega^{\epsilon r^{-i}} \leq z^{1/2} < e^{g+\alpha} Q_1Q_2 \Pi(\pi,i)\omega^{r^{-i-1}},
\end{align}
and if $i -1 \in \pi,$ we also have (since at each stage we chose the smallest $j_i$)
\begin{align} \label{cc2}
e^{g+\alpha} Q_1Q_2 \Pi(\pi,i) p_{i}^{-1}\omega^{r^{-i}}  \leq z^{1/2}.
\end{align}
In this case, the above is stricter than the left-hand side inequality in (\ref{cc1}). If also $i -2 \in \pi,$ then 
\begin{align*}
e^{g+\alpha} Q_1Q_2 \Pi(\pi,i)p_{i}^{-1}p_{i-1}^{-1}\omega^{r^{-i+1}} \leq z^{1/2},
\end{align*}
but this is already implied by the inequality (\ref{cc2}) since (using $1.5 < r < 2$ for the last inequality below)
\begin{align*}
p_{i-1}^{-1}\omega^{r^{-i+1}} \leq \omega^{r^{-i+1}(1-\theta)} < \omega^{r^{-i}(2-r)r} < \omega^{r^{-i}}.
\end{align*}
Thus, the inequalities corresponding to each $i \in \pi_0$ are
\begin{align*}
e^{g+\alpha} Q_1Q_2 \Pi(\pi,i) \omega^{\epsilon r^{-i}} \leq z^{1/2} < e^{g+\alpha} Q_1Q_2 \Pi(\pi,i)\omega^{r^{-i-1}} , \quad \text{if}\quad & i -1 \notin \pi, \\
e^{g+\alpha} Q_1Q_2 \Pi(\pi,i) p_{i}^{-1}\omega^{r^{-i}} \leq z^{1/2} < e^{g+\alpha} Q_1Q_2 \Pi(\pi,i)\omega^{r^{-i-1}}, \quad \text{if}\quad & i -1 \in \pi,
\end{align*}
which can be written in the form $e^{g+\alpha} Q_1Q_2  \in \mathcal{J}(\pi,i),$ where
\begin{align*}
 \mathcal{J}(\pi,i) := \begin{cases} \left( \frac{z^{1/2}}{\Pi(\pi,i)\omega^{r^{-i-1}} }, \frac{z^{1/2}}{\Pi(\pi,i)\omega^{\epsilon r^{-i}}}\right], \quad &\text{if}\quad i -1 \notin \pi \\[10pt]
\left( \frac{z^{1/2}}{\Pi(\pi,i)\omega^{r^{-i-1}} }, \frac{p_{i}z^{1/2}}{\Pi(\pi,i)\omega^{r^{-i}} }\right], \quad & \text{if}\quad i-1 \in \pi.
 \end{cases}
\end{align*}
Similarly, we obtain from the algorithm conditions $e^{g'} \in \mathcal{J}(\tau,i) $ for each $i \in \tau_0.$ 
Hence, we can set
\begin{align*}
D((p_j)_{j\in \pi}):=  \bigcap_{i \in \pi_0}  \mathcal{J}(\pi,i), \quad \quad D((p_j)_{j\in \tau}):=  \bigcap_{i \in \tau_0}  \mathcal{J}(\tau,i).
\end{align*}
Then (\ref{partition2}) is satisfied if and only if the partition algorithm produces the partition $\pi \sqcup \tau = \{1,2,\dots, J\}.$ Since $j_{i+1}-j_{i} \ll_{\epsilon} 1,$ the intervals are always contained in some $C$-adic intervals around $z^{1/2} /(\prod_{j \in \pi} p_j) $ and $z^{1/2} /(\prod_{j \in \tau} p_j) ,$ respectively.
\end{proof}

We now apply the above lemma to obtain a suitable factorization: let $\CC=\A$ or $\CC=\B,$ and suppose that the assumptions of Proposition \ref{typeiiestimate} are satisfied. Then by dividing $e$-adically the ranges for $uc,$ $vc',$ and $q_3,$ we obtain
\begin{align*}
\sum_{u,v}  W_{\mathcal{C}}(uv) a_u b_v = \sum_{g,g',\alpha} \sum_{uc \sim e^g} \sum_{vc' \sim e^{g'}} \sum_{\substack{q_1\sim Q_1 \\ q_2 \sim Q_2 \\ q_3 \sim e^{\alpha}}}  \, \, \sum_{\substack{p_1,\dots,p_J,r_1,\dots, r_L \\ e^{g+g'}Q_1Q_2e^{\alpha}p_1\cdots p_Jr_1\cdots r_L \asymp x}} a_u b_v f_{\CC},
\end{align*} 
where $f_{\mathcal{C}}:= f_{\mathcal{C}}(uvcc'q_1q_2q_3p_1 \cdots p_J r_1 \dots r_{L}  /x).$ By applying Lemma \ref{pa} and the remark following the lemma, we can partition this sum into
\begin{align*}
& \sum_{\substack{g,g',\alpha}} \sum_{uc \sim e^g} \sum_{vc' \sim e^{g'}} \sum_{\substack{q_1\sim Q_1 \\ q_2 \sim Q_2 \\ q_3 \sim e^{\alpha}}} \,\sum_{\pi \sqcup \tau = \{1,\dots,J\}} \, \sum_{\substack{p_1,\dots,p_J,r_1,\dots, r_L \\ e^{g+g'}Q_1Q_2e^{\alpha}p_1\cdots p_Jr_1\cdots r_L \asymp x \\  e^{g+\alpha} Q_1Q_2  \in D((p_j)_{j\in \pi}) \\ e^{g'} \in D((p_j)_{j\in \tau})}} a_u b_v f_{\CC}  \\
&=\sum_{\pi \sqcup \tau = \{1,\dots,J\}} \sum_{r_1,\dots, r_L}\sum_{\alpha}  \sum_{\substack{q_1\sim Q_1 \\ q_2 \sim Q_2 \\ q_3 \sim e^{\alpha}}} \sum_{\substack{p_j, \, j \in \pi}} \sum_{\substack{g \\ e^{g+\alpha} Q_1Q_2  \in D((p_j)_{j\in \pi}) }} \, \sum_{\substack{uc \sim e^g}}  \sum_{\substack{p_j, \, j \in \tau}} \sum_{\substack{g'\\ e^{g'} \hspace{-2pt}  \in D((p_j)_{j\in \tau}) }}  \sum_{\substack{vc' \sim e^{g'} }}\hspace{-5pt} a_ub_v f_{\CC}
\end{align*}
Let us now define 
\begin{align}
 \mathcal{I}(\alpha,(p_j)_{j\in \pi}):= \bigcup_{\substack{g \\ e^{g+\alpha} Q_1Q_2 \in D((p_j)_{j\in \pi})}} (e^g, e^{g+1}], \\ \mathcal{I}'((p_j)_{j\in \tau}):= \bigcup_{\substack{ g' \\ e^{g'} \in D((p_j)_{j\in \tau})}} (e^{g'}, e^{g'+1}]
\end{align}
Note that by Lemma \ref{pa} these are either empty or $C$-adic intervals. By using the Mellin inversion formula for $f_{\CC},$ we obtain
\begin{lemma} \label{mellinlemma} Let $\CC = \A$ or $\CC=\B,$ and suppose that  the assumptions of Proposition \ref{typeiiestimate} hold.  Then
\begin{align} \label{mellinintegral}
 \sum_{u,v}  W_{\mathcal{C}}(uv) a_ub_v =  \frac{1}{2\pi i} \int_{1-i\infty}^{1+i\infty} F(s) x^{s} \hat{f}_{\mathcal{C}}(s)\, \frac{ds}{s},
\end{align}
where 
\begin{align*}
F(s) := \sum_{\pi \sqcup \tau = \{1,2,\dots J\}}  R(s)^LG_{\tau}(s) Q_1(s)Q_2(s) \sum_{\alpha} \sum_{q \sim e^{\alpha}} q^{-s}F_{\pi,\alpha}(s) ,
\end{align*}
where the sum over $\alpha$ runs over $\lfloor \log^{9/10} x \rfloor  \leq \alpha < 2 \lfloor \log^{9/10} x \rfloor,$ $Q_i(s):=\sum_{q \sim Q_i} q^{-s},$ $R(s):= \sum_{r \sim \log^{\epsilon} x} r^{-s},$ and
\begin{align*}
F_{\pi,\alpha}(s)  &:= \sum_{\substack{p_j, \, j \in \pi}}  \sum_{\substack{uc\in \mathcal{I}(\alpha,(p_j)_{j\in \pi}) }} a_u (uc \prod_{j \in \pi} p_j)^{-s}\\
G_{\tau}(s) &:= \sum_{\substack{p_j, \, j \in \tau}}   \sum_{\substack{vc' \in \mathcal{I}'((p_j)_{j\in \tau}) }} b_v (vc'\prod_{j \in \tau} p_j)^{-s}.
\end{align*}
\end{lemma}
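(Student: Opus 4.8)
The plan is to Mellin-invert each factor $f_{\CC}$ occurring in the decomposition of $\sum_{u,v}W_{\CC}(uv)a_ub_v$ obtained in the paragraph preceding the statement, interchange the resulting sum and integral, and then recognise the Dirichlet polynomial that appears as $F(s)$.

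First I would record that for $N\ge 1$, taking $\sigma=1$ in the Mellin inversion formula and invoking Lemma~\ref{mellin},
\[
f_{\CC}(N/x)=\frac{1}{2\pi i}\int_{1-i\infty}^{1+i\infty}\frac{x^{s}}{N^{s}}\,\frac{\hat{f}_{\CC}(s)}{s}\,ds ,
\]
which is legitimate because $\hat{f}_{\CC}(s)/s$ is entire (the apparent poles at $s=0$ and $s=-1$ are removable, by the asymptotics in Lemma~\ref{mellin}) and, on the line $\sigma=1$, the bound $|\hat{f}_{\CC}(s)/s|\ll |s|^{-1}|1+s|^{-1}$ from Lemma~\ref{mellin} makes the integrand decay like $|t|^{-2}$, so the integral converges absolutely. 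Applying this with $N=uvcc'q_1q_2q_3p_1\cdots p_Jr_1\cdots r_L$ in each term of the decomposed sum and interchanging the summation with the integral — which is immediate, since all of $u,v,c,c',q_i,p_j,r_l$ are bounded by a power of $x$ and so the relevant Dirichlet polynomials are finite — yields $\frac{1}{2\pi i}\int_{1-i\infty}^{1+i\infty}\widetilde{F}(s)\,x^{s}\hat{f}_{\CC}(s)\,\frac{ds}{s}$, where $\widetilde{F}(s)$ is the Dirichlet polynomial of the decomposed sum, carrying all of its constraints, in particular $e^{g+g'}Q_1Q_2e^{\alpha}p_1\cdots p_Jr_1\cdots r_L\asymp x$.

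Next I would put $\widetilde{F}(s)$ into the factored form claimed. Here the key point — which is exactly why Lemma~\ref{pa} was formulated the way it was (cf.\ the remark following it) — is that, after summing $g$ over $\{g:e^{g+\alpha}Q_1Q_2\in D((p_j)_{j\in\pi})\}$ and $g'$ over $\{g':e^{g'}\in D((p_j)_{j\in\tau})\}$, the surviving constraints are precisely $uc\in\mathcal{I}(\alpha,(p_j)_{j\in\pi})$ and $vc'\in\mathcal{I}'((p_j)_{j\in\tau})$, where $\mathcal{I}(\alpha,(p_j)_{j\in\pi})$ involves only $\alpha$ and the primes $(p_j)_{j\in\pi}$ while $\mathcal{I}'((p_j)_{j\in\tau})$ involves only $(p_j)_{j\in\tau}$. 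Thus nothing couples the block $u,c,(p_j)_{j\in\pi}$ to the block $v,c',(p_j)_{j\in\tau}$ or to the primes $q_1,q_2,r_1,\dots,r_L$, so for each fixed partition $\pi\sqcup\tau$ the corresponding subsum factors as $R(s)^{L}G_{\tau}(s)Q_1(s)Q_2(s)\sum_{\alpha}\bigl(\sum_{q\sim e^{\alpha}}q^{-s}\bigr)F_{\pi,\alpha}(s)$ with $F_{\pi,\alpha}$ and $G_{\tau}$ as in the statement ($G_{\tau}$ comes out of the $\alpha$-sum because $\mathcal{I}'$ is independent of $\alpha$). Summing over all partitions gives $\widetilde{F}(s)=F(s)$ up to the single extra constraint $\asymp x$, with each configuration counted exactly once since, by Lemma~\ref{pa}, for any admissible tuple there is precisely one partition for which (\ref{partition2}) holds.

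Finally I would discard that leftover constraint. Writing $F(s)-\widetilde{F}(s)=\sum_{m}c_{m}m^{-s}$, the coefficient $c_{m}$ is supported on integers $m$ with $m\not\asymp x$; since $f_{\CC}(z)$ is supported on a short interval about $z=1$, for every such $m$ one has $\frac{1}{2\pi i}\int_{1-i\infty}^{1+i\infty}x^{s}m^{-s}\hat{f}_{\CC}(s)\,\frac{ds}{s}=f_{\CC}(m/x)=0$, so $\int F(s)x^{s}\hat{f}_{\CC}(s)\frac{ds}{s}=\int\widetilde{F}(s)x^{s}\hat{f}_{\CC}(s)\frac{ds}{s}=2\pi i\sum_{u,v}W_{\CC}(uv)a_ub_v$, which is (\ref{mellinintegral}). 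I do not expect a genuine obstacle; the one delicate point is the bookkeeping of the third paragraph, namely checking that the interval conditions genuinely separate the two blocks of variables so that $F(s)$ is a product, and that the partition decomposition double-counts nothing — and this is exactly what Lemma~\ref{pa} was built to supply.
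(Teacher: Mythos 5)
Your proposal is correct and follows essentially the same route as the paper: the paper likewise regroups the sum via the partition algorithm (Lemma \ref{pa}), whose uniqueness and the fact that (\ref{partition2}) forces (\ref{partition}) justify introducing the partition and dropping the $\asymp x$ constraint, and then applies Mellin inversion on the line $\sigma=1$ using Lemma \ref{mellin}. The only difference is cosmetic — you invert first and discard the redundant $\asymp x$ condition under the integral using the support of $f_{\CC}$, whereas the paper discards it at the level of the sums before inverting.
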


\begin{remark} We have separated the sums over $q_i$ to make the use of the method of Matom\"aki-Radzivi\l\l \, \cite{MR} easier; this spares us of the Lemma 12 of \cite{MR} in our situation. Dividing the ranges for $uc,$ $vc'$ into $e$-adic has the benefit that the sums over $uc$, $vc'$ always run over either a $C$-adic interval, or no interval at all (cf. proof of Lemma \ref{trivial} for why this is helpful). 
\end{remark}

We now divide the integration in (\ref{mellinintegral}) into two parts: let $T_0 := \log^{50} x,$ and define
\begin{align*}
 U_{\mathcal{C}} :=  \frac{1}{2\pi i} \int_{|t| \leq T_0} F(s) x^{s} \hat{f}_{\mathcal{C}}(s)\, \frac{ds}{s} \quad  \quad \quad V_{\mathcal{C}} :=   \frac{1}{2\pi i} \int_{|t| > T_0} F(s) x^{s} \hat{f}_{\mathcal{C}}(s)\, \frac{ds}{s}.
\end{align*}
Then by the same argument as in Section 10 of \cite{MR} (or by using Lemma \ref{mellin} similarly as in the proof of the Type I/II estimate) we find that
\begin{align*}
\left | \frac{1}{y}U_{\mathcal{A}}-\frac{\log^{100} x}{x}U_{\mathcal{B}} \right | \ll \log^{-20} x,
\end{align*}
and 
\begin{align*}
\left | \frac{1}{y}V_{\mathcal{A}}-\frac{\log^{100} x}{x}V_{\mathcal{B}} \right | \lesssim \int_{T_0}^{x/y} |F(1+it)| \, dt +& \frac{x}{y} \max_{T > x/y} \frac{1}{T} \int_{1+iT}^{1+i2T} |F(1+it)| \, dt \, +\\
& + (\log^{100} x) \max_{T > \log^{100} x} \frac{1}{T} \int_{1+iT}^{1+i2T} |F(1+it)| \, dt.
\end{align*}
The second term is handled by a similar argument as the first, and the third term is trivially bounded by the sum of the first two. The difficult part is to prove
\begin{prop} \label{mvtii} Let  $T_0 = \log^{50}x,$ and let $F(s)$ be as in Lemma \ref{mellinlemma}.
 Then
\begin{align*}
 \int_{T_0}^{x/y} |F(1+it)| \, dt \lesssim (\log^{-\delta} x ) \frac{\Omega}{\log x}.
\end{align*}
\end{prop}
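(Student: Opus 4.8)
## Proof proposal for Proposition \ref{mvtii}

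The plan is to bound $\int_{T_0}^{x/y}|F(1+it)|\,dt$ by first passing to the individual pieces of $F(s)$ indexed by the partition $\pi\sqcup\tau=\{1,\dots,J\}$. Since there are only $2^J=\log^{O(1)}x$ such partitions (note $J\ll\log\log x/\log r$, so $2^J=\log^{(\log 2)/\log r+o(1)}x$), and since $\Omega/\log x$ already carries a factor $(\log 1/\theta)^J$, the gain we are allowed to lose per partition is essentially $2^{-J}(1+\log 1/\theta)^{-J}(\log 1/\theta)^J$ — which is exactly the content of the definition of $y$ in \eqref{y}. So the first step is to fix one partition $\pi,\tau$ and one choice of $\alpha$ (there are only $\log^{9/10}x=\log^{o(1)}x$ values of $\alpha$, hence harmless), and reduce to bounding
\begin{align*}
\int_{T_0}^{x/y}\big|R(1+it)^L\,Q_1(1+it)Q_2(1+it)\,Q^{(\alpha)}(1+it)\,F_{\pi,\alpha}(1+it)\,G_\tau(1+it)\big|\,dt,
\end{align*}
where $Q^{(\alpha)}(s)=\sum_{q\sim e^\alpha}q^{-s}$. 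By Lemma \ref{pa}, $F_{\pi,\alpha}$ is a Dirichlet polynomial of length $\asymp e^{g+\alpha}Q_1Q_2\prod_{j\in\pi}p_j\asymp z^{1/2}\asymp \sqrt{x}/\log^{L\epsilon/2}x$, and similarly $G_\tau$ has length $\asymp z^{1/2}$; this is the crucial factorization property the whole set-up was engineered to produce. The coefficients of $F_{\pi,\alpha}$ and $G_\tau$ are bounded by $\tau_w\cdot\tau^{(O(1))}$ type divisor functions (from $a_u,b_v$ being $\lesssim\tau_w$, the $w$-smooth $c,c'$, and collisions among the $p_j$), so their $\ell^2$ norms on a dyadic block are $\lesssim 1$ times the trivial size, by Shiu's bound (Lemma \ref{shiu}) and the bound for exceptionally smooth numbers (Lemma \ref{hrdd}) exactly as in the Type I/II proof.

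The second and main step is the Dirichlet polynomial mean value bound itself. Here I would follow the Matom\"aki--Radziwi\l\l\ iterative scheme. Split $[T_0,x/y]$ into the set $\mathcal{T}$ where $|Q^{(\alpha)}(1+it)|$ (or a suitable product of the $q$-polynomials) is "large", say $\geq e^{-\alpha\kappa}$ for a small parameter $\kappa$, and its complement $\mathcal{U}$. On $\mathcal{U}$, since $\alpha\asymp\log^{9/10}x$ is a genuine power and $Q^{(\alpha)}$ appears to the first power, the pointwise saving $e^{-\alpha\kappa}=\exp(-\kappa\log^{9/10}x)$ beats any power of $\log x$, and after Cauchy--Schwarz plus the mean value theorem (Lemma \ref{mvt1}) applied to $F_{\pi,\alpha}$ and $G_\tau$ — each of which has length $\gg x^{1/2}\log^{-O(\epsilon)}x\gg x/y$ up to the $\log^{L\epsilon/2}$ factor, so that the "$T$" term and the "$N$" term are comparable — the contribution is $\lesssim x^{-c}$ for some $c>0$, hence negligible. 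On $\mathcal{T}$, we use the large values estimate (Lemma \ref{large}) to count: $|\mathcal{T}|\ll (x/y)^{2\kappa}e^{2\alpha\kappa}\exp\{2\log(x/y)\log\log(x/y)/\alpha\}$, and since $\alpha\asymp\log^{9/10}x$ the exponential factor is $\exp(O(\log^{1/10}x\log\log x))$, which is $\lesssim 1$ in the $\log^{o(1)}$ sense but not negligible — so we must choose $\kappa$ small enough that $(x/y)^{2\kappa}e^{2\alpha\kappa}$ stays under control. Then on $\mathcal{T}$ we bound the $q$-polynomials trivially by their length, and apply the Hal\'asz--Montgomery inequality (Lemma \ref{halasz}) to $F_{\pi,\alpha}G_\tau$ (viewed as a single polynomial of length $\asymp z$, or factored via Cauchy--Schwarz), using $|\mathcal{T}|\sqrt{T}+N$ with $N\asymp z\asymp x$ and $T=x/y$: here the point is that $|\mathcal{T}|\sqrt{x/y}\ll x$ precisely because $x/y$ is only barely larger than $\sqrt{x}$, so the Hal\'asz--Montgomery bound is dominated by its $N$-term, giving $\sum_{t\in\mathcal{T}}|F_{\pi,\alpha}G_\tau|^2\lesssim x\log x$, and after taking square roots and dividing by the normalization we recover the bound $\lesssim (\log 1/\theta)^{J/2}(1+\log 1/\theta)^{J/2}2^{J/2}\cdot(\text{stuff})/(x/y)$, which by the definition \eqref{y} of $y$ is $\lesssim \log^{-\delta}x\cdot\Omega/\log x$.

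The hard part — and the place where the length of the interval is genuinely decided — is the bookkeeping that ties together the normalization $\Omega/\log x=\log^{\beta+O(\delta)+o(1)}x/\log x$ (equivalently, $(\log 1/\theta)^J$ up to $\log^{o(1)}$), the factor $x/y=\sqrt{x}\log^{1/2-\beta+o(1)}x$, and the mean-value output. Concretely: the mean value of $|F_{\pi,\alpha}G_\tau|$ over $[T_0,x/y]$ is, by Cauchy--Schwarz and the two mean value theorems, $\lesssim (x/y)^{1/2}\cdot(\|F_{\pi,\alpha}\|_2\|G_\tau\|_2/(x/y)^{1/2})\asymp \|F_{\pi,\alpha}\|_2\|G_\tau\|_2$, and the $\ell^2$-norms-squared of $F_{\pi,\alpha}$ and $G_\tau$ are each $\lesssim z^{1/2}\cdot(\text{number of representations}) \lesssim z^{1/2}(1+\log 1/\theta)^J$ by the collision count already carried out in the Type I/II proof (the $(1+\log 1/\theta)^J$ from $p_j$-$p'_j$ collisions with the smooth variable), while multiplying together the $J$ sums $\sum_{p\in I_j}p^{-1}\asymp\log 1/\theta$ contributes $(\log 1/\theta)^J$ and the $2^J$ partitions contribute $2^J$. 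Balancing $2^J(\log 1/\theta)^{J/2}(1+\log 1/\theta)^{J/2}\cdot(x/y)/x$ against $(\log 1/\theta)^J$ forces exactly the choice of $y$ in \eqref{y}, i.e. $y=\sqrt{x}\log^{-1/2+O(\delta)}x\cdot 2^{J/2}(1+\log 1/\theta)^{J/2}(\log 1/\theta)^{-J/2}$, and this is the line \eqref{length} referred to in the introduction. I would therefore organize the proof so that this one inequality is isolated cleanly as the "trivial bound" lemma (Lemma \ref{trivial}), with the $\mathcal{T}/\mathcal{U}$ split and the large-values/Hal\'asz--Montgomery input quoted around it; everything else is divisor-function estimation of the kind already done for the Type I/II sum.
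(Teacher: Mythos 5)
Your split of $[T_0,x/y]$ according to the size of the $q_3$-polynomial $Q^{(\alpha)}(1+it)=\sum_{q\sim e^{\alpha}}q^{-1-it}$ has a genuine gap on the "large values" set $\mathcal{T}$: there you have no source of the required factor $\log^{-\delta}x$. On your $\mathcal{U}$ the pointwise saving $e^{-\alpha\kappa}=\exp(-\kappa\log^{9/10}x)$ indeed crushes any polylogarithmic loss (though the resulting bound is $\log^{-C}x$, not $x^{-c}$), but on $\mathcal{T}$, where $|Q^{(\alpha)}(1+it)|\geq e^{-\alpha\kappa}$, Hal\'asz--Montgomery with its $N$-term dominating can at best reproduce the correct-order bound $\lesssim\Omega/\log x$ -- that is all the choice of $y$ in (\ref{y}) buys (cf. line (\ref{length})) -- and a small $|\mathcal{T}|$ gives no further gain once the $N$-term dominates; nor is $\mathcal{T}$ empty, since Lemma \ref{vino} only forces $|Q^{(\alpha)}|\ll\log^{-40}x$, which is far larger than your threshold $e^{-\alpha\kappa}$. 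The paper's proof gets the saving elsewhere: it splits according to $|Q_1(1+it)|$ and $|Q_2(1+it)|$ (the polynomials inserted precisely for this Matom\"aki--Radziwi\l\l{} step). On $\mathcal{T}_1$ the pointwise factor $Q_1^{-1/4+2\epsilon}\ll\log^{-\delta}x$ supplies the saving; on $\mathcal{T}_2$ one trades $Q_2(s)$ for $Q_1(s)^{H+1}$ at cost $Q_2^{-1/4+\epsilon}Q_1^{H(1/4-2\epsilon)}=Q_2^{-\epsilon}\leq\log^{-2\delta}x$; in both cases the remaining mean value is shown to be of correct order via Cauchy--Schwarz, the \emph{improved} mean value theorem (Lemma \ref{mvt2} -- using Lemma \ref{mvt1} as you propose loses a power of $\log x$ through the sparseness of the sequence), and the modified cases (i),(ii) of Lemma \ref{trivial}. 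Only on the residual set where both $Q_1,Q_2$ are large does one count well-spaced points via Lemma \ref{large} applied to $Q_2$ (whose short length $\exp((\log\log x)^{3/2})$ keeps the factor $\exp(2\log T\log\log T/\log Q_2)$ harmless, giving $|\mathcal{T}|\ll(x/y)^{1/2-\epsilon}$), and there the saving comes from the unconditional Vinogradov bound of Lemma \ref{vino} for the long $q_3$-sum, $\ll\log^{-40}x$ for all $\log^{50}x\leq|t|\leq x$, followed by Hal\'asz--Montgomery and Lemma \ref{trivial2}. Your sketch never uses $Q_1$, $Q_2$, or Lemma \ref{vino}, so the mechanism that actually produces $\log^{-\delta}x$ is missing.

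There are also two bookkeeping problems in your reduction. Fixing one partition and one $\alpha$ and multiplying by the counts is too lossy: there are $\asymp\log^{9/10}x$ values of $\alpha$ (this is not $\log^{o(1)}x$), and bounding the partition sum by $2^J\max_{\pi,\tau}$ costs $2^J=\log^{\log 2/\log r+o(1)}x$, whereas the definition of $y$ only affords $2^{J/2}$; in the paper this factor arises by keeping the sum over partitions inside a Cauchy--Schwarz and then regrouping the two halves back into the single full product using the interval conditions $D(\cdot)$ of the partition algorithm (Lemma \ref{pa}), while the $\alpha$-sum is kept inside the polynomial except on the region where the $\log^{-40}x$ Vinogradov saving can absorb a maximum over $\alpha$. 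Your final balancing indeed writes $2^J$, which does not close with the stated $y$; even the correct-order half of your argument therefore needs the Cauchy--Schwarz-then-regroup structure of Lemma \ref{trivial} together with Lemma \ref{mvt2}, not a per-partition estimate with Lemma \ref{mvt1}.
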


For the proof need the following lemma, which states that we can obtain an estimate for the integral, which is of the correct order up to a factor of $\log^{o(1)}x,$ that is,
\begin{align*}
 \int_{T_0}^{x/y} |F(1+it)| \, dt \lesssim  \frac{\Omega}{\log x}.
\end{align*}
 After proving the lemma we can use the method of Matom\"aki and Radziwi\l\l \, \cite{MR} to get a saving $\log^{-\delta} x$ over this.  For all partitions $\pi \sqcup \tau,$ we will choose suitable $L(\pi,\tau) \leq L$ and write $R(s)^L= R(s)^{L(\pi,\tau)}R(s)^{L-L(\pi,\tau)}$ (cf. last case in the proof of Lemma \ref{trivial} below).  Then, once we use the triangle inequality to bring out the sum over the partitions, we can use Cauchy-Schwarz to the integral to obtain a sum over products of mean squares of Dirichlet polynomials (writing $s=1+it$)
\begin{align*}
 \sum_{\pi \sqcup \tau = \{1,2,\dots J\}}& \left( \int_{T_0}^{x/y} \left| R(s)^{L(\pi,\tau)}Q_1(s)Q_2(s) \sum_{\alpha} \sum_{q \sim e^{\alpha}} q^{-s}F_{\pi,\alpha}(s)  \right|^2  dt \right)^{1/2} \\ & \hspace{170pt}\cdot \left( \int_{T_0}^{x/y}\left| R(s)^{L-L(\pi,\tau)} G_{\tau}(s) \right|^2 dt\right)^{1/2}\hspace{-10pt}.
\end{align*}
 The quantity estimated in Lemma \ref{trivial} is then the result of applying the Improved mean value theorem (Lemma \ref{mvt2}). We note that obtaining this lemma is what determines the length of the interval $y$ (cf. line (\ref{length}) in the proof below). For the lemma, define the range for the combined variable $ucq_3$
\begin{align} \label{interval}
\mathcal{I}((p_j)_{j\in \pi}):= \bigcup_{\substack{g,\alpha \\ e^{g+\alpha} Q_1Q_2 \in D((p_j)_{j\in \pi})}} (e^{g+\alpha}, e^{g+\alpha+2}]
\end{align}
Clearly $\mathcal{I}((p_j)_{j\in \pi})$ is a $C$-adic interval and for any $\alpha$ we have $\mathcal{I}(\alpha,(p_j)_{j\in \pi}) \neq \emptyset$ if and only if $\mathcal{I}((p_j)_{j\in \pi}) \neq \emptyset$.
\begin{lemma} \emph{\textbf{(Correct-order estimate).}} \label{trivial}
Suppose that the assumptions of Proposition \ref{typeiiestimate} hold. There exists a function $L(\cdot,\cdot):\{(\pi,\tau): \, \pi \sqcup \tau = \{1,2,\dots,J\}\}  \to \{1,2,\dots, L\}$ such that the following holds: 

Let 
\begin{align*}
E &:= \sum_{\pi \sqcup \tau =\{1,\dots,J\}} \left( S_{\pi,1} +S_{\pi,2}\right)^{1/2}\left( S_{\tau,1} +S_{\tau,2}\right)^{1/2},  \\
W(\pi,\tau) &:= (\log x)^{\epsilon (L(\pi,\tau)-L/2) }
\end{align*}
where
\begin{align*}
S_{\pi,1} &:= \frac{ W(\pi,\tau)^2 }{y} \sum_{m \asymp W(\pi,\tau)^{-1} \sqrt{x}} \left| \quad \sum _{\substack{m=ucq_1q_2q_3 r_1 \dots r_{L(\pi,\tau)}\prod_{j \in \pi} p_j \\ ucq_3 \in \mathcal{I}((p_j)_{j \in \pi}) }} a_u \right|^2, \\
S_{\pi,2} &:= \frac{W(\pi,\tau)^2}{y}\sum_{h=1}^{\lfloor yx^{-1/2} W(\pi,\tau)^{-1}  \rfloor} \hspace{-12pt} \sum_{\substack{ucq_1q_2q_3 r_1 \dots r_{L(\pi,\tau)}\prod_{j \in \pi} p_j -u'c'q'_1q'_2q'_3 r'_1 \dots r'_{L(\pi,\tau)}\prod_{j \in \pi} p'_j = h \\ ucq_3 \in \mathcal{I}((p_j)_{j \in \pi}) \\ u'c'q'_3 \in \mathcal{I}((p'_j)_{j \in \pi}) }} |a_u||a_{u'}|, \\
S_{\tau,1} &:= \frac{ 1 }{yW(\pi,\tau)^2} \sum_{m \asymp W(\pi,\tau) \sqrt{x}} \left| \quad \sum _{\substack{m=vcr_1 \dots r_{L-L(\pi,\tau)}\prod_{j \in \tau} p_j \\  vc \in \mathcal{I}'((p_j)_{j \in \tau}) }} b_v \right|^2, \\
S_{\tau,2} &:= \frac{1}{yW(\pi,\tau)^2}\sum_{h=1}^{\lfloor yx^{-1/2} W(\pi,\tau)  \rfloor}  \sum_{\substack{ vcr_1 \dots r_{L-L(\pi,\tau)}\prod_{j \in \tau} p_j -v'c'r'_1 \dots r'_{L-L(\pi,\tau)}\prod_{j \in \tau} p'_j = h \\ vc \in \mathcal{I}'((p_j)_{j \in \tau}), \quad v'c' \in \mathcal{I}'((p'_j)_{j \in \tau}) }} |b_v||b_{v'}|.
\end{align*}
 Then  $E \, \lesssim \, \frac{\Omega}{\log x}.$ 
 
 Furthermore, the same bound holds in both of the following modified cases (with the same choice of $L(\cdot,\cdot)$)
 
 \textbf{\emph{(i):}} In the definitions of  $S_{\pi,1},S_{\pi,2}$, the primes  $q_1$ and $q'_1$ are replaced by 1, and  the factor $W(\pi,\tau)$ is replaced by $W(\pi,\tau) Q_1$.
 
  \textbf{\emph{(ii):}} In the definitions of  $S_{\pi,1},S_{\pi,2}$, the primes  $q_2$ and $q'_2$ are respectively replaced by the product of $H$ primes $q_{1,1},\dots,q_{1,H} \sim Q_1$, and $q'_{1,1},\dots,q'_{1,H} \sim Q_1$ (recall that $Q_2=Q_1^H$).
\end{lemma}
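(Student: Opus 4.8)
\emph{Proof plan.}
The plan is to estimate each of the four sums $S_{\pi,1},S_{\pi,2},S_{\tau,1},S_{\tau,2}$ uniformly in $\pi\sqcup\tau$, and then sum over partitions, with the length $y$ calibrated so that the final bookkeeping closes. First I would define $L(\pi,\tau)$. For a given partition the two products $Q_1Q_2\prod_{j\in\pi}p_j$ (with $ucq_3$ confined to the $C$-adic interval $\mathcal{I}((p_j)_{j\in\pi})$) and $\prod_{j\in\tau}p_j$ (with $vc'$ confined to $\mathcal{I}'((p_j)_{j\in\tau})$) are each $\asymp z^{1/2}$ by Lemma \ref{pa}, so their ratio is bounded; I would take $L(\pi,\tau)$ to be the integer in $\{1,\dots,L\}$ closest to $L/2$ for which $(\log^{\epsilon L(\pi,\tau)}x)$ times the first product and $(\log^{\epsilon(L-L(\pi,\tau))}x)$ times the second agree up to a factor $\log^{\epsilon}x$. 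Such an integer exists because $L\epsilon\log\log x$ dwarfs the (bounded) logarithm of the ratio, and $W(\pi,\tau)=(\log x)^{\epsilon(L(\pi,\tau)-L/2)}$ then records the residual imbalance, so that the ``$\pi$-polynomial'' has length $\asymp W(\pi,\tau)^{-1}\sqrt{x}$ and the ``$\tau$-polynomial'' length $\asymp W(\pi,\tau)\sqrt{x}$. In modified case (i) removing $q_1$ shortens the $\pi$-side by a factor $\asymp Q_1$, which is exactly why $W$ is there replaced by $WQ_1$; in case (ii) replacing $q_2$ by $q_{1,1}\cdots q_{1,H}$ changes only the divisor structure at $Q_2$, not any length, so $W$ is unchanged. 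The fact that $S_{\pi,1}^{1/2}S_{\tau,1}^{1/2}$ (and likewise with the $2$'s) produces a clean factor $1/y$ is built into the normalizations $W^2/y$ and $1/(yW^2)$, so the powers of $W$ will disappear from $E$.

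Next I would treat the diagonal sums $S_{\pi,1},S_{\tau,1}$. Opening the square, $S_{\pi,1}$ is a weighted count of pairs of admissible tuples whose common product lies in a fixed dyadic-type block, hence over a range that is a genuine power of $x$. This puts us in the regime where Shiu's Lemma \ref{shiu} applies; I would run the same argument as for $\sum_k|h_1(k)|^2k^{-1}$ in the proof of Proposition \ref{typeiestimate2}: absorb the $w$-smooth variable $c$ into $u$ (legitimate since $(u,P(w))=1$), use the disjointness of the $I_j$ so that $p_j\neq p_j'$ forces $p_j'$ to divide the complementary part, split over the set of indices where the primes differ, and bound the resulting collision sums by Lemma \ref{shiu} and Lemma \ref{hrdd}. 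The crucial point that keeps the overhead down to $\log^{o(1)}x$ rather than a genuine power of $\log x$ is that, because of the $e$-adic decomposition, the variables $uc$ and $vc'$ run over the $C$-adic intervals $\mathcal{I}((p_j)_{j\in\pi})$, $\mathcal{I}'((p_j)_{j\in\tau})$ rather than long intervals, so the representations of a given product are tightly controlled; this is what the remark after Lemma \ref{mellinlemma} alludes to. The diagonal contribution will in the end be dominated by the off-diagonal one, but it must be tracked carefully so that no power of $\log x$ is lost.

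The main obstacle is the off-diagonal sums $S_{\pi,2},S_{\tau,2}$: one must bound a weighted count of pairs with $m-n=h$, where $m,n$ run over a multiplicative set of size $\asymp W(\pi,\tau)^{-1}\sqrt{x}$ and $1\le h\le\lfloor yx^{-1/2}W(\pi,\tau)^{-1}\rfloor$. Since $yx^{-1/2}$ is only a fixed power of $\log x$ — far below any power of $x$ — Shiu's bound is \emph{not} available for the outer short-interval summation, so one is forced into a direct computation. The idea I would use: in any relation $m-n=h$ with $h$ this small, the large prime factors of $m$ and $n$ (the $q_3$'s and the $p_j$ with $j$ small, which are genuine powers of $x$, together with the long variable $u$) are essentially pinned down; one then reduces to counting integers built only from the small primes ($q_1,q_2$, the $r_l$'s and the bounded $p_j$ with $j>K$) together with the $w$-smooth variable $c$ in a genuinely short interval, which can be done elementarily, again using the $C$-adic intervals to limit the number of representations. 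The outcome is an estimate for $S_{\pi,2}$ of exactly the ``correct order'', i.e.\ comparable to a density-type quantity times collision factors of the shape $(1+\log(1/\theta))^{|\pi|}$. It is precisely in forcing the product of the per-$h$ count, the number $\asymp yx^{-1/2}$ of admissible shifts, the two sides, and the sum over all $2^J$ partitions to land at $\lesssim\Omega/\log x$ that $y$ must be taken as in \eqref{y}: the inequality \eqref{length} that emerges has the shape $x/y^{2}\lesssim(\log^{1-O(\delta)}x)\bigl(2(1+\log(1/\theta))/\log(1/\theta)\bigr)^{-J}$, which is exactly why the factor $2^{J/2}(1+\log(1/\theta))^{J/2}(\log(1/\theta))^{-J/2}$ appears there (the $2^{J}$ absorbing the number of partitions, the $(1+\log(1/\theta))^{J}$ the collisions, and the $(\log(1/\theta))^{-J}$ compensating the sparsity built into $\Omega$).

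Finally, in the reassembly I would sum $\sum_{\pi\sqcup\tau}(S_{\pi,1}+S_{\pi,2})^{1/2}(S_{\tau,1}+S_{\tau,2})^{1/2}$: the factors $W(\pi,\tau)^{\pm1}$ cancel in each product, and the remaining dependence on the partition is only through which $I_j$ land on which side and the split of the $r_l$ dictated by $L(\pi,\tau)$. Recombining the two sides, the local densities multiply into exactly the Euler product defining $\Omega$ — the two $w$-smooth variables $c,c'$ accounting for the $(\sum_c c^{-1})^2$, the $r_l$'s for $(\sum_r r^{-1})^L$, and $\prod_j(\sum_{p\in I_j}p^{-1})$ from the $p_j$ — while the collision factors and the count $2^J$ of partitions are swallowed by the $\log$-power and the combinatorial factor in \eqref{y}. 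The modified cases (i) and (ii) go through verbatim with the same choice of $L(\cdot,\cdot)$: case (ii) only perturbs the (harmless, $\log^{o(1)}x$) local factor at $Q_2$, and in case (i) the replacement of $W$ by $WQ_1$ exactly compensates the loss of $q_1$ from the $\pi$-side, and is negligible since $Q_1=\log^{10\delta}x$. The hardest and most delicate part throughout is the direct short-interval count in the off-diagonal sums, where one must avoid losing any power of $\log x$ while Shiu's lemma is unavailable.
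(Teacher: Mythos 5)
Your treatment of the two diagonal sums (open the square, absorb $c$ into $u$, use disjointness of the $I_j$ and Shiu's lemma to control collisions) matches the paper, and your per-$h$ idea for the off-diagonal sums is workable in spirit (the paper makes it precise by counting solutions $(c,c')$ of the linear equation $ac-bc'=h$ for fixed outer variables and then averaging the resulting gcd condition over $h$). The genuine gap is in the choice of $L(\pi,\tau)$ and in the reassembly. Lemma \ref{pa} already guarantees that the two products are each $\asymp z^{1/2}$, so your prescription ``balance the lengths'' forces $L(\pi,\tau)\approx L/2$, i.e.\ $W(\pi,\tau)=\log^{o(1)}x$ for every partition, and your claim that ``the factors $W^{\pm1}$ cancel in each product'' is true only for the pure products $S_{\pi,1}S_{\tau,1}$ and $S_{\pi,2}S_{\tau,2}$. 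For the mixed terms $(S_{\pi,1}S_{\tau,2})^{1/2}$ and $(S_{\pi,2}S_{\tau,1})^{1/2}$ nothing cancels: the bounds one can prove are of the shape $S_{\pi,1}\lesssim\frac{W(\pi,\tau)\sqrt{x}}{y}(1+\log 1/\theta)^{|\pi|}D_\pi$ with $D_\pi:=\sum_{p_j,\,j\in\pi}\prod_{j\in\pi}p_j^{-1}$ (a single power of the density, reflecting only square-root cancellation on the diagonal), while $S_{\tau,2}\lesssim D_\tau^2$ (density squared). With $W(\pi,\tau)\asymp1$ and the paper's parameters ($\log 1/\theta\approx0.47$, $\sqrt{x}/y=\log^{-\beta+O(\delta)}x$, $J\sim\log\log x/\log r$) the resulting bound for $\sum_{\pi\sqcup\tau}(S_{\pi,1}S_{\tau,2})^{1/2}$ is about $\log^{-0.15+o(1)}x$, whereas the target is $\Omega/\log x=\log^{-0.55+o(1)}x$; so with your choice of $L(\cdot,\cdot)$ the mixed terms are not controlled, and nothing in your sketch shows they are genuinely smaller. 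Your assertion that the diagonal contribution ``will in the end be dominated by the off-diagonal one'' is also backwards for partitions placing many primes on one side, which is exactly the problematic regime.

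This is precisely what the freedom in $L(\pi,\tau)$ is for in the paper: after bounding $S_{\pi,1}\lesssim E_{\pi,1}$, $S_{\pi,2}\lesssim E_{\pi,2}$, etc., one first disposes of degenerate partitions (where one of the cross products can be pushed below $\log^{-40}x$ by a choice of $L(\pi,\tau)$, using $L>200/\epsilon$, and the contribution is trivially $\le 2^J\log^{-20}x$), and for the remaining partitions chooses $L(\pi,\tau)$ so that $E_{\pi,1}E_{\tau,2}$ and $E_{\pi,2}E_{\tau,1}$ agree up to $\log^{o(1)}x$; then $(E_{\pi,1}E_{\tau,2})^{1/2}+(E_{\pi,2}E_{\tau,1})^{1/2}\lesssim(E_{\pi,1}E_{\tau,1}E_{\pi,2}E_{\tau,2})^{1/4}$ and Cauchy--Schwarz over partitions reduces the mixed terms to the two pure cases. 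A further misattribution in your bookkeeping: the off-diagonal--off-diagonal term regroups via Lemma \ref{pa} to $\lesssim z^{-1}\sum_{nmq_1q_2q_3p_1\cdots p_J\asymp z}1\lesssim\Omega/\log x$ with no condition on $y$ whatsoever; the constraint (\ref{length}) that dictates the choice (\ref{y}) of $y$ comes from the diagonal--diagonal term, which carries the factor $\sqrt{x}/y$, the collision factor $(1+\log 1/\theta)^{J/2}$, and the $2^{J/2}$ from Cauchy--Schwarz over the $2^J$ partitions, not from the per-$h$ off-diagonal count as you claim.
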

\begin{remark} The first modified case corresponds to a situation where the polynomial $Q_1(s)$ has been removed from $F(s)$. The second modified case corresponds to a situation where the polynomial $Q_2(s)$ has been replaced by $Q_1(s)^H$ in $F(s)$ (cf. definition of $F(s)$ in Lemma \ref{mellinlemma}, and the proof of Proposition \ref{mvtii} below).
\end{remark}
\emph{Proof of Lemma \ref{trivial}}.
By using $\sqrt{a+b} \leq \sqrt{a} + \sqrt{b},$ we get 
\begin{align*}
E \leq   \sum_{\pi \sqcup \tau =\{1,\dots,J\}} (S_{\pi,1}S_{\tau,1})^{1/2}   +  (S_{\pi,2}S_{\tau,2})^{1/2}   +   (S_{\pi,1}S_{\tau,2})^{1/2}   +  (S_{\pi,2}S_{\tau,1})^{1/2}.  
\end{align*}
We now estimate the four sums separately. In each case we first do the unmodified case and then explain how to get the estimate for the cases  \textbf{(i)} and   \textbf{(ii)}.

\textbf{Sum over $(S_{\pi,1}S_{\tau,1})^{1/2}:$} We need to estimate $S_{\pi,1}$ and $S_{\tau,1};$ our aim is to eventually use Cauchy-Schwarz to the sum $\sum_{\pi \sqcup \tau =\{1,\dots,J\}},$ and then regroup using Lemma \ref{pa}.   

Consider first $S_{\pi,1}.$ Since we assume that $|a_u |\lesssim \tau_w(u) 1_{(u,P(w))=1}$ and $c$ is $w$-smooth, we may estimate $uc$ from above by one smooth variable $n.$ Let
\begin{align*}
g(n):= 1+\sum_{n=m q_1 q_2q_3r_1 \cdots r_{L(\pi,\tau)}} 1.
\end{align*}
Then 
\begin{align*}
&\sum_{m \asymp  W(\pi,\tau)^{-1}\sqrt{x}} \left| \sum _{\substack{m=ucq_1q_2q_3 r_1 \dots r_{L(\pi,\tau)}\prod_{j \in \pi} p_j \\ ucq_3 \in \mathcal{I}((p_j)_{j \in \pi}) }} a_u \right|^2 \\
& \hspace{150pt}\lesssim \sum_{m \asymp  W(\pi,\tau)^{-1}\sqrt{x}} \left(\sum _{\substack{m=nq_1q_2q_3 r_1 \dots r_{L(\pi,\tau)}\prod_{j \in \pi} p_j \\ nq_3 \in \mathcal{I}((p_j)_{j \in \pi})}} \tau_w(n) \right)^2 \\
& \hspace{150pt} \lesssim  \sum_{m \asymp  W(\pi,\tau)^{-1}\sqrt{x}} \left(\sum _{\substack{m=n\prod_{j \in \pi} p_j \\   \mathcal{I}((p_j)_{j \in \pi}) \neq \emptyset  }} g(n) \tau_w(n) \right)^2, 
\end{align*}

Hence,  we need to bound
\begin{align}
\label{swapsq} \sum _{\substack{n\prod_{j \in \pi} p_j \asymp  W(\pi,\tau)^{-1}\sqrt{x} \\\mathcal{I}((p_j)_{j \in \pi}) \neq \emptyset }} \, \, \, \sum _{\substack{n\prod_{j \in \pi} p_j =n'\prod_{j \in \pi} p'_j \\ \mathcal{I}((p'_j)_{j \in \pi}) \neq \emptyset }} g(n)g(n')\tau_w(n)\tau_w(n').
\end{align}
We have $\tau_w(n') \lesssim \tau_w(n),$ since by the definition of the intervals $I_j$ if $p_j \geq w,$ then $j \ll \log \log \log x,$ so that if e.g. $p_1 \cdots p_{j} | n'$ and $p_i \nmid n'$ for other indices $i,$ then 
\begin{align*}
\tau_w(n') \leq \tau_w(p_1 \cdots p_{J_0}) \tau_w(n'/(p_1 \cdots p_{J_0})) \leq 2^{C \log \log \log x} \tau_w(n) \lesssim \tau_w(n).
\end{align*}
We also have $g(n') \lesssim g(n),$ since $g(n)-1$ counts the number of factors $q_1q_2q_3r_1\cdots r_L|n,$ and there exists only a bounded number of indices $j$ such that $p_j$ can be in the same range as one of the primes $q_1,q_2,q_3,r_l.$   Thus, (\ref{swapsq}) is bounded by
   \begin{align} \label{loss}
   \sum _{\substack{n \prod_{j \in \pi} p_j \asymp  W(\pi,\tau)^{-1}\sqrt{x} \\\mathcal{I}((p_j)_{j \in \pi}) \neq \emptyset}}  g(n)^2\tau_w(n)^2 \left( \sum _{\substack{n\prod_{j \in \pi} p_j=n'\prod_{j \in \pi} p'_j \\\mathcal{I}((p'_j)_{j \in \pi}) \neq \emptyset }}  1 \right).
   \end{align}

 To make progress, we drop the condition $\mathcal{I}((p'_j)_{j \in \pi}) \neq \emptyset.$ This causes a loss of some small power of $\log x$ but we do not know how to avoid this.  We then divide the sum into a sum over subsets $\rho \subseteq \pi,$ where $\rho$ is the set of indices $j$ such that $p_j \neq p'_j.$ Notice that this implies that $p'_j | n.$  That is, we have to give a bound for
\begin{align} \nonumber
\sum_{\substack{p_j, \, j \in \pi \\ \mathcal{I}((p_j)_{j \in \pi}) \neq \emptyset }} \sum_{\rho \subseteq \pi} \sum_{\substack{p'_j,\, j \in \rho \\ p'_j \neq p_j} } \, \,& \sum_{\substack{n \asymp  W(\pi,\tau)^{-1}\sqrt{x}/(\prod_{j\in \pi}p_j)\\  p'_j | n, \, j\in \rho }} g(n)^2 \tau_w(n)^2 \\ \label{gsum}
& \, \lesssim \, \sum_{\substack{p_j, \, j \in \pi \\ \mathcal{I}((p_j)_{j \in \pi}) \neq \emptyset }} \sum_{\rho \subseteq \pi} \sum_{p'_j, \, j \in \rho} \, \, \sum_{\substack{n \asymp  W(\pi,\tau)^{-1}\sqrt{x}/(\prod_{j\in \pi}p_j \prod_{j\in  \rho}p'_j) }} g(n)^2 \tau_w(n)^2, 
\end{align}
since $g(n)  \lesssim g(n/\prod_{j\in  \rho}p'_j)$ and $\tau_w(n) \lesssim \tau_w(n/\prod_{j\in  \rho}p'_j).$ We have $g(n) \leq L! \tilde{g}(n),$ where $\tilde{g}(n)$ is the multiplicative function defined by
\begin{align*}
\tilde{g}(p^k) :=  \begin{cases}  (k+1), & p \sim \log^{\epsilon} x, \, p \sim Q_1,   \, \,  p \sim Q_2, \,\, \text{or} \,\, \,  Q_3^{1/2} \leq p \leq Q_3 \\
 1, &\text{otherwise},  \end{cases}.
\end{align*}
 Thus, by Lemma  \ref{shiu} the sum (\ref{gsum}) is bounded by
\begin{align} \nonumber
&  (L!)^2 \sum_{\substack{p_j, \, j \in \pi \\ \mathcal{I}((p_j)_{j \in \pi}) \neq \emptyset }} \sum_{\rho \subseteq \pi} \sum_{p'_j, \, j \in \rho} \, \, \sum_{\substack{n \asymp  W(\pi,\tau)^{-1}\sqrt{x}/(\prod_{j\in \pi}p_j\prod_{j\in  \rho}p'_j)\\   }} \tilde{g}(n)^2 \tau_w(n)^2 \\ \nonumber
 & \quad  \ll  W(\pi,\tau)^{-1}\sqrt{x} \prod_{p \leq x} \left( 1+ \frac{\tilde{g}(p)^2\tau_w(p)^2-1}{p}\right)  \hspace{-10pt} \sum_{\substack{p_j, \, j \in \pi \\ \mathcal{I}((p_j)_{j \in \pi}) \neq \emptyset }}\hspace{-12pt} \sum_{\rho \subseteq \pi} \sum_{p'_j, \, j \in \rho} \left(\prod_{j \in \pi} p_j^{-1} \right)\left(\prod_{j \in \rho} (p_j')^{-1} \right) \\ \label{sumw}
 & \hspace{20pt}  \lesssim  W(\pi,\tau)^{-1}\sqrt{x} \prod_{j\in \pi} \left( 1+ \sum_{p \in I_j} p^{-1} \right)  \sum_{\substack{p_j, \, j \in \pi \\ \mathcal{I}((p_j)_{j \in \pi}) \neq \emptyset  }}  \left(\prod_{j \in \pi} p_j^{-1} \right). 
\end{align}
Recall now (by definitions) that for any $\alpha$  the set $ \mathcal{I}(\alpha,(p_j)_{j \in \pi})$ is always either empty or a $C$-adic interval around  $\sqrt{z}/(Q_1Q_2e^{\alpha}\prod_{j\in \pi} p_j)$ for some $C \geq e,$ and that $\mathcal{I}(\alpha,(p_j)_{j \in \pi})= \emptyset$ if and only if $\mathcal{I}((p_j)_{j \in \pi})= \emptyset.$  Recall also (\ref{smallprimes}) for the contributions of the small primes $q_1,q_2,q_3.$ Then (\ref{sumw}) is bounded by (using the definition (\ref{interval}) of  $\mathcal{I}((p_j)_{j \in \pi})$)
\begin{align*}
 \frac{W(\pi,\tau)^{-1}\sqrt{x}}{\sqrt{z}}(1+ \log 1/\theta)^{|\pi|} \hspace{-5pt} \sum_{\substack{n q_1q_2q_3 \prod_{j\in \pi} p_j \asymp  \sqrt{z} \\ n \in\mathcal{I}(\lfloor \log q_3 \rfloor,(p_j)_{j \in \pi})}} 1,
\end{align*}
so that
\begin{align} \label{case11}
S_{\pi,1} \, \lesssim \, \frac{W(\pi,\tau)\sqrt{x}}{y\sqrt{z}}(1+ \log 1/\theta)^{|\pi|} \hspace{-5pt} \sum_{\substack{n q_1q_2q_3 \prod_{j\in \pi} p_j \asymp  \sqrt{z} \\ n \in \mathcal{I}(\lfloor \log q_3 \rfloor,(p_j)_{j \in \pi})}} 1.
\end{align}
Similarly, we obtain 
\begin{align} \label{case11t}
S_{\tau,1} \, \lesssim \frac{ \sqrt{x} }{yW(\pi,\tau)\sqrt{z}} (1+ \log 1/\theta)^{|\tau|} \hspace{-5pt} \sum_{\substack{m \prod_{j\in \tau} p_j \asymp  \sqrt{z} \\ m \in \mathcal{I}((p_j)_{j \in \tau}) }} 1.
\end{align}
Hence, by Cauchy-Schwarz, and by applying Lemma \ref{pa}  to regroup
\begin{align}
\nonumber  \sum_{\substack{\pi \sqcup \tau =\{1,\dots,J\} }} & (S_{\pi,1}S_{\tau,1})^{1/2}  \\ \nonumber &  \lesssim \, \frac{\sqrt{x}}{y\sqrt{z}} (1+ \log 1/\theta)^{J/2}2^{J/2}   \left(\sum_{\substack{\pi \sqcup \tau =\{1,\dots,J\}}}\sum_{\substack{n q_1q_2q_3  \prod_{j\in \pi} p_j \asymp \sqrt{z} \\ n \in \mathcal{I}(\lfloor \log q_3 \rfloor,(p_j)_{j \in \pi})}} \, \,  \sum_{\substack{m \prod_{j\in \tau} p_j \asymp  \sqrt{z}  \\ m \in \mathcal{I}((p_j)_{j \in \tau}) }} 1 \right)^{1/2} \\ \nonumber 
&\leq  \frac{\sqrt{x}}{y\sqrt{z}} (1+ \log 1/\theta)^{J/2}2^{J/2} \left( \sum_{nmq_1q_2q_3 p_1 \cdots p_J \asymp z}  1\right)^{1/2}  \\  \nonumber 
& \lesssim \frac{\sqrt{x}}{y} (1+ \log 1/\theta)^{J/2}2^{J/2}   \left(\frac{\Omega}{\log x} \right)^{1/2} \\ \label{length}
&  \lesssim  \frac{\sqrt{x}}{y} (1+ \log 1/\theta)^{J/2}2^{J/2} (\log1/\theta)^{J/2} \log^{1/2} x \lesssim \frac{\Omega}{\log x},
\end{align}
where the last bound follows from the definition (\ref{y}) of $y$ and the definition (\ref{omega}) of $\Omega.$  We now discuss the modified cases:

  \textbf{Modified case (i):} There are two changes: Firstly, the  function $g(n)$ gets replaced by 
\begin{align*}
f(n) :=  1+\sum_{n=m q_2q_3r_1 \cdots r_{L(\pi,\tau)}} 1,
\end{align*}
which clearly satisfies $f(n) \ll \tilde{g}(n).$ Secondly, since the factor $W(\pi,\tau)$ is replaced by $W(\pi,\tau)Q_1,$ the final bound we obtain is
\begin{align*}
\sum_{\substack{\pi \sqcup \tau =\{1,\dots,J\}}}  (S_{\pi,1}S_{\tau,1})^{1/2} \, \lesssim \,   \frac{Q_1 x^{1/2}}{y} (1+ \log 1/\theta)^{J/2}2^{J/2} (\log1/\theta)^{J/2} \log^{1/2} x. 
\end{align*}
Since $Q_1=\log^{10\delta} x$ and $y= x^{1/2}\log^{\beta +10\delta + o(1)} x,$ the claim follows.

\textbf{Modified case (ii):} Here the only change is that the function $g(n)$ is replaced by 
\begin{align*}
f(n)  :=1+\sum_{n=n' q_1q_{1,1} \cdots q_{1,H}q_3 r_1 \dots r_{L(\pi,\tau)}} 1  \, \leq \, 2(H+1)! L! \tilde{g}(n) \lesssim \tilde{g}(n),
\end{align*}
since $H \ll (\log \log x)^{1/2}.$ Thus, the claim follows.

\textbf{Sum over $(S_{\pi,2}S_{\tau,2})^{1/2}:$} We need to estimate $S_{\pi,2}$ and $S_{\tau,2}.$ These sums are essentially averages over $h$ of correlations of some sequence $g_n$, of the form $N^{-1}\sum_{n\sim N} g_n g_{n+h}.$ The aim is to show that these correlations behave as expected, so that we get (on average over $h$) 
\begin{align*}
N^{-1} \sum_{n \sim N} g_n g_{n+h} \, \lesssim  \left(N^{-1} \sum_{n \sim N} g_n\right)^2.
\end{align*}
Then the square cancels the square root in $(S_{\pi,2}S_{\tau,2})^{1/2}$, so that we may use the partition algorithm to regroup the sums, giving us an estimate of the correct order.

Consider first $S_{\pi,2}$.  Note that if $ yx^{-1/2} W(\pi,\tau)^{-1}  < 1,$ then the bound is trivial, so assume the opposite. Recall that for any integers $a,b,h$ such that $(a,b) | h$, the number of integer solutions $(c,c')$ to the diophantine equation $ac+bc'=h$ with $ac \sim M$ is bounded by
\begin{align*}
\sum_{\substack{c \sim M/a\\ x \equiv c_0 \, (\text{mod} \,b/(a,b))}} 1 \, \ll 1+\frac{M (a,b)}{ab},
\end{align*}
 where $(c_0,c'_0)$ is any given solution to the equation. Hence, for a fixed $h$ we have (here we drop the usual restrictions for $c,c',$ and count the number of solutions $(c,c')$)
\begin{align}
 \nonumber &\sum_{\substack{ucq_1q_2q_3 r_1 \dots r_{L(\pi,\tau)}\prod_{j \in \pi} p_j -u'c'q'_1q'_2q'_3 r'_1 \dots r'_{L(\pi,\tau)}\prod_{j \in \pi} p'_j = h \\ ucq_3 \in \mathcal{I}((p_j)_{j \in \pi}) \\ u'c'q'_3 \in \mathcal{I}((p'_j)_{j \in \pi})}} |a_u||a_{u'}| \\
 & \nonumber \hspace{10pt} \leq \sum_{u,u'} |a_u||a_{u'}|\sum_{q_i,q_i'} \sum_{r_l,r_l'} \sum_{\substack{p_j, p'_j \, j \in \pi \\ \mathcal{I}((p_j)_{j \in \pi}) \neq \emptyset \\ \mathcal{I}((p'_j)_{j \in \pi}) \neq \emptyset }} \, \, \sum_{\substack{c,c' \\  cuq_1q_2q_3 r_1 \dots r_{L(\pi,\tau)}\prod_{j \in \pi} p_j \asymp W(\pi,\tau)^{-1} x^{1/2}  \\ cuq_1q_2q_3 r_1 \dots r_{L(\pi,\tau)}\prod_{j \in \pi} p_j  -c'u'q'_1q'_2q'_3 r'_1 \dots r'_{L(\pi,\tau)}\prod_{j \in \pi} p'_j = h}} \hspace{-10pt}1 \\
 &\label{s2} \hspace{10pt} \lesssim 1\,+\, W(\pi,\tau)^{-1} x^{1/2} \sum_{u,u'} \frac{|a_u||a_{u'}|}{uu'}\sum_{q_i,q_i'} (q_1q_2q_3q'_1q'_2q'_3)^{-1}  \\ 
 & \nonumber\hspace{110pt} \sum_{r_l,r_l'} (r_1 \cdots r_{L(\pi,\tau)}r'_1 \cdots r'_{L(\pi,\tau)})^{-1}\sum_{\substack{p_j, \, j \in \pi \\ \mathcal{I}((p_j)_{j \in \pi}) \neq \emptyset }}\sum_{\substack{p'_j, \, j \in \pi \\ \mathcal{I}((p'_j)_{j \in \pi}) \neq \emptyset}} 
  \frac{G \cdot 1_{G|h} }{\prod_{j\in \pi} p_jp'_j}
\end{align}
where $G:=\gcd\left(uq_1q_2q_3r_1 \cdots r_{L(\pi,\tau)} \prod_{j \in \pi}p_j, \, \, u'q'_1q'_2q'_3r'_1 \cdots r'_{L(\pi,\tau)} \prod_{j \in \pi}p'_j\right).$ The contribution from the `$+1$' is trivially small enough, so that we may ignore it. Averaging over $h$ we have
\begin{align*}
\frac{W(\pi,\tau) x^{1/2}}{y}\sum_{h=1}^{\lfloor yx^{-1/2} W(\pi,\tau)^{-1}  \rfloor} G \cdot 1_{G|h}  \, \ll 1.
\end{align*}
Since $|a_u| \lesssim 1_{(u,P(w))=1} \tau_w(u),$ we have
\begin{align*}
 \sum_{u,u'} \frac{|a_u||a_{u'}|}{uu'}\, \lesssim \, 1.
\end{align*}
Hence,
\begin{align} \label{case22}
S_{\pi,2} \, & \lesssim \left(  \sum_{\substack{p_j, \, j \in \pi \\\mathcal{I}((p_j)_{j \in \pi}) \neq \emptyset}} \prod_{j \in \pi} p_j^{-1} \right)^2 \, \lesssim \, \left(  \frac{1}{ \sqrt{z}} \sum_{\substack{n q_1q_2q_3 \prod_{j\in \pi} p_j \asymp \sqrt{z} \\ n \in \mathcal{I}(\lfloor \log q_3 \rfloor,(p_j)_{j \in \pi})}} 1  \right)^2,
\end{align}
where the last bound again follows  from the facts that $ \mathcal{I}(\alpha,(p_j)_{j \in \pi})$ is always either empty or a $C$-adic interval around  $\asymp W(\pi,\tau)^{-1}\sqrt{x}/( Q_1Q_2e^{\alpha}\prod_{j\in \pi} p_j)$ for some $C \geq e,$ and that for all any $\alpha$ we have $\mathcal{I}(\alpha,(p_j)_{j \in \pi})= \emptyset$ if and only if $\mathcal{I}((p_j)_{j \in \pi})= \emptyset.$

Similarly, we obtain
\begin{align} \label{case22t}
S_{\tau,2} \, \lesssim  \left(  \frac{1}{ \sqrt{z}} \sum_{\substack{m  \prod_{j\in \tau} p_j \asymp  \sqrt{z} \\ m \in \mathcal{I}'((p_j)_{j \in \tau}) }} 1  \right)^2.
\end{align}
 Thus, by using Lemma \ref{pa} to regroup we obtain
\begin{align*}
\sum_{\pi \sqcup \tau =\{1,\dots,J\}} (S_{\pi,2}S_{\tau,2})^{1/2} \lesssim  \frac{1}{z} \sum_{nmq_1q_2q_3 p_1 \cdots p_J  \asymp z} 1 \lesssim \frac{\Omega}{\log x}.
\end{align*}
The modified cases \textbf{(i)} and \textbf{(ii)} clearly follow by a similar argument, since in \textbf{(i)} every relevant factor is scaled similarly throughout by factor $Q_1$ or $Q_1^{-1}$ (note the averaging over $h$), and the modification in \textbf{(ii)} is harmless since $H \ll (\log \log x)^{1/2}$ so that $\left( \sum_{q \sim Q_1} q^{-1}\right)^H = \log^{o(1)} x.$

\textbf{Sum over $(S_{\pi,1}S_{\tau,2})^{1/2} +(S_{\pi,2}S_{\tau,1})^{1/2} :$}  Let us denote 
\begin{align*}
E_{\pi,1} &:= \frac{ W(\pi,\tau)\sqrt{x}}{y} (1+\log 1/\theta)^{|\pi |} \sum_{\substack{p_j, \, j \in \pi \\\mathcal{I}((p_j)_{j \in \pi}) \neq \emptyset}} \prod_{j \in \pi} p_j^{-1} , \quad  E_{\pi,2} := \left(  \sum_{\substack{p_j, \, j \in \pi \\\mathcal{I}((p_j)_{j \in \pi}) \neq \emptyset}} \prod_{j \in \pi} p_j^{-1} \right)^2, \\
E_{\tau,1}&:=  \frac{\sqrt{x}}{ W(\pi,\tau) y} (1+\log 1/\theta)^{|\tau |} \sum_{\substack{p_j, \, j \in \tau \\\mathcal{I}'((p_j)_{j \in \tau}) \neq \emptyset}} \prod_{j \in \tau} p_j^{-1}, \quad E_{\tau,2}:=\left(  \sum_{\substack{p_j, \, j \in \tau \\\mathcal{I}'((p_j)_{j \in \tau}) \neq \emptyset}} \prod_{j \in \tau} p_j^{-1} \right)^2
\end{align*}
so that by (\ref{case11}), (\ref{case11t}), (\ref{case22}), and (\ref{case22t}) we have 
\begin{align*}
S_{\pi,1} \lesssim E_{\pi,1}, \quad \quad S_{\pi,2} \lesssim E_{\pi,2}, \quad \quad S_{\tau,1} \lesssim E_{\tau,1} \quad \quad S_{\tau,2} \lesssim E_{\tau,2}.
\end{align*}
Our strategy here is to choose $L(\pi,\tau)$ so that $E_{\pi,1}E_{\tau,2} \approx E_{\pi,2}E_{\tau,1},$ and then use Cauchy-Schwarz to reduce the estimate to the previous cases.

We must first deal separately with partitions $\pi\sqcup \tau$ such that one of $E_{\pi,i}, E_{\tau,i}$ is exceptionally small. We note that trivially (since $(\log 1/\theta)(1+\log 1/\theta) <1$)
\begin{align*}
 E_{\pi,1}W(\pi, \tau)^{-1} \lesssim ((\log 1/\theta)(1+\log 1/\theta))^{|\pi|} \leq 1,
\end{align*}
and similarly $E_{\tau,1} W(\pi,\tau) \lesssim 1.$  Note also that $E_{\pi,2} \lesssim 1, \, E_{\tau,2} \lesssim 1.$ Suppose then that $\pi \sqcup \tau$ is such that 
\begin{align*}
\min \left \{E_{\pi,1} E_{\tau,2} W(\pi, \tau)^{-1}, \quad E_{\pi,2}E_{\tau,1}W(\pi, \tau)  \right \} \leq \log^{-100}x
\end{align*}
Recall that $W(\pi,\tau)=  (\log x)^{\epsilon (L(\pi,\tau)-L/2) }.$ If  $L > 100/\epsilon,$ then we may clearly choose $L(\pi,\tau)$ so that  $E_{\pi,1}E_{\tau,2}$ and $ E_{\pi,2}E_{\tau,1}$ are both bounded by $\log^{-40}x.$  Thus, the sum over such $\pi\sqcup \tau$ is trivially bounded by $2^J \log^{-20} x < \log^{-10} x,$ which is clearly sufficient.

Hence, we may assume that 
\begin{align*}
 E_{\pi,1}E_{\tau,2}W(\pi,\tau)^{-1}  \quad \text{and} \quad E_{\pi,2}E_{\tau,1}W(\pi,\tau)
\end{align*}
 are within a factor of $\log^{100} x$ of each other.  Choose $L$ large enough so that $L>200/\epsilon.$ We may then choose $L(\pi,\tau)$ so that $E_{\pi,1}E_{\tau,2}$ and $ E_{\pi,2}E_{\tau,1}$ are equal up to a factor bounded by $\log^{\epsilon} x = \log^{o(1)} x.$ We then obtain by Cauchy-Schwarz
\begin{align*}
\sum_{\pi \sqcup \tau =\{1,\dots,J\}}   (E_{\pi,1}E_{\tau,2})^{1/2}   + & (E_{\pi,2}E_{\tau,1})^{1/2}  \lesssim \sum_{\pi \sqcup \tau =\{1,\dots,J\}}   (E_{\pi,1}E_{\tau,1} E_{\pi,2}E_{\tau,2})^{1/4} \\
& \leq \left( \sum_{\pi \sqcup \tau =\{1,\dots,J\}}  ( E_{\pi,1}E_{\tau,1})^{1/2}\right)^{1/2} \left( \sum_{\pi \sqcup \tau =\{1,\dots,J\}}   (E_{\pi,2}E_{\tau,2})^{1/2}\right)^{1/2},
\end{align*}
which reduces the bound to the previous cases. The modified cases \textbf{(i)} and \textbf{(ii)} again follow by a similar argument.
\qed

We also require the following variant of the above lemma, which will be used after applying Lemma \ref{halasz}. Here we care about the partition only up to an accuracy of $x^{o(1)},$ since we will apply Lemma \ref{halasz} with $T^{1/2}|\mathcal{T}| \ll (x/y)^{1-\epsilon}.$
\begin{lemma} \label{trivial2} For any partition $\pi \sqcup \tau = \{1,2,\dots, J\},$ and any  $\lfloor \log^{9/10} x \rfloor  \leq \alpha < 2 \lfloor \log^{9/10} x \rfloor,$ define
\begin{align*}
S_{\pi} (\alpha)&:= \sum_{m \asymp e^{-\alpha}(\log^{L\epsilon/2} x) \sqrt{x}} \left| \quad \sum _{\substack{m=ucq_1q_2 r_1 \dots r_{L}\prod_{j \in \pi} p_j \\ uc \in \mathcal{I}(\alpha,(p_j)_{j \in \pi})}} a_u \right|^2, \\
S_{\tau} &:=  \sum_{m \asymp  (\log^{-L\epsilon/2} x)\sqrt{x}} \left| \quad \sum _{\substack{m=vc\prod_{j \in \tau} p_j \\  vc \in \mathcal{I}'((p_j)_{j \in \tau}) }} b_v \right|^2.
\end{align*}
Then, for any $\lfloor \log^{9/10} x \rfloor  \leq \alpha < 2 \lfloor \log^{9/10} x \rfloor$ and partition $\pi \sqcup \tau = \{1,2,\dots, J\},$
\begin{align*}
\frac{e^{\alpha}}{x}S_{\pi} (\alpha) S_{\tau} \, \lesssim \, 1.
\end{align*}
\end{lemma}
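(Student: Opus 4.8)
The plan is to run, for the single fixed partition $\pi\sqcup\tau$ and the single fixed $\alpha$, essentially the same computation that produces the bounds for $S_{\pi,1}$ and $S_{\tau,1}$ in the proof of Lemma \ref{trivial}. The key simplification is that, since we only want an $x^{o(1)}$ bound, we may at the end simply discard the nonemptiness conditions on $\mathcal{I}(\alpha,(p_j)_{j\in\pi})$ and $\mathcal{I}'((p_j)_{j\in\tau})$; in particular no regrouping of partitions (Lemma \ref{pa}) is needed.

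First I would bound $S_\pi(\alpha)$. Since $|a_u|\lesssim\tau_w(u)1_{(u,P(w))=1}$ and $c$ is $w$-smooth, the product $uc$ may be replaced by a single integer variable carrying weight $\lesssim\tau_w$; absorbing also the small primes $q_1,q_2,r_1,\dots,r_L$ into one variable $n$ (so that $m=n\prod_{j\in\pi}p_j$) and setting $g(n):=1+\sum_{n=n'q_1q_2r_1\cdots r_L}1$ gives
\begin{align*}
S_\pi(\alpha)\ \lesssim\ \sum_{m\asymp e^{-\alpha}(\log^{L\epsilon/2}x)\sqrt{x}}\left(\sum_{\substack{m=n\prod_{j\in\pi}p_j\\ \mathcal{I}(\alpha,(p_j)_{j\in\pi})\neq\emptyset}}g(n)\tau_w(n)\right)^{2}.
\end{align*}
Expanding the square and arguing exactly as in the proof of Lemma \ref{trivial} --- bounding $g(n')\tau_w(n')\lesssim g(n)\tau_w(n)$ on the diagonal $n\prod_{j\in\pi}p_j=n'\prod_{j\in\pi}p'_j$ (there are only $\mathcal{O}(\log\log\log x)$ indices $j$ with $p_j>w$ and only $\mathcal{O}(1)$ indices $j$ for which $I_j$ meets a range of one of $q_1,q_2,r_l$), dropping the condition $\mathcal{I}(\alpha,(p'_j)_{j\in\pi})\neq\emptyset$, splitting over subsets $\rho\subseteq\pi$ with $p_j\neq p'_j$ (so $p'_j\mid n$), bounding $g\le L!\,\tilde{g}$ by the multiplicative majorant $\tilde{g}$ of Lemma \ref{trivial}, and invoking Shiu's bound (Lemma \ref{shiu}) together with $\prod_{p\le x}(1+(\tilde{g}(p)^2\tau_w(p)^2-1)/p)\lesssim1$ (valid since $\tau_w(p)=2$ only for $p>w$ while $\log x/\log w=(\log\log x)^2$, and by (\ref{smallprimes})) --- I obtain
\begin{align*}
S_\pi(\alpha)\ \lesssim\ e^{-\alpha}(\log^{L\epsilon/2}x)\sqrt{x}\,(1+\log1/\theta)^{|\pi|}\sum_{\substack{(p_j)_{j\in\pi}\\ \mathcal{I}(\alpha,(p_j)_{j\in\pi})\neq\emptyset}}\ \prod_{j\in\pi}p_j^{-1}.
\end{align*}
The identical (and simpler, $S_\tau$ having no small primes) argument gives
\begin{align*}
S_\tau\ \lesssim\ (\log^{-L\epsilon/2}x)\sqrt{x}\,(1+\log1/\theta)^{|\tau|}\sum_{\substack{(p_j)_{j\in\tau}\\ \mathcal{I}'((p_j)_{j\in\tau})\neq\emptyset}}\ \prod_{j\in\tau}p_j^{-1}.
\end{align*}

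Multiplying these, using $\tfrac{e^\alpha}{x}\cdot e^{-\alpha}(\log^{L\epsilon/2}x)\sqrt{x}\cdot(\log^{-L\epsilon/2}x)\sqrt{x}\asymp1$, and then discarding the conditions $\mathcal{I}\neq\emptyset$ and $\mathcal{I}'\neq\emptyset$, I would reach
\begin{align*}
\frac{e^\alpha}{x}\,S_\pi(\alpha)\,S_\tau\ \lesssim\ (1+\log1/\theta)^{J}\prod_{j=1}^{J}\sum_{p\in I_j}p^{-1}\ \lesssim\ \bigl((1+\log1/\theta)\log1/\theta\bigr)^{J},
\end{align*}
the last step being the Prime Number Theorem computation of $\Omega$ in (\ref{omega}) (recall $\omega^{r^{-J}}\in[K^{1/r},K]$). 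Since $\theta=r-1+\epsilon$ with $r>(1+\sqrt5)/2$, we have $1/\theta<(1+\sqrt5)/2$, hence $\log1/\theta<(\sqrt5-1)/2$ and therefore $(1+\log1/\theta)\log1/\theta<1$; thus $\bigl((1+\log1/\theta)\log1/\theta\bigr)^{J}\le1$, giving the lemma. The only point I would check with care is precisely this inequality $(1+\log1/\theta)\log1/\theta<1$: it is what converts the genuine power of $\log x$ concealed in $J\asymp\log\log x/\log r$ into a harmless factor, and it holds for the same golden-ratio reason ($r>(1+\sqrt5)/2$) that makes the intervals $I_j$ disjoint.
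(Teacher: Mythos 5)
Your proposal is correct and is essentially the paper's own proof: the paper likewise bounds $S_{\pi}(\alpha)$ and $S_{\tau}$ by rerunning the first case of the argument for Lemma \ref{trivial} (without any regrouping via Lemma \ref{pa}, dropping the nonemptiness conditions) and then concludes from $(\log 1/\theta)(1+\log 1/\theta)<1$. Your explicit verification of that last inequality via $r>(1+\sqrt5)/2$ is a detail the paper leaves implicit, but the route is the same.
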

\begin{proof}
We use the same argument as with the first case in the proof of Lemma \ref{trivial} (compare with (\ref{case11}) and (\ref{case11t})) to obtain
\begin{align*}
 S_{\pi} (\alpha)  \, &\lesssim \, e^{-\alpha}(\log^{L\epsilon/2} x) \sqrt{x} \,  (1+\log 1/\theta)^{|\pi |}    \sum_{\substack{p_j, \, j \in \pi }} \prod_{j \in \pi} p_j^{-1} \, \lesssim \, e^{-\alpha} (\log^{L\epsilon/2} x)\sqrt{x} \\
S_{\tau}  &\lesssim \,(\log^{-L\epsilon/2} x)\sqrt{x} \,(1+ \log 1/\theta)^{|\tau|} \sum_{\substack{p_j, \, j \in \tau}} \prod_{j \in \tau} p_j^{-1} \, \lesssim \, (\log^{-L\epsilon/2} x)\sqrt{x},
\end{align*}
since $(\log 1/\theta)(1+\log 1/\theta) < 1$.
\end{proof}

\emph{Proof of Proposition \ref{mvtii}.}
Write 
\begin{align*}
[T_0,x/y] = \mathcal{T}_1 \cup \mathcal{T}_2 \cup \mathcal{U},
\end{align*}
where
\begin{align*}
\mathcal{T}_1 &:= \{t \in [T_0,x/y]: |Q_1(1+it)| \leq Q_1^{-1/4+2\epsilon} \}, \\
\mathcal{T}_2 &:= \{t \in [T_0,x/y]: |Q_2(1+it)| \leq Q_2^{-1/4+\epsilon} \} \setminus \mathcal{T}_1,
\end{align*}
and  $\mathcal{U} := [T_0,x/y]\setminus (\mathcal{T}_1 \cup \mathcal{T}_2).$ We estimate the integral over each region separately.

\textbf{Integral over $\mathcal{T}_1 $:} We have (for $s=1+it$)
\begin{align*}
\int_{\mathcal{T}_1} |F(s)| dt \leq Q_1^{-1/4+2\epsilon}  \sum_{\pi \sqcup \tau = \{1,2,\dots J\}}  \int_{T_0}^{x/y}   \left| R(s)^L G_{\tau}(s) Q_2(s) \sum_{\alpha} \sum_{q \sim e^{\alpha}} q^{-s}F_{\pi,\alpha}(s)  \right | \, dt.
\end{align*}
 Choose $L(\pi,\tau)$ as in Lemma \ref{trivial}, and use Cauchy-Schwarz to obtain
\begin{align*}
\int_{T_0}^{x/y}    & \left| R(s)^{L-L(\pi,\tau)}G_{\tau}(s)R(s)^{L(\pi,\tau)}  Q_2(s)  \sum_{\alpha} \sum_{q \sim e^{\alpha}}  q^{-s}F_{\pi,\alpha}(s)  \right | \, dt \leq \\
& \left( \int_{T_0}^{x/y}    \left| R(s)^{L(\pi,\tau)} Q_2(s) \sum_{\alpha} \sum_{q \sim e^{\alpha}} q^{-s}F_{\pi,\alpha}(s)  \right |^2 \, dt\right)^{1/2} \left( \int_{T_0}^{x/y}    \left| R(s)^{L-L(\pi,\tau)}G_{\tau}(s) \right |^2 \, dt \right)^{1/2}.
\end{align*}
We now apply  Lemma \ref{mvt2}  and the modified case \textbf{(i)} of Lemma \ref{trivial} to obtain
\begin{align*}
\int_{\mathcal{T}_1} |F(s)| \, dt \lesssim   Q_1^{-1/4+2\epsilon} \frac{\Omega}{\log x} \ll (\log^{-\delta} x ) \frac{\Omega}{\log x}.
\end{align*}

\textbf{Integral over $\mathcal{T}_2$:} Since $|Q_1(1+it)| > Q_1^{-1/4+2\epsilon}$ on $\mathcal{T}_2,$ we obtain (for $s=1+it$)
\begin{align*}
\int_{\mathcal{T}_2}|F(s)| dt  \leq  Q_2^{-1/4+\epsilon} Q_1^{H(1/4-2\epsilon)} \sum_{\pi \sqcup \tau = \{1,2,\dots J\}}  \int_{T_0}^{x/y}    \left| R(s)^L G_{\tau}(s) Q_1(s)^{H+1} \sum_{\alpha} \sum_{q \sim e^{\alpha}} q^{-s}F_{\pi,\alpha}(s) \right | \, dt,
\end{align*}
where $Q_2^{-1/4+\epsilon} Q_1^{H(1/4-2\epsilon)} = Q_2^{-\epsilon} \leq \log^{-2\delta} x.$ By the same argument as with the integral over $\mathcal{T}_1$, applying the modified case \textbf{(ii)} of Lemma \ref{trivial}, we obtain
 \begin{align*}
\int_{\mathcal{T}_2} |F(s)| \, dt \lesssim  (\log^{-\delta} x ) \frac{\Omega}{\log x}.
\end{align*}

\textbf{Integral over $\mathcal{U}$:} Let $\mathcal{T} \subset \mathcal{U}$ be a set of well-spaced points such that
\begin{align*}
\int_{\mathcal{U}} |F(1+it)| \, dt \ll \sum_{t\in \mathcal{T}} |F(1+it)|.
\end{align*}
By Lemma \ref{large} applied to $Q_2(s)$ we have
\begin{align*}
|\mathcal{T}| \, \ll (x/y)^{1/2 - 2\epsilon }Q_2^{1/2}\exp \left( 2\log (x/y) \frac{\log \log (x/y)}{\log Q_2}\right) \leq (x/y)^{1/2-\epsilon},
\end{align*}
since $Q_2 \sim \exp((\log\log x)^{3/2}).$ By Lemma \ref{vino} we have for any  $\lfloor \log^{9/10} x \rfloor  \leq \alpha < 2 \lfloor \log^{9/10} x \rfloor$ and any $t \in [\log^{50} x,x]$
\begin{align*}
\left | \sum_{q\sim e^{\alpha}} q^{-1-it} \right | \ll  \exp \left( -\frac{\alpha}{\log^{7/10} t}\right) + \frac{ \alpha^3 }{t} \ll \log^{-40} x.
\end{align*}
Hence, by Cauchy-Schwarz (for $s=1+it$)
\begin{align*}
\sum_{t\in \mathcal{T}} |F(1+it)| &\ll \log^{-39} x  \sum_{\pi \sqcup \tau = \{1,2,\dots J\}} \max_{\alpha} \sum_{t\in \mathcal{T}}  \left| R(s)^LQ_1(s)Q_2(s) F_{\pi,\alpha}(s) G_{\tau}(s) \right | \\
& \leq \log^{-30} x  \max_{\pi \sqcup \tau } \max_{\alpha} \left( \sum_{t\in \mathcal{T}}  \left|R(s)^L Q_1(s)Q_2(s) F_{\pi,\alpha}(s)  \right |^2 \right)^{1/2}\left( \sum_{t\in \mathcal{T}}  \left|  G_{\tau}(s) \right |^2 \right)^{1/2}.\\
\end{align*}
Thus, applying Lemma \ref{halasz} (with $T=x/y, \, |\mathcal{T}| \, \ll (x/y)^{1/2-\epsilon} $) we obtain 
\begin{align*}
\sum_{t\in \mathcal{T}} |F(1+it)| &\ll  (\log^{-30} x) \max_{\pi \sqcup \tau } \max_{\alpha} \left( \left( \frac{e^{\alpha} \log^{L\epsilon/2} x}{\sqrt{x}} +  e^{2\alpha} (\log^{L\epsilon} x) x^{-1/2-\epsilon} \right) S_{\pi} (\alpha) \right)^{1/2}  \\
& \hspace{180pt} \cdot  \left( \left( \frac{ 1}{(\log^{L\epsilon/2} x)\sqrt{x}} +  \frac{x^{-1/2-\epsilon}}{\log^{L\epsilon} x}  \right) S_{\tau}  \right)^{1/2} ,
\end{align*}
where $S_{\pi} (\alpha)$ and $ S_{\tau}$ are as in Lemma \ref{trivial2}. Thus, by Lemma \ref{trivial2} 
\begin{align*}
\sum_{t\in \mathcal{T}} |F(1+it)| \ll (\log^{-30} x) \max_{\pi \sqcup \tau } \max_{\alpha}  \left(\frac{e^{\alpha}}{x} S_{\pi} (\alpha) S_{\tau} \right)^{1/2} \ll \log^{-20} x,
\end{align*}
which is sufficient for the proposition.
\qed

\emph{Conclusion of the proof of Proposition \ref{typeiiestimate}.}
Recall that
\begin{align*}
\left | \frac{1}{y}V_{\mathcal{A}}-\frac{\log^{100} x}{x}V_{\mathcal{B}} \right | \lesssim \int_{T_0}^{x/y}  |F(1+it)| \, dt +& \frac{x}{y} \max_{T > x/y} \frac{1}{T} \int_{1+iT}^{1+i2T} |F(1+it)| \, dt \, +\\
& + (\log^{100} x) \max_{T > \log^{100} x} \frac{1}{T} \int_{1+iT}^{1+i2T} |F(1+it)| \, dt.
\end{align*}
The first integral can be bounded using Proposition \ref{mvtii}. The second term is bounded by using the same argument as for the first, since the integral is multiplied by the factor $T^{-1}x/y$, and the sum over $h$ in the off-diagonal term from applying Lemma \ref{mvt2} is now shorter than with the first integral. The third term is trivially bounded by  the sum of the first and second terms.
\qed

\subsection{Discussion of the loss}
As can be seen from the above, the reason we cannot make the interval shorter than $(\log^{\beta + \delta} x )\sqrt{x}$ is due to losses in the correct-order estimate Lemma \ref{trivial}. To see how this loss occurs, consider a sequence $a_n, \, n\sim N,$  which is the characteristic function of some well-behaved set of density $\rho$ around $N.$ Then we expect
\begin{align*}
\frac{1}{N} \sum_{\substack{ nm \in [N^2,N^2+N] \\ n,m\sim N}} a_na_m \asymp \rho^2,
\end{align*}
but estimate for the corresponding mean value is
\begin{align*}
\int_{0}^{N} \left | \sum_{n\sim N} a_n n^{-1-it} \right|^2 \, dt \, \ll \frac{1}{N} \sum_{n\sim N} |a_n|^2 \,  = \rho.
\end{align*}
 Hence, we have already lost a square root of the density. This is of course because the diagonal term in the mean value theorem corresponds to square root cancellation on average.

At first it may seem that including the variables $c,c'$ causes a loss of a factor $(1+\log 1/\theta)^{J/2} = \log^{0.39\dots+o(1)} x.$ However, without these variables we would lose a factor $\log x$ due to a smaller density, so that it is beneficial to have them in the mix (not to mention that we needed one of them in the proof of the Type I/II estimate).

As was noted in the proof of Lemma \ref{trivial}, some of our losses come from our inability to handle the cross-conditions in the sum (\ref{loss}), but this inaccuracy contributes definitely no more than $(1+\log 1/\theta)^{J/2} = \log^{0.39\dots+o(1)} x.$ Another potential loss is the use of Cauchy-Schwarz in the case of the sums $(S_{\pi,1}S_{\tau,1})^{1/2}$ in the proof; the Cauchy-Schwarz is optimal if most of the terms $S_{\pi,1}S_{\tau,1}$ are of the same size, but this may not be the case (depending on the partition $\pi \sqcup \tau,$ some of the cross-conditions are expected to be more strict than others). We do not pursue these issues here, as they would require a significant effort with relatively small improvements.

An alternative construction one might consider is to let the primes $p_j$ vary more freely by installing some cross-conditions, eg. of the form $p_{j+1} \cdots p_J \gg p_j.$ This would indeed increase the density of our sequence. However, to be able to use Cauchy-Schwarz, we would need to remove the cross-conditions going between $\pi$ and $\tau.$ At best (using smooth cross-conditions), removing one cross-condition causes a loss of a constant $C >1,$ and there are typically $\gg \log\log x$ cross-conditions to be removed, causing an additional loss of $\log^{C'} x.$ We expect that more is lost than gained in this approach.

Yet another set-up would be to make the intervals $I_j$ narrower, so that we could get a better control over the number of partitions needed (e.g. we could get $j_{l+1}-j_l \leq N$ for some fixed $N$ in the partition algorithm). This improves the factor $2^{J/2},$ but the losses from the narrowness of $I_j$ grow faster, making the bound worse.

\section{Fundamental proposition} \label{funprop}
From here on we shall not need the precise structure of the weights $W_{\mathcal{A}}, W_{\mathcal{B}}.$ Hence, we can freely use summation variables of type $p_j,q_j,r_j$ in the sieve decompositions below without risk of confusion. The aim of this section is to prove a proposition, which combines the previous estimates Type I/II and Type II, by using Harman's iterative argument (cf. Chapter 3 of Harman's book \cite{Har}). The proposition plays the same role as Lemma 5.3 in Chapter 5 of Harman's book.

For any natural number $d$ and any $U \geq 1,$ define
\begin{align*}
S(\mathcal{A}_d,U) := \sum_{n } 1_{(n,P(U))=1} W_{\mathcal{A}}(dn), \quad \quad S(\mathcal{B}_d,U) := \sum_{n } 1_{(n,P(U))=1} W_{\mathcal{B}}(dn).
\end{align*}
The basic idea is as follows: Suppose that we want to estimate $S(\A, x^{\gamma-2\delta}).$ By using the elementary identity $(\mu \ast 1)(n) = 1_{n=1},$ we have
\begin{align*}
S(\A, x^{\gamma-2\delta}) &= \sum_{d | P(x^{\gamma-2\delta})} \mu(d) S(\A_d, 1) \\ &= \sum_{\substack{n \\ d | P(x^{\gamma-2\delta})\\ d  < x^{1/2-\gamma+\delta} }} \mu(d) W_\A(nd) + \sum_{\substack{n, \\ d | P(x^{\gamma-2\delta})\\ d  \geq x^{1/2-\gamma+\delta} }} \mu(d) W_\A (nd)=:\Sigma_I + \Sigma_{II},
\end{align*}
say. In $\Sigma_I$ we have obtained a long smooth variable $n$ so that we have a Type I sum (cf. Proposition \ref{typeiestimate}). On the other hand, in $\Sigma_{II}$ we can write $d=p_1 \cdots p_k$ with $p_j \leq x^{\gamma-2\delta}$ for $j=1,2,\dots,k.$ Then there is some $j \leq k$ such that $p_1 \cdots p_j \in [x^{1/2-\gamma +\delta}, x^{1/2-\delta}],$ which means that we have a Type II sum (cf. Proposition \ref{typeiiestimate}). Here we come across a problem: in the Type II sum we also have a smooth variable $n,$ which means that the sum could be at least one factor of $\log x$ larger than the original sum (if we ignore the cancellations from $\mu(d)$). To overcome this problem, we must add a cut-off to the Buchstab's identity from below, so that we write 
\begin{align*}
S(\A, x^{\gamma-2\delta}) &= \sum_{d | P(x^{\gamma-2\delta})/P(w)} \mu(d) S(\A_d, w)
\end{align*}
with $w=x^{1/(\log \log x)^2}.$ This solves our problems, except that now in the Type I/II sum we also have $(n,P(w))=1.$ However, as was noted in Section \ref{typei}, this is not a problem since the weight $W_\A$ contains a $w$-smooth variable $c$ which can be combined with $n$ to form a long smooth variable.

In practice, we need a result of a more general from:
\begin{prop} \emph{\textbf{(Fundamental proposition).}} \label{ft} Let $Z=x^{\gamma-2\delta},$ $X=x^{1/2-\gamma/2+\delta},$ and let $U,V \geq 1,$ $U \leq x^{1/4-10\delta},$ $V \leq x^{1/2-\gamma+\delta}.$ Let $a_u, b_v \ll 1$ be some non-negative coefficients, supported for $(u,P(Z))=1,(v,P(Z))=1.$ Define  $\lambda:= y \log^{100} x /x.$ Then
\begin{align*}
\sum_{\substack{u \in (U^{1-\epsilon},U] \\
v \in (V^{1-\epsilon},V]}} a_u b_v S(\A_{uv},Z) = \lambda \sum_{\substack{u \in (U^{1-\epsilon},U] \\
v \in (V^{1-\epsilon},V]}} a_u b_v S(\B_{uv},Z) + \mathcal{O}\left( \lambda S(\B,X)\log^{-\delta/2} x \right).
\end{align*}
\end{prop}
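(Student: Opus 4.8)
The plan is to run Harman's iterative sieve identity (Buchstab's identity with a lower cut-off at $w$) against the two weights $W_{\mathcal{A}}$ and $W_{\mathcal{B}}$ simultaneously, using Proposition~\ref{typeiestimate} (Type I/II) and Proposition~\ref{typeiiestimate} (Type II) to control the difference of the two sides at each step, and tracking the accumulated error in terms of $\lambda S(\mathcal{B},X)$. First I would reduce to the case where $a_u,b_v$ are genuine indicator-type coefficients; since everything is non-negative it suffices to treat $a_u=1_{(u,P(Z))=1}1_{u\in(U^{1-\epsilon},U]}$ and similarly for $b_v$, so I will carry the ranges $u\in(U^{1-\epsilon},U]$, $v\in(V^{1-\epsilon},V]$ along passively throughout. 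Fixing such $u,v$ with $uv=:\ell$, the quantity to understand is $S(\mathcal{A}_\ell,Z)$, and the goal is an asymptotic $S(\mathcal{A}_\ell,Z)=\lambda S(\mathcal{B}_\ell,Z)$ valid after summing over $u,v$ with total error $\ll \lambda S(\mathcal{B},X)\log^{-\delta/2}x$.

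Next I would set up the sieve. Using $(\mu\ast 1)(n)=1_{n=1}$ with a cut-off, write, with $w=x^{1/(\log\log x)^2}$,
\begin{align*}
S(\mathcal{A}_{uv},Z)=\sum_{\substack{d\mid P(Z)/P(w)}}\mu(d)\, S(\mathcal{A}_{uvd},w),
\end{align*}
and split the $d$-sum according to whether $uvd<X=x^{1/2-\gamma/2+\delta}$ or $uvd\ge X$. In the first regime $uvd< X$ the complementary variable $n$ in $S(\mathcal{A}_{uvd},w)$ is long and coprime to $P(w)$, and combining it with the built-in $w$-smooth variable $c$ in $W_{\mathcal{A}}$ (as in the proof of Lemma~\ref{implies}) gives exactly a Type~I/II sum to which Proposition~\ref{typeiestimate} applies, with coefficients $a_u b_v$ in the $u,v$ variables and the divisor $d$ absorbed; since $u<x^{1/4-10\delta}$ and $vd<X\cdot V^{-1}\cdot\!\!$ (bounded appropriately) one checks the hypotheses $U\le x^{1/4-10\delta}$, $V\le x^{1/2-\gamma+\delta}$ are met after a dyadic/$e$-adic decomposition of $d$. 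In the second regime $uvd\ge X$ I factor $d=p_1\cdots p_k$ with each $p_i\le Z=x^{\gamma-2\delta}$ and $p_1>\cdots>p_k>w$; taking the least $j$ with $uv\,p_1\cdots p_j\ge X$ one gets $uv\,p_1\cdots p_j\in[X, XZ]\subset[x^{1/2-\gamma+\delta},x^{1/2-\delta}]$ (using $\gamma\ge 1/19$ so $Z=x^{\gamma-2\delta}$ is small), so this piece is a Type~II sum covered by Proposition~\ref{typeiiestimate}; here one uses Buchstab iteration to peel off $p_{j+1},\dots,p_k$ and the remaining sifted variable, at each stage either landing in the Type~II range or recursing. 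The coefficients appearing are $\lesssim\tau_w$ on each side, as required by Proposition~\ref{typeiiestimate}, because the $p_i$ are all $>w$.

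Then I would carry out the bookkeeping of errors. Each application of Proposition~\ref{typeiestimate} or Proposition~\ref{typeiiestimate} replaces the $\mathcal{A}$-sum by $\lambda$ times the corresponding $\mathcal{B}$-sum up to an error $\ll (\log^{-\delta}x)\,\Omega/\log x$; since the number of dyadic ranges and Buchstab steps is $\ll (\log\log x)^{O(1)}=\log^{o(1)}x$ and, crucially, $\Omega/\log x\asymp \lambda^{-1}\cdot (y\log^{100}x/x)\cdot S(\mathcal{B},X)/\!\!$ up to $\log^{o(1)}x$ factors (this is the identity $\lambda S(\mathcal{B},X)\gtrsim \Omega/\log x$ coming from the Prime Number Theorem computation around (\ref{omega}) and the fact that $S(\mathcal{B},X)=S(\mathcal{B},x^{(1-\gamma)/2+\delta})=\sum_p W_{\mathcal{B}}(p)$ has size $\asymp x\Omega/(y\log^{101}x)$), the total error is $\ll (\log^{-\delta+o(1)}x)\,\lambda S(\mathcal{B},X)\ll \lambda S(\mathcal{B},X)\log^{-\delta/2}x$, as claimed. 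The main obstacle is the verification that the Buchstab iteration closes, i.e.\ that \emph{every} term produced by iterating the identity is of Type~I/II or Type~II with variables in the admissible ranges: one must check that when $uv p_1\cdots p_j$ overshoots $X$ it cannot overshoot by more than a factor $Z$ (so it stays below $x^{1/2-\delta}$, inside the Type~II window $[x^{1/2-\gamma+\delta/2},x^{1/2-\delta/2}]$ after a mild $\delta$-adjustment), and that the ``Type~I'' pieces never have $u$ exceeding $x^{1/4-10\delta}$ — this uses the support hypothesis $U\le x^{1/4-10\delta}$ together with the fact that the sifted-out primes can be distributed between the Type~II halves using the flexibility built into the weight (the primes $r_l$). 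A secondary subtlety is ensuring no stray power of $\log x$ is lost when combining the $w$-smooth variable $c$ with $n$ and when passing between $e$-adic and dyadic decompositions, which is exactly the point of the $\lesssim$ notation and the estimates (\ref{smallprimes}); these were already dealt with in Lemma~\ref{implies} and in Section~\ref{typeii}, so I would simply invoke them.
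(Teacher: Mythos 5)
There is a genuine gap, and it lies exactly at the point your outline treats most casually: the choice of where to split the Buchstab/M\"obius expansion and how to group the variables afterwards. You split on $uvd<X$ versus $uvd\ge X$ with $X=x^{1/2-\gamma/2+\delta}$ and keep $u$ glued to $v$ in both regimes; the paper instead splits on $vd<W$ versus $vd\ge W$ with $W=x^{1/2-\gamma+\delta}$, and this difference is not cosmetic. In your ``Type I/II'' regime the hypothesis of Proposition~\ref{typeiestimate} is $V\le x^{1/2-\gamma+\delta}$ for the combined coefficient variable $vd$, but $uvd<x^{1/2-\gamma/2+\delta}$ allows $vd$ to be as large as $x^{1/2-\gamma/2+\delta}$ when $u$ is small, exceeding the admissible range by $x^{\gamma/2}$. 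In your ``Type II'' regime the claimed containment $[X,XZ]\subset[x^{1/2-\gamma+\delta},x^{1/2-\delta}]$ is false: $XZ=x^{1/2-\gamma/2+\delta}\cdot x^{\gamma-2\delta}=x^{1/2+\gamma/2-\delta}>x^{1/2-\delta}$, so the group $uvp_1\cdots p_j$ can land well outside the window $[x^{1/2-\gamma+\delta/2},x^{1/2-\delta/2}]$ demanded by Proposition~\ref{typeiiestimate}. Worse, since $uv$ can be as large as $x^{3/4-\gamma-9\delta}$ (e.g.\ $u\asymp x^{1/4-10\delta}$, $v\asymp x^{1/2-\gamma+\delta}$), there are terms for which no initial segment $uvp_1\cdots p_j$ lies in the Type II window at all, and which also fail your Type I/II regime; your decomposition simply does not cover them. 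The paper's choice fixes both problems simultaneously: $vd<W$ matches the Type I/II hypothesis verbatim (with $v'=vd$, $|b'_{v'}|\ll1$ supported on $(v',P(w))=1$), and when $vp_1\cdots p_k\ge W$ there is a unique $j$ with $vp_1\cdots p_{j-1}<W\le vp_1\cdots p_j\le WZ=x^{1/2-\delta}$, after which the \emph{entire} remainder $u\,n\,p_{j+1}\cdots p_k$ is lumped into the second Type II coefficient, which lands in the window automatically by (\ref{hash}) and is $\lesssim\tau_w$ (using $c_{j,k}c_{k-j,k}=1/k!$ to split the symmetrization factor, with the diagonal $p_i=p_{i'}$ terms controlled by Shiu's bound).

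A related structural misconception: you propose to ``use Buchstab iteration to peel off $p_{j+1},\dots,p_k$ \dots at each stage either landing in the Type II range or recursing,'' with the hope that every term eventually becomes Type I/II or Type II. No such iteration is needed (the paper's proof of Proposition~\ref{ft} is a single-step decomposition), and no closure argument of the kind you sketch exists --- if every Buchstab term could be forced into an admissible range, the discarded sums and numerical deficiencies of Section~\ref{buchs} would be unnecessary. Your proposed remedy via the primes $r_l$ also does not work at this level: those primes are internal to the weights and are exploited inside the proof of the Type II estimate, not available to reposition coefficient supports when \emph{applying} Proposition~\ref{typeiiestimate}, whose window is fixed. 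Two smaller points: the reduction ``by non-negativity'' to the full-range indicators $a_u=1_{(u,P(Z))=1}1_{u\in(U^{1-\epsilon},U]}$ is not valid for an asymptotic equality (it is also unnecessary, since both propositions accept general bounded coefficients), and your error bookkeeping misstates the size of $S(\B,X)$ (it is $\asymp x\Omega/\log^{101}x$, with no factor $1/y$; the errors from the propositions must be multiplied by $y$ because they are normalized by $1/y$, after which they are indeed $\ll\lambda S(\B,X)\log^{-\delta+o(1)}x$).
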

\begin{proof}
 Define $W=x^{1/2-\gamma+\delta},$ and let $\CC= \A$ or $\CC=\B.$  Using Buchstab's identity we obtain
\begin{align*}
&\sum_{\substack{u \in (U^{1-\epsilon},U] \\
v \in (V^{1-\epsilon},V]}} a_u b_v S(\CC_{uv}, Z) \\
&= \sum_{\substack{u \in (U^{1-\epsilon},U] \\
v \in (V^{1-\epsilon},V]}} a_u b_v \sum_{ \substack{d | P(Z)/P(w) \\ vd < W}} \mu(d) S(\CC_{uvd},w) + \sum_{\substack{u \in (U^{1-\epsilon},U] \\
v \in (V^{1-\epsilon},V]}} a_u b_v \sum_{ \substack{d | P(Z)/P(w) \\ vd \geq W}} \mu(d) S(\CC_{uvd},w) \\[10pt]
&=: \Sigma_I(\CC) + \Sigma_{II}(\CC),
\end{align*}
say. We will apply the Type I/II estimate (Proposition \ref{typeiestimate}) to  $\Sigma_I(\CC)$ and the Type II estimate (Proposition \ref{typeiiestimate}) to $\Sigma_{II}(\CC).$

\textbf{Sums $\Sigma_I(\CC)$:} We let $v'=vd,$ and 
\begin{align*}
b'_{v'}=1_{v' < W} \sum_{\substack{v'=vd \\ d | P(Z)/P(w)}} \mu(d) b_v.
\end{align*}
Since $ b_v$ is supported on $(v,P(Z))=1,$ we have $|b'_{v'}| \, \ll \, 1,$ and $b'_{v'}$ is supported on $(v',P(w))=1.$ Thus, by Proposition \ref{typeiestimate} we have 
\begin{align*}
\Sigma_I(\A)= \lambda \Sigma_I(\B) + \mathcal{O}\left( \lambda S(\B,X)\log^{-\delta/2} x \right).
\end{align*}
 
\textbf{Sums $\Sigma_{II}(\CC)$:}  We write
\begin{align*}
\Sigma_{II}(\CC) &= \sum_{\substack{u \in (U^{1-\epsilon},U] \\
v \in (V^{1-\epsilon},V]}} a_u b_v \sum_{k} (-1)^k \sum_{\substack{ w \leq p_1 < p_2 < \cdots <p_k < Z \\ vp_1 \cdots p_k \geq W}} S(\CC_{uvp_1 \cdots p_k},w) \\
&=\sum_{k} \frac{(-1)^k}{k!} \sum_{\substack{u \in (U^{1-\epsilon},U] \\
v \in (V^{1-\epsilon},V]}} \sum_{\substack{w \leq p_1,p_2, \cdots ,p_k < Z  \\ vp_1 \cdots p_k \geq W}} a_u b_v  S(\CC_{uvp_1 \cdots p_k},w) + \mathcal{O}(E(\CC)),
\end{align*}
where the sum over $k$ runs over $k \ll (\log \log x )^2,$ and the error term is for $\CC=\A$ bounded by (using the notation $\tau^{(4)}(n):= (1\ast1\ast1\ast1)(n)$)
\begin{align*}
k^2 \sum_{\substack{u \in (U^{1-\epsilon},U] \\
v \in (V^{1-\epsilon},V]}} \sum_{\substack{w \leq p_1,p_2, \cdots ,p_k < Z, \\ p_1 = p_2}}& a_u b_v  S(\A_{uvp_1 \cdots p_k},w) \\
 &\lesssim \sum_{\substack{u \in (U^{1-\epsilon},U] \\
v \in (V^{1-\epsilon},V]}}\sum_{\substack{w \leq p_1,p_2, \cdots ,p_k < Z, \\ p_1 = p_2}}\, \, \sum_{n, \, (n,P(w))=1} W_{\A}(uvp_1\cdots p_k n) \\
& \ll k! \sum_{n, w \leq p < Z} \tau^{(4)}(n) W_{\A}(p^2n) \\
& \ll k! y\log^C x \sum_{w \leq p < Z} p^{-2} \ll y \log^{-C} x,
\end{align*}
where we have applied Shiu's bound (Lemma \ref{shiu}) for the penultimate inequality. Similarly, we obtain a sufficient error term if $\CC=\B.$ Hence, we need to handle the sums
\begin{align*}
 \frac{1}{k!}\sum_{\substack{u \in (U^{1-\epsilon},U] \\
v \in (V^{1-\epsilon},V]}} \sum_{\substack{w \leq p_1,p_2, \cdots ,p_k < Z  \\ vp_1 \cdots p_k \geq W}} a_u b_v   S(\CC_{uvp_1 \cdots p_k},w).
\end{align*}
 To this end we note that for all $vp_1 \cdots p_k \geq W$ in the above sum, there exists exactly one $j \leq k$ such that
 \begin{align*}
  W \leq vp_1 \cdots p_j \leq x^{1/2-\delta}  \quad \text{and} \quad  vp_1 \cdots p_{j-1} < W.
\end{align*}  
 By (\ref{hash}) this implies that $unp_{j+1} \cdots p_k \in  [x^{1/2 - \gamma + \delta/2}, x^{1/2 -\delta/2}],$ where $n$ is the implicit variable in  $S(\CC_{uvp_1 \cdots p_k},w).$   Let $c_{j,k}, \, j= 1,\dots, k$  be any positive constants such that $c_{j,k} \leq 1/j!,$ $c_{j,k}c_{k-j,k} = 1/k!.$ Define then for any $j \leq k$
\begin{align*}
b'_{v',j,k} &:=   1_{v' \in [x^{1/2 - \gamma + \delta/2}, x^{1/2 -\delta/2}]} \,c_{j,k} \sum_{\substack{v'=vp_1 \cdots p_j \\  v \in (V^{1-\epsilon},V]\\ w \leq p_1, \dots, p_j < Z \\  W \leq vp_1 \cdots p_j \leq x^{1/2-\delta} , \\
vp_1 \cdots p_{j-1} < W }} b_v, \\
a'_{u',j,k} & :=   1_{u' \in [x^{1/2 - \gamma + \delta/2}, x^{1/2 -\delta/2}]} \sum_{\substack{u'=unr\\ u \in (U^{1-\epsilon},U]}} a_u 1_{(n,P(w))=1} \, c_{k-j,k} \sum_{\substack{r=p_1 \dots p_{k-j}, \\ w \leq p_1, \dots, p_{k-j }<Z}} 1.
\end{align*}
 Then (by uniqueness of the choice of $j$)
\begin{align*}
 \frac{1}{k!}\sum_{\substack{u \in (U^{1-\epsilon},U] \\
v \in (V^{1-\epsilon},V]}}  \sum_{\substack{w \leq p_1,p_2, \dots ,p_k < Z  \\ vp_1 \cdots p_k \geq W}} a_u b_v S(\CC_{uvp_1 \cdots p_k},w) = \sum_{j \leq k} \sum_{u',v'} a'_{u',j,k} b'_{v',j,k} W_{\CC}(u'v').
\end{align*}
Since $c_{j,k} \leq 1/j!,$ a trivial bound yields (using $(uv,P(Z))=1$)
\begin{align*}
|a'_{u',j,k}| \, \lesssim \, \tau_{w}(u')1_{(u',P(w))=1} \quad \text{and} \quad |b'_{v',j,k}| \, \lesssim \, \tau_{w}(v')1_{(v',P(w))=1}. 
\end{align*}
Hence, by Proposition \ref{typeiiestimate} we get
\begin{align*}
\sum_{u',v'} a'_{u',j,k} b'_{v',j,k} W_{\A}(u'v') = \lambda \sum_{u',v'} a'_{u',j,k} b'_{v',j,k} W_{\B}(u'v') + \mathcal{O}\left( \lambda S(\B,X)\log^{-\delta} x \right),
\end{align*}
which suffices since we sum over $j,k \lesssim 1.$
\end{proof}

\section{Buchstab decompositions} \label{buchs}
The aim of this section is to prove Proposition \ref{mainp} by obtaining a lower bound for $S(\A,x^{1/2-\gamma/2+\delta}).$  Our argument follows  similar lines as that described in Chapter 5 of Harman's book \cite{Har}. There are also many similarities with Jia's and Liu's decompositions in \cite{JL}. 

The general idea of Harman's sieve is to use Buchstab's identity to decompose the sum $S(\CC,x^{1/2-\gamma/2+\delta})$  (in parallel for $\CC=\A$ and $\CC=\B$) into a sum of the form $\sum_k \epsilon_k S_k(\CC),$ where $\epsilon_k \in \{-1,1\},$ and  $S_k(\CC) \geq 0$ are sums over almost-primes.  Since we are interested in a lower bound, for $\CC=\A$ we can insert the trivial estimate $S_k(\A) \geq 0$ for any $k$ such that the sign $\epsilon_k =1;$ these sums are said to be discarded. For the remaining $k$ we will obtain an asymptotic formula by using Propositions \ref{typeiiestimate} and \ref{ft}. That is, if $\mathcal{K}$ is the set of indices that are discarded, then (for  $\lambda:= y \log^{100} x /x$) 
\begin{align*}
S(\A,x^{1/2-\gamma/2+\delta})&= \sum_k \epsilon_k S_k(\A) \geq \sum_{k \notin \mathcal{K}} \epsilon_k S_k(\A)  \\
&\sim \sum_{k \notin \mathcal{K}} \epsilon_k \lambda S_k(\B) =  \lambda S(\B,x^{1/2-\gamma/2+\delta}) -  \lambda \sum_{k \in \mathcal{K}}  S_k(\B).
\end{align*} 
We are successful if we can then obtain a bound $\sum_{k \in \mathcal{K}} S_k(\B) \leq (1-\mathfrak{C}(\gamma)) S(\B,x^{1/2-\gamma/2+\delta})$ for some $\mathfrak{C}(\gamma) > 0;$ obtaining this ultimately determines the exponent $\gamma$ in Proposition \ref{mainp}. 

For this last step we require two lemmata, which allow us to transform sums over $W_\B$ into so-called Buchstab integrals that can be estimated numerically. Let $\omega(u)$ denote the Buchstab function (cf. Chapter I of Harman's book \cite{Har}, for instance), so that by the Prime Number Theorem for $X^{\epsilon} < Z < X,$ $X \log^{-C} X \ll Y \ll X$
\begin{align*}
\sum_{X< n \leq X+Y} 1_{(n,P(z))=1} = (1+o(1)) \omega \left(\frac{\log X}{\log Z} \right) \frac{Y}{\log Z}
\end{align*}
(the same argument as in  Chapter I of \cite{Har} gives the result for the slightly shorter intervals of length $Y$). Note that for $1< u \leq 2$ we have $\omega(u)=1/u.$ In the numerical computations we will use the following standard upper bound (cf. Lemma 4 and the discussion below that in \cite{JL}, for instance) for the Buchstab function 
\begin{align*}
\omega(u) \, \leq \begin{cases} 0, &u < 1 \\
1/u, & 1 \leq u < 2 \\
(1+\log(u-1))/u, &2 \leq u < 3 \\
0.5644, &  3 \leq u < 4 \\
0.5617, & u \geq 4.
\end{cases}
\end{align*}
In the two lemmata below we assume that the range $\mathcal{U}\subset [x^{\epsilon},x]^{k}$ is sufficiently well-behaved, e.g. an intersection of sets of the type $\{ \boldsymbol{u}: u_i < u_j \}$ or $\{\boldsymbol{u}: V <  f(u_1, \dots,u_k) < W\}$ for some polynomial $f$ and some fixed $V,W.$

\begin{lemma} \label{bi} Let $X=x^{1/2-\gamma/2 +\delta}$ and let $\mathcal{U} \subset [x^{\epsilon},x]^{k}.$ Then
\begin{align*}
\frac{\log^{100} x}{x}\sum_{(p_1, \dots , p_k) \in \mathcal{U}} S(\B_{p_1, \dots, p_k},p_k) = S(\B,X)(1+o(1))(1-\gamma)\int \omega (\boldsymbol{\alpha }) \frac{d\alpha_1 \cdots d\alpha_k}{\alpha_1\cdots\alpha_{k-1}\alpha_k^2},
\end{align*}
where the integral is over the range 
\begin{align*}
\{(\boldsymbol{\alpha}: \, (x^{\alpha_1}, \dots, x^{\alpha_k}) \in \mathcal{U})\}
\end{align*}
 and $\omega(\boldsymbol{\alpha})= \omega(\alpha_1,\dots,\alpha_k):= \omega((1-\gamma-\alpha_1-\cdots -\alpha_k)/\alpha_k)$
\end{lemma}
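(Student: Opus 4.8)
\textbf{Proof proposal for Lemma \ref{bi}.}
The plan is to reduce the sum over $k$-tuples of primes to an iterated application of the Prime Number Theorem on short intervals, which converts each prime sum into an integral over a logarithmic scale, and to identify the resulting sum with $S(\B,X)$ via the same asymptotic. First I would unfold the definition of $S(\B_{p_1,\dots,p_k},p_k)$, writing it as $\sum_n 1_{(n,P(p_k))=1} W_\B(p_1\cdots p_k n)$, and then expand $W_\B$ itself as the sum of $f_\B$ over the product $ncc'q_1q_2q_3 p_1'\cdots p_J' r_1\cdots r_L \asymp x$. Since $f_\B$ is the piecewise-linear smoothing of the indicator of $[1-\eta_\B,1+\eta_\B]$ with $\eta_\B=\log^{-100}x$ and $\xi_\B=\log^{-100-\epsilon}x$, the variable $ncc'q_1q_2q_3p_1'\cdots p_J' r_1\cdots r_L$ is constrained to an interval of relative length $\asymp \log^{-100}x$ around $x$; after fixing all variables except the one playing the role of the sifted variable, $n$ runs over a short interval of length $Y \gg x\log^{-100}x$ which is still $\gg x\log^{-C}x$, so the stated PNT-type asymptotic for $\sum_{X<n\le X+Y}1_{(n,P(z))=1}$ applies with $z=p_k$ and $X$ the appropriate main term. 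This is legitimate because $(x^\epsilon, x)^k$-type ranges ensure $p_k < x^{1-\epsilon}$ so the Buchstab function is evaluated in a regime where it is positive and bounded.

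Next I would carry out the substitution $p_i = x^{\alpha_i}$, so that by PNT each prime sum $\sum_{p_i\sim x^{\alpha_i}} p_i^{-1}$ contributes a factor $\asymp d\alpha_i$, except that the innermost variable $p_k$ appears both as a summation variable and in the sifting level $P(p_k)$; tracking the Mertens/PNT factors carefully gives the weight $1/(\alpha_1\cdots\alpha_{k-1}\alpha_k^2)$, with the extra $\alpha_k^{-1}$ coming from the $\omega(u)/\log z$ with $\log z = \alpha_k\log x$. The function $\omega(\boldsymbol\alpha)$ as defined is exactly $\omega$ evaluated at $(\log X' - \sum\alpha_i\log x)/(\alpha_k\log x)$ where $X'$ is the total length of the product divided by $p_1\cdots p_k n$; using (\ref{hash}) and the fact that $W_\B$ localizes the product to $\asymp x$, one checks $\log X'/\log x \to 1-\gamma$ in the relevant normalization (here one uses that $cc'q_1q_2q_3p_1'\cdots p_J'r_1\cdots r_L = x^{\gamma+o(1)}$ is absorbed into the $o(1)$, since those factors together with $n$ rebuild the $S(\B,X)$ sum). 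Finally, pulling out the factor $S(\B,X)$: by the same PNT argument applied to $S(\B,X) = S(\B,x^{(1-\gamma)/2+\delta}) = \sum_p W_\B(p)$ and the computation in (\ref{omega})--(\ref{mainterm}), $\frac{\log^{100}x}{x}S(\B,X) = \frac{(2+o(1))\Omega}{(1-\gamma)\log x}$, so the common factor $\Omega/\log x$ cancels and the $(1-\gamma)$ reappears with the opposite sign, matching the claimed formula.

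The main obstacle I anticipate is the bookkeeping of the $o(1)$ errors and the uniformity of the PNT asymptotic across all the nested summations: one applies the short-interval prime-counting asymptotic once to handle the sifted variable $n$, but the remaining $k$ prime variables $p_1,\dots,p_k$ and the ``background'' variables of $W_\B$ must each be summed by Mertens-type estimates, and one must ensure these are uniform as the $\alpha_i$ range over a compact subset of $(0,1-\gamma)$ bounded away from $0$ (which is where the hypothesis $\mathcal{U}\subset[x^\epsilon,x]^k$ and the well-behavedness of $\mathcal{U}$ enter, guaranteeing $\alpha_i \gg \epsilon$ and that the boundary of the integration region has measure zero). A secondary technical point is that $f_\B$ is a smoothing rather than a sharp cutoff, so one should first note $\int f_\B(z)\,dz = (2+\log^{-\epsilon}x)\log^{-100}x$ and that replacing $f_\B$ by the sharp indicator of $[1-\eta_\B,1+\eta_\B]$ costs only a factor $1+o(1)$, which is harmless; alternatively one works with $f_\B$ throughout and notes the smoothing only affects the length $Y$ by a $1+o(1)$ factor. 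Neither of these is deep, but writing it so that all error terms are genuinely $o(1)$ relative to the main term (and not merely $\lesssim 1$) requires care, since the whole point of the paper is to avoid losing powers of $\log x$.
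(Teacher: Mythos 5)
Your proposal follows essentially the same route as the paper: unfold $W_\B$, apply the short-interval Buchstab/PNT asymptotic to the sifted variable with the smooth weight $f_\B$ (whose mass $(2+o(1))\log^{-100}x$ together with the background variables reproduces the factor $(2+o(1))\Omega$), convert the sums over $p_1,\dots,p_k$ into the Buchstab integral via $p_i=x^{\alpha_i}$, and normalize against $S(\B,X)$ through the identity (\ref{omega}), which is exactly how the paper concludes. The only cosmetic slip is your description of the length of the interval for the sifted variable ($Y$ should be $\asymp x\log^{-100}x$ divided by the product of the fixed variables, i.e.\ of relative length $\log^{-100}x$ about its location), which does not affect the argument.
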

\begin{proof}
The left-hand side is by the Prime Number Theorem
\begin{align*}
&\frac{\log^{100} x}{x}\sum_{(p_1, \dots , p_k) \in \mathcal{U}} \sum_q 1_{(q,P(p_k))=1} W_{\B}(p_1 \cdots  p_k q ) \\
&= (2+o(1))\Omega  \sum_{(p_1, \dots , p_k) \in \mathcal{U}} \frac{1}{p_1\cdots p_k \log p_k} \omega \left( \frac{\log(x^{1-\gamma+o(1)}/(p_1\cdots p_k))}{\log p_k} \right) \\
&= (2+o(1))\Omega\hspace{-5pt}  \sum_{(n_1,\dots,n_k ) \in \mathcal{U}} \frac{1}{n_1\cdots n_k (\log n_1) \dots (\log n_{k-1} )\log^2 n_k} \omega \left( \frac{\log(x^{1-\gamma+o(1)}/(n_1\cdots n_k))}{\log n_k} \right) \\
&= (2+o(1))\Omega  \int_{\mathcal{U}}  \omega \left( \frac{\log(x^{1-\gamma}/(u_1\cdots u_k))}{\log u_k} \right)   \frac{du_1\cdots du_k}{u_1\cdots u_k (\log u_1) \dots (\log u_{k-1} )\log^2 u_k}\\
&= \frac{(2+o(1))\Omega}{\log x}  \int \omega (\boldsymbol{\alpha }) \frac{d\alpha_1 \cdots d\alpha_k}{\alpha_1\cdots\alpha_{k-1}\alpha_k^2} 
\end{align*}
by the change of variables $u_j=x^{\alpha_j}$ The claim now follows by the definition (\ref{omega}) of $\Omega$.
\end{proof}
\begin{remark}
Similarly as in \cite{JL}, we call the factor $(1-\gamma) \int d \boldsymbol{\alpha}$ the deficiency of the corresponding sum. By the lemma it is up to the factor $1+o(1)$ the ratio of the sum and $S(\B,X)$.
\end{remark}
We also need the following variant of the above lemma, which will occur as the result of using role reversals. 
\begin{lemma}\label{roleb} Let $X=x^{1/2-\gamma/2 +\delta},$ $Z=x^{\gamma -2\delta},$ and let $\mathcal{U} \subset [x^{\epsilon},x)^4.$ Then 
\begin{align*}
\frac{\log^{100} x}{x} \hspace{-10pt} \sum_{\substack{(q,m,p_2,p_3) \in \mathcal{U} \\  (m,P(q))=1 }}  \hspace{-10pt} S(\B_{qmp_2p_3},Z) = S(\B,X)(1+o(1)) \cdot \frac{1-\gamma}{\gamma}\int \omega_2 (\boldsymbol{\alpha }) \frac{d\alpha_0 d\alpha_1d\alpha_2d\alpha_3 }{\alpha_0^2 \alpha_2 \alpha_3},
\end{align*}
where the integral is over $\{\boldsymbol{\alpha}: \, (x^{\alpha_0},x^{\alpha_1},x^{\alpha_2},x^{\alpha_3}) \in \mathcal{U} \},$ and 
\begin{align*}
\omega_2 (\boldsymbol{\alpha }):=\omega((1-\gamma-\alpha_0-\alpha_1-\alpha_2-\alpha_3)/\gamma)\omega(\alpha_1/\alpha_0).
\end{align*}
\end{lemma}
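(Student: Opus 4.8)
The plan is to imitate the proof of Lemma \ref{bi} almost verbatim; the only genuinely new feature is that the variable $m$, constrained by $(m,P(q))=1$, must be treated as a sifted sequence in its own right — this is the \emph{role reversal} alluded to in the name of the lemma. First I would unwind the innermost sieve exactly as in Lemma \ref{bi}: writing $S(\B_{qmp_2p_3},Z)=\sum_n 1_{(n,P(Z))=1}W_\B(qmp_2p_3\,n)$, inserting the definition of $W_\B$, and for fixed $q,m,p_2,p_3$ pulling the sum over $n$ inside the sum over the dummy product $M:=cc'q_1q_2q_3p_1\cdots p_J r_1\cdots r_L$, the inner $n$-sum counts $Z$-rough integers weighted by $f_\B$ over an interval of length $\asymp \eta_\B x/(qmp_2p_3\,M)$ about $x/(qmp_2p_3\,M)$. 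Since $M=x^{\gamma+o(1)}$ by \ref{hash}, the Buchstab-function asymptotic for rough numbers in short intervals recalled before Lemma \ref{bi}, together with $\int f_\B(z)\,dz=(2+o(1))\eta_\B$ and $\sum_M M^{-1}=\Omega$, gives, exactly as in Lemma \ref{bi},
\[
\frac{\log^{100}x}{x}\,S(\B_{qmp_2p_3},Z)=(2+o(1))\,\Omega\cdot\frac{1}{qmp_2p_3\,\log Z}\,\omega\!\left(\frac{\log\big(x^{1-\gamma+o(1)}/(qmp_2p_3)\big)}{\log Z}\right).
\]

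Next comes the new step, the sum over $m$ with $(m,P(q))=1$. For fixed $q,p_2,p_3$ the surviving $\omega$-factor is slowly varying (it is $\omega$ of an affine function of $\log m$) and the cut-off restricts $m$ to a well-behaved region, so partial summation from the classical Buchstab-function asymptotic $\sum_{m\le t,\,(m,P(q))=1}1=(1+o(1))\,\omega(\log t/\log q)\,t/\log q$ yields, for $\Phi$ the product of that $\omega$-factor and $1_{\mathcal{U}}$,
\[
\sum_{m:(m,P(q))=1}\frac{\Phi(m)}{m}=(1+o(1))\,\frac{1}{\log q}\int\Phi(t)\,\omega\!\left(\frac{\log t}{\log q}\right)\frac{dt}{t}.
\]
This is the role-reversal step: the rough variable $m$ has contributed an extra factor $\omega(\log m/\log q)$ and the weight $1/\log q$.

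Finally I would convert the remaining sums over the primes $q,p_2,p_3$ to integrals by Mertens' theorem — the surplus $1/\log q$ making the $q$-contribution $du_0/(u_0(\log u_0)^2)$ while $p_2,p_3$ contribute $du_i/(u_i\log u_i)$ — substitute $u_i=x^{\alpha_i}$, and use $\log Z=(\gamma+o(1))\log x$; replacing $\gamma-2\delta$ by $\gamma$ inside the argument of $\omega$ is the only place a $\delta$-dependent quantity is absorbed into the $o(1)$, which is harmless because the estimate is applied with slack in Section \ref{buchs}. Tracking the Jacobians then produces the measure $d\alpha_0\,d\alpha_1\,d\alpha_2\,d\alpha_3/(\alpha_0^2\alpha_2\alpha_3)$ — the $1/\log x$ from the $q$-integral being cancelled by the $\log x$ coming from $dt/t=\log x\,d\alpha_1$ — the two arguments $(1-\gamma-\alpha_0-\alpha_1-\alpha_2-\alpha_3)/\gamma$ and $\alpha_1/\alpha_0$ of $\omega$, hence $\omega_2(\boldsymbol{\alpha})$, and the leading constant $(2+o(1))\Omega/\log Z$, which by \ref{omega} and the definition of $\Omega$ is identified with $(1+o(1))\tfrac{1-\gamma}{\gamma}S(\B,X)$ exactly as at the end of the proof of Lemma \ref{bi}. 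This gives the asserted identity.

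There is no serious obstacle: every ingredient — the short-interval rough-number count, the classical Buchstab asymptotic, Mertens' theorem, the change of variables — is either already used in Lemma \ref{bi} (and \ref{omega}) or entirely standard. The one point to watch is that the $1+o(1)$ in the $n$-sum holds uniformly in $q,m,p_2,p_3$ over the range dictated by $\mathcal{U}$, and that the $\Phi$ in the $m$-sum really is slowly varying on the well-behaved region $\mathcal{U}$, so that these factors survive the outer summations; both are routine, being exactly the uniformity underlying Lemma \ref{bi} and the discussion in Chapter I of \cite{Har}.
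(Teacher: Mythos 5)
Your proposal is correct and follows essentially the same route as the paper: evaluate the inner sifted $n$-sum by the short-interval Buchstab asymptotic to get the factor $(2+o(1))\Omega\,\omega(\cdot)/(qmp_2p_3\log Z)$, replace the condition $(m,P(q))=1$ by the density $\omega(\log m/\log q)/\log q$ via the classical Buchstab asymptotic, convert the remaining sums to integrals with $u_i=x^{\alpha_i}$, and identify the constant through (\ref{omega}) as in Lemma \ref{bi}. The paper's own proof is just a terser version of this computation (it leaves the role-reversal $m$-step and the uniformity remarks implicit), so no substantive difference.
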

\begin{proof} The left-hand side equals by the Prime Number Theorem
\begin{align*}
&\frac{\log^{100} x}{x}\sum_{\substack{q,m,p_2,p_3,n \\ (q,m,p_2,p_3) \in \mathcal{U} \\ (n,P(Z))=1, (m,P(q))=1 }} W_{\B}(qmp_2p_3n)\\
&= \frac{(2+o(1))\Omega}{\log Z}\sum_{(q,m,p_2,p_3) \in \mathcal{U}} \frac{1_{(m,P(q))=1}}{qp_2p_3m} \omega \left( \frac{\log(x^{1-\gamma+o(1)}/(qmp_2p_3))}{\log Z} \right) \\
&= \frac{(2+o(1))\Omega}{\gamma \log x}\sum_{(q,m,p_2,p_3) \in \mathcal{U}} \frac{1}{qp_2p_3m\log q} \omega \left( \frac{\log m}{\log q} \right) \omega \left( \frac{\log(x^{1-\gamma}/(qmp_2p_3))}{\log Z} \right) \\
& \frac{(2+o(1))\Omega}{(1-\gamma)\log x} \cdot \frac{1-\gamma}{\gamma}\int \omega_2 (\boldsymbol{\alpha }) \frac{d\alpha_1d\alpha_2d\alpha_3 d\alpha_4}{\alpha_1 \alpha_2\alpha_4^2}.
\end{align*}
The claim now follows by the definition  (\ref{omega}) of $\Omega$.
\end{proof}
\begin{remark}
In this instance we call the factor $\frac{1-\gamma}{\gamma} \int d \boldsymbol{\alpha}$ the deficiency of the corresponding sum. By the lemma it is the ratio of the sum and $S(\B,X)$, up to the factor $1+o(1)$. 
\end{remark}

We are now ready for the Buchstab decompositions. We fix $\gamma=1/19,$ and define 
\begin{align*}
X:= x^{1/2-\gamma/2 + \delta}, \quad \quad Z:=x^{\gamma -2 \delta}, \quad \quad W:= x^{1/2-\gamma + \delta} ,
\end{align*}
 and write by Buchstab's identity (for $\CC = \A$ or $\CC = \B$)
\begin{align*}
S(\CC, X) &= S(\mathcal{C}, Z) - \sum_{Z \leq p < X} S(\CC_p, p) \\
& =  S(\CC, Z) - \sum_{W \leq p < X} S(\CC_p, p)  - \sum_{Z \leq p < W} S(\CC_p, Z) + \sum_{\substack{Z < p_2 < p_1 < W \\ p_1p_2^2 < X^2 }} S(\CC_{p_1p_2}, p_2) \\
&=: S_1(\CC) -S_2(\CC)-S_3(\CC)+S_4(\CC).
\end{align*}

\subsection{Sum $S_1(\CC)$}
Applying Proposition \ref{ft} with $U=V=1,$ $a_u=b_u=1_{u=1}$ we obtain $S_1(\A) = \lambda S_1(\B) +  \mathcal{O}\left( \lambda S(\B,X)\log^{-\delta/2} x \right).$

 \subsection{Sum $S_2(\CC)$}
We have (since $S(\mathcal{A}_p, p) \equiv 0$ for $p \geq X$)
\begin{align*}
S_2(\A) &= \sum_{ p \geq W } S(\mathcal{A}_p, p) = \sum_{\substack{p,q \\
q > p \geq W }} W_{\A}(pq) = \frac{1}{2} \sum_{\substack{p,q \\
 p, q \in [W,x^{1/2-\delta/2}]}} W_{\A}(pq) + \mathcal{O}(y\log^{-C} x) \\ & = \lambda S_2(\B) +  \mathcal{O}\left( \lambda S(\B,X)\log^{-\delta/2} x \right),
\end{align*}
by Proposition \ref{typeiiestimate}.

\subsection{Sum $S_3(\CC)$}
Dividing the sums $S_3(\CC)$ into $\mathcal{O}_{\epsilon}(1)$ sums such that $p=v \in (V^{1-\epsilon},V],$ and applying Proposition \ref{ft} with $U=1,$ $a_u=1_{u=1},$ and $b_v=1_{v \in \mathbb{P}}1_{Z \leq v < W},$ we obtain  $S_3(\A) = \lambda S_3(\B) +  \mathcal{O}\left( \lambda S(\B,X)\log^{-\delta/2} x \right).$

\subsection{Sum $S_4(\CC)$}
We write $S_4(\CC) = S_5(\CC) + S_6(\CC)+S_7(\CC)+S_8(\CC),$ where for $V := x^{1/2-2\gamma+ 3\delta}$ 
\begin{align*}
S_5(\CC) :=  \sum_{\substack{Z < p_2 < p_1 < V \\ p_1p_2 < W }} S(\CC_{p_1p_2}, p_2), \quad \quad
S_6 (\CC):= \sum_{\substack{Z \leq p_2 < p_1 <W \\
W \leq p_1p_2 \leq x^{1/2-\delta}}}  S(\CC_{p_1p_2}, p_2) \\
S_7(\CC) := \sum_{\substack{x^{1/4-\delta/2} \leq p_1 < W \\ Z \leq p_2 < x^{1/4-10\delta} \\  p_1p_2 > x^{1/2-\delta} }}  S(\CC_{p_1p_2}, p_2) , \quad \quad S_8(\CC) := \sum_{\substack{x^{1/4-\delta/2} \leq p_1 < W \\  x^{1/4-10\delta} \leq p_2 < p_1 \\  x^{1/2-\delta}p_2 < p_1p_2^2 \leq X^2  }}  S(\CC_{p_1p_2}, p_2).
\end{align*}
We estimate each sum separately.

\subsection{Sum $S_5(\CC)$}
Two applications of Buchstab's identity yields
\begin{align*}
S_5(\CC)& =   \sum_{\substack{Z < p_2 < p_1 < V \\ p_1p_2 < W }} S(\CC_{p_1p_2}, Z)
- \sum_{\substack{Z < p_3 < p_2 < p_1 < V \\ p_1p_2 < W, \, \, p_1p_2p_3^2 < X^2}} S(\CC_{p_1p_2p_3}, Z) \\
 & \hspace{80pt}+  \sum_{\substack{Z <p_4< p_3 < p_2 < p_1 < V \\ p_1p_2 < W, \, \, p_1p_2p_3^2 < X^2,  \, \, p_1p_2p_3p_4^2 < X^2}} S(\CC_{p_1p_2p_3p_4}, p_4) \\
 &=:S_{5,1}(\CC)- S_{5,2}(\CC)+ S_{5,3}(\CC),
\end{align*}
say.
\subsubsection{Sum $S_{5,1}(\CC)$}
Using Proposition \ref{ft} with $u=1$ and $v=p_1p_2,$ we obtain $S_{5,1}(\A) = \lambda S_{5,1}(\B) +  \mathcal{O}\left( \lambda S(\B,X)\log^{-\delta/2} x \right).$

\subsubsection{Sum $S_{5,2}(\CC)$}
Note that $p_1p_2 \leq W,$ $p_3<p_2<p_1$  implies $p_3 \leq W^{1/2} < x^{1/4-20\delta}.$  Thus, we wish to apply Proposition \ref{ft} with $v=p_1p_2,$ $u=p_3$ but we have cross-conditions $p_3 < p_2,$ $p_1p_2p_3^2 < X^2$ that need to be removed. We do this by dividing the ranges into shorter ones, that is,
\begin{align*}
S_{5,2}(\CC) = \sum_{V_1,V_2,V_3}\sum_{\substack{Z < p_3 < p_2 < p_1 < V \\ p_1p_2 < W, \, \, p_1p_2p_3^2 < X^2 \\ p_j \in (V_j^{1-\epsilon},V_j], \, \, j \in \{1,2,3\}}} S(\CC_{p_1p_2p_3}, Z),
\end{align*}
where the sum over $V_j$ runs over $V_j$ of the form $V^{(1-\epsilon)^g},$ $g \in \Z$ such that $V_3 \leq V_2,$ and $(V_1V_2V_3^2)^{1-\epsilon} \leq X^2$ (that is, each condition is loosened if necessary but at most by a factor of $x^{\mathcal{O}(\delta)}$) Note that the overall sign of sums $S_{5,2}(\CC)$ is negative, so that we only require an upper bound for $S_{5,2}(\A).$ Thus, we can drop the unwanted cross-conditions for $\CC=\A$ so that by Proposition \ref{ft} (since the inner sum is non-empty only if $V_3 \leq   x^{1/4-10\delta}$)
 \begin{align*}
 S_{5,2}(\A) & \leq \sum_{\substack{V_1,V_2,V_3 \\ V_3 \leq V_2 \\ (V_1V_2V_3^2)^{1-\epsilon} \leq X^2}}\sum_{\substack{Z < p_3 < V, \, Z< p_2 < p_1 < V \\ p_1p_2 < W  \\ p_j \in (V_j^{1-\epsilon},V_j], \, \, j \in \{1,2,3\}}} S(\A_{p_1p_2p_3}, Z) \\
 & = \lambda\sum_{\substack{V_1,V_2,V_3  \\ V_3 \leq V_2 \\ (V_1V_2V_3^2)^{1-\epsilon} \leq X^2}}\sum_{\substack{Z < p_3 < V, \, Z< p_2 < p_1 < V \\ p_1p_2 < W  \\ p_j \in (V_j^{1-\epsilon},V_j], \, \, j \in \{1,2,3\}}} S(\B_{p_1p_2p_3}, Z) + \mathcal{O}\left( \lambda S(\B,X)\log^{-\delta/2} x \right)  \\
 &=  \lambda S_{5,2}(\B)  + E(\B) + \mathcal{O}\left( \lambda S(\B,X)\log^{-\delta/2} x \right).
 \end{align*}
Here
\begin{align*}
E(\B) =  \lambda \sum_{\substack{V_1,V_2,V_3 \\ V_3 \leq V_2 \\ (V_1V_2V_3^2)^{1-\epsilon} \leq X^2}} \sum_{\substack{Z < p_3 < V, \, Z < p_2 < p_1 < V \\ p_1p_2 < W, \\ p_3 \geq p_2  \,\, \text{or} \, \, p_1p_2p_3^2 \geq X^2 \\ p_j \in (V_j^{1-\epsilon},V_j], \, \, j \in \{1,2,3\}}} S(\B_{p_1p_2p_3}, Z) \ll \delta \lambda S(\B,X),
\end{align*} 
since the conditions in the sum over $V_1,V_2,V_3$ imply that in the inner sum always either $p_3=p_2x^{o(1)}$ or $p_1p_2p_3^2 = X^{2+o(1)},$ so that by Lemma \ref{bi} the  Buchstab integral correspondig to the sum $E(\B)$ is of size $\ll \delta$  (thus, the deficiency of $E(\B)$ is $\ll \delta$). Hence, $ S_{5,2}(\A) \leq (1+\mathcal{O}(\delta)) \lambda S_{5,2}(\B).$
 
We can also obtain a lower bound by a similar argument; instead of dropping the cross-conditions $p_3 < p_2,$ $p_1p_2p_3^2 < X^2$ for $S_{5,2}(\A),$ we divide the sum over $V_j$ into two parts $\sum_{V_1,V_2,V_3} =\sum_{V_1,V_2,V_3}^{(1)} + \sum_{V_1,V_2,V_3}^{(2)},$ where the first sum runs over $V_j$ such that $V_3 < V_2^{1-\epsilon}$ and $V_1V_2V_3^2 < X^2,$ and the second sum over the complement. In the range of $\sum_{V_1,V_2,V_3}^{(1)}$ we always have
\begin{align*}
\sum_{\substack{Z < p_3 < p_2 < p_1 < V \\ p_1p_2 < W, \, \, p_1p_2p_3^2 < X^2 \\ p_j \in (V_j^{1-\epsilon},V_j], \, \, j \in \{1,2,3\}}} S(\CC_{p_1p_2p_3}, Z) = \sum_{\substack{Z < p_3 < V, \, Z< p_2 < p_1 < V \\ p_1p_2 < W  \\ p_j \in (V_j^{1-\epsilon},V_j], \, \, j \in \{1,2,3\}}} S(\CC_{p_1p_2p_3}, Z),
\end{align*}
so that we may apply Proposition \ref{ft}. In the second sum $\sum_{V_1,V_2,V_3}^{(2)}$ we estimate $S(\A_{p_1p_2p_3}, Z)$ trivially by $0$ from below, so that we obtain
\begin{align*}
 S_{5,2}(\A) \geq  \lambda S_{5,2}(\B) - E'(\B) + \mathcal{O}\left( \lambda S(\B,X)\log^{-\delta/2} x \right).
\end{align*}
 The error term 
\begin{align*}
 E'(\B) = \lambda \sideset{}{^{(2)}} \sum_{V_1,V_2,V_3} \sum_{\substack{Z < p_3 < p_2 < p_1 < V \\ p_1p_2 < W, \, \, p_1p_2p_3^2 < X^2 \\ p_j \in (V_j^{1-\epsilon},V_j], \, \, j \in \{1,2,3\}}} S(\B_{p_1p_2p_3}, Z)
\end{align*} 
 is again  a sum with $p_3=p_2x^{o(1)}$ or $p_1p_2p_3^2 = X^{2+o(1)},$ corresponding to a Buchstab integral of size $\ll \delta$, which yields $S_{5,2}(\A) \geq (1-\mathcal{O}(\delta)) \lambda S_{5,2}(\B).$  We will need this version later, when we have to remove cross-conditions in a sum with a positive overall sign.
 
By combining the above, we have  $ S_{5,2}(\A) =(1+\mathcal{O}(\delta)) \lambda S_{5,2}(\B).$
 
\subsubsection{Sum $S_{5,3}(\CC)$}
We split the sum into three parts depending on the size of $p_1p_2p_3p_4$
\begin{align*}
S_{5,3}(\CC) = \hspace{-10pt} \sum_{\substack{(p_1,p_2,p_3,p_4)\in \mathcal{U}(5,3,1)} } \hspace{-6pt}  S(\CC_{p_1p_2p_3p_4}, p_4) &+\hspace{-10pt} \sum_{\substack{(p_1,p_2,p_3,p_4)\in \mathcal{U}(5,3,2)} } \hspace{-6pt} S(\CC_{p_1p_2p_3p_4}, p_4) \\
 &\hspace{30pt}+\hspace{-10pt}  \sum_{\substack{(p_1,p_2,p_3,p_4)\in \mathcal{U}(5,3,3)} } \hspace{-6pt} S(\CC_{p_1p_2p_3p_4}, p_4)
\end{align*}
where
\begin{align*}
\mathcal{U}(5,3)&:= \{(p_1,p_2,p_3,p_4):  \, \, Z <p_4< p_3 < p_2 < p_1 < V,  \,\, p_1p_2 < W,  \\  &\hspace{230pt} p_1p_2p_3^2 < X^2,  \, \,  p_1p_2p_3p_4^2 < X^2 \} \\
\mathcal{U}(5,3,1)&:= \mathcal{U}(5,3) \cap \{(p_1,p_2,p_3,p_4): \, \, p_1p_2p_3p_4 < W\} \\
\mathcal{U}(5,3,2)&:= \mathcal{U}(5,3) \cap \{(p_1,p_2,p_3,p_4): \, \,W \leq p_1p_2p_3p_4 \leq x^{1/2-\delta} \}  \\
\mathcal{U}(5,3,3)&:= \mathcal{U}(5,3) \cap \{(p_1,p_2,p_3,p_4): \, \, p_1p_2p_3p_4 > x^{1/2 -\delta}\}.
\end{align*}

\textbf{Sum over $\mathcal{U}(5,3,2)$:} We have the variable $p_1p_2p_3p_4$ in the Type II range $[W,x^{1/2-\delta}]$, so that we may apply Proposition \ref{typeiiestimate}; we just need to remove the cross-condition $(n,P(p_4))=1$ for the implicit variable in $S(\CC_{p_1p_2p_3p_4}, p_4).$ To this end, write
 $n=q_1\cdots q_k$ so that
\begin{align*}
\sum_{(p_1,p_2,p_3,p_4) \in \mathcal{U}(5,3,2)} \hspace{-5pt} S(\CC_{p_1p_2p_3p_4}, p_4) = \sum_{k \ll 1}   \sum_{(p_1,p_2,p_3,p_4) \in \mathcal{U}(5,3,2)} \sum_{\substack{ p_4 < q_1 \leq  q_2 \leq \dots \leq  q_k }} \hspace{-5pt} W_\CC(p_1p_2p_3p_4q_1 \cdots q_k).
\end{align*}
Similarly as in the above for the sum $S_{5,2}(\CC),$ we divide sums over $p_4$ and $q_1$ into shorter sums, which yields
\begin{align*}
\sum_{k \ll 1} \sum_{\substack{V,U \\ V^{1-\epsilon} \leq U}} \sum_{\substack{(p_1,p_2,p_3,p_4) \in \mathcal{U}(5,3,2)\\
p_4 \in (V^{1-\epsilon},V]}}  \sum_{\substack{ p_4 < q_1 \leq  q_2 \leq \dots \leq  q_k  \\ q_1 \in (U^{1-\epsilon}, U]}} W_\CC(p_1p_2p_3p_4q_1 \cdots q_k).
\end{align*}
Applying the argument used with the sum $S_{5,2}(\CC)$ to handle the cross-conditions, combined with Proposition \ref{typeiiestimate}, we obtain
\begin{align*}
\sum_{(p_1,p_2,p_3,p_4) \in \mathcal{U}(5,3,2)} S(\A_{p_1p_2p_3p_4}, p_4) = \lambda \sum_{(p_1,p_2,p_3,p_4) \in \mathcal{U}(5,3,2)} S(\B_{p_1p_2p_3p_4}, p_4) + \mathcal{O}\left( \delta \lambda S(\B,X) \right).
\end{align*}

\textbf{Sum over $\mathcal{U}(5,3,1):$} Here we apply Buchstab's identity twice, which yields
\begin{align*}
\sum_{\substack{(p_1,p_2,p_3,p_4)\in \mathcal{U}(5,3,1)} } S(\A_{p_1p_2p_3p_4}, Z) - \sum_{\substack{(p_1,p_2,p_3,p_4)\in \mathcal{U}(5,3,1), \\ Z \leq p_5 < p_4, \, p_1p_2p_3p_4p_5^2 < X^2} } S(\A_{p_1p_2p_3p_4p_5}, Z)  \\
+ \sum_{\substack{(p_1,p_2,p_3,p_4)\in \mathcal{U}(5,3,1), \\ Z \leq p_6 < p_5 < p_4, \, p_1p_2p_3p_4p_5^2 < X^2, \\p_1p_2p_3p_4p_5p_6^2 < X^2} } S(\A_{p_1p_2p_3p_4p_5p_6}, p_6).
\end{align*}
The first two sums have asymptotic formulas by  Proposition \ref{ft}, since $p_1p_2p_3p_4 < W$ and $p_5 < (p_1p_2p_3p_4)^{1/4} < x^{1/4-20\delta}$ (the cross-conditions can be handled by the discussion of the sum $S_{5,2}(\CC)$ in the above). In the third sum we take out the range where at least one of the products $\prod_{j \in I}p_j$ (where $I \subseteq \{1,2,\dots,6\}$) is in the Type II range  $[W,x^{1/2-\delta}]$ (these can be dealt with by a similar argument as for the sum over $\mathcal{U}(5,3,2)$). We must discard the rest of the sum, giving us a deficiency (cf. Lemma \ref{bi})
\begin{align*}
\mathcal{O} (\delta) +(1-\gamma)\int f_{5,3,1} (\boldsymbol{\alpha})\omega(\boldsymbol{\alpha})\frac{d\alpha_1d\alpha_2d\alpha_3d\alpha_4d\alpha_5d\alpha_6}{\alpha_1\alpha_2\alpha_3\alpha_4\alpha_5\alpha_6^2} < 0.006493,
\end{align*}
where $\omega(\boldsymbol{\alpha})= \omega(\alpha_1,\dots,\alpha_6):= \omega((1-\gamma-\alpha_1-\cdots -\alpha_k)/\alpha_6),$ and $f_{5,3,1} (\boldsymbol{\alpha})$ is the characteristic function of the six-dimensional set (the various $\delta$'s can be dropped, with an error $\ll \delta$)
\begin{align*}
\{\boldsymbol{\alpha}: & \, (x^{\alpha_1}, x^{\alpha_2},x^{\alpha_3},x^{\alpha_4}) \in \mathcal{U}(5,3,1), \, \gamma \leq \alpha_6 < \alpha_5 < \alpha_4, \,  \alpha_1 + \cdots +\alpha_4 +2 \alpha_5 < 1-\gamma ,  \\& \alpha_1 + \cdots + \alpha_5 +2 \alpha_6 < 1-\gamma, \, \sum_{j \in I} \alpha_j \notin [1/2-\gamma, 1/2]  \quad \text{for every} \, \, I \subseteq \{1,2,\dots,6\} \}.
\end{align*}
For all of the codes for computing upper bounds for the numerical integrals, see the codepad links at the end of this section.

\textbf{Sum over $\mathcal{U}(5,3,3):$}  We divide the range $\mathcal{U}(5,3,3)$ into three parts  $\mathcal{U}(5,3,3,1) \cup \mathcal{U}(5,3,3,2) \cup \mathcal{U}(5,3,3,3),$  where $\mathcal{U}(5,3,3,1) := \mathcal{U}(5,3,3) \cap \{p_2p_3p_4 < W\},$ and $\mathcal{U}(5,3,3,3):=\mathcal{U}(5,3,3) \cap \{p_2p_3p_4 > x^{1/2-\delta}\},$ and $\mathcal{U}(5,3,3,2)$ is the remaining part which can be handled as a Type II sum, since $p_2p_3p_4 \in [W,x^{1/2-\delta}]$ (the cross-condition $(n,P(p_4))=1$ is again dealt with by a similar argument as with the sum over $\mathcal{U}(5,3,2)$).

For $\mathcal{U}(5,3,3,1)$ we use Buchstab's identity in the `upwards' direction (this is called Process II in Jia and Liu \cite{JL}, page 27)
\begin{align*}
 S(\mathcal{A}_{p_1p_2p_3p_4}, p_4) =  S\left(\mathcal{A}_{p_1p_2p_3p_4}, \frac{X}{(p_1p_2p_3p_4)^{1/2} } \right) + \sum_{p_4 \leq p_5 < \frac{X}{(p_1p_2p_3p_4)^{1/2} }  }  S(\mathcal{A}_{p_1p_2p_3p_4p_5}, p_5).
\end{align*}
By (\ref{hash}) the implicit variable in the first sum is of size $x^{1-\gamma +o(1)}/(p_1p_2p_3p_4).$ Thus, we have a four dimensional sum over primes and a five dimensional sum over $p_5$-almost-primes. In each sum  we take out ranges with a Type II variable, and discard the rest. This gives us deficiencies
\begin{align*}
 \mathcal{O}(\delta) +(1-\gamma)\int f_{5,3,3,1}(\boldsymbol{\alpha})\frac{d\alpha_1d\alpha_2d\alpha_3d\alpha_4}{(1-\gamma-\alpha_1-\alpha_2-\alpha_3-\alpha_4)\alpha_1\alpha_2\alpha_3\alpha_4}<0.1139225,
\end{align*}
and
\begin{align*}
\mathcal{O}(\delta)+ (1-\gamma)\int g_{5,3,3,1}(\boldsymbol{\alpha}) \omega(\boldsymbol{\alpha}) \frac{d\alpha_1d\alpha_2d\alpha_3d\alpha_4d\alpha_5}{\alpha_1\alpha_2\alpha_3\alpha_4\alpha_5^2}<0.0450231.
\end{align*}
Here $f_{5,3,3,1}$ is the characteristic function of the four-dimensional set
\begin{align*}
\mathcal{V}(5,3,3,1)=\{\boldsymbol{\alpha}: & \, (x^{\alpha_1}, x^{\alpha_2},x^{\alpha_3},x^{\alpha_4}) \in \mathcal{U}(5,3,3,1), \\ &\hspace{60pt} \sum_{j \in I} \alpha_j \notin [1/2-\gamma, 1/2]  \quad \text{for every} \, \, I \subseteq \{1,2,3,4\} \},
\end{align*}
and $g_{5,3,3,1}$ is the characteristic function of the five-dimensional set
\begin{align*}
\{\boldsymbol{\alpha}: & \, (x^{\alpha_1}, x^{\alpha_2},x^{\alpha_3},x^{\alpha_4}) \in \mathcal{V}(5,3,3,1), \\ &\hspace{60pt} \sum_{j \in I} \alpha_j \notin [1/2-\gamma, 1/2]  \quad \text{for every} \, \, I \subseteq \{1,2,3,4,5\} \}.
\end{align*}

We discard the sum over $\mathcal{U}(5,3,3,3)$ (no combination of the variables is in the Type II range),  which gives a deficiency
\begin{align*}
(1-\gamma)\int f_{\mathcal{V}_2} (\boldsymbol{\alpha})\omega(\boldsymbol{\alpha})\frac{d\alpha_1d\alpha_2d\alpha_3d\alpha_4}{\alpha_1\alpha_2\alpha_3\alpha_4^2}< 0.014837,
\end{align*}
where $f_{\mathcal{V}_2}$ is the characteristic function of $\{\boldsymbol{\alpha}: \, (x^{\alpha_1}, x^{\alpha_2},x^{\alpha_3},x^{\alpha_4}) \in \mathcal{U}(5,3,3,3) \}.$

\subsubsection{Deficiency of $S_5(\CC)$} Combining the above, the deficiency of $S_5(\CC)$ is $< 0.1802756.$

\subsection{Sum $S_6(\CC)$}
This is almost already a Type II sum, we just need to deal with the cross-condition $(n,P(p_2))=1.$ Applying the argument used with the sum over $\mathcal{U}(5,3,2)$, we obtain 
\begin{align*}
S_6(\A) = \lambda S_6(\B) +  \mathcal{O}\left( \delta \lambda S(\B,X) \right).
\end{align*}

\subsection{Sum $S_7(\CC)$} We first divide $S_7(\CC)$ into two parts (the exponent 0.36 is optimized by computer for using role reversal in the first sum)

\begin{align*}
S_7(\CC)&= \sum_{\substack{x^{1/4-\delta/2} \leq p_1 < W, \\ Z \leq p_2 < x^{1/4-10\delta} \\  p_1p_2 > x^{1/2-\delta}, \, \, \sqrt{p_1}p_2 < x^{0.36} }}  S(\CC_{p_1p_2}, p_2)  + \sum_{\substack{x^{1/4-\delta/2} \leq p_1 < W, \\ Z \leq p_2 < x^{1/4-10\delta} \\  p_1p_2 > x^{1/2-\delta}, \, \, \sqrt{p_1}p_2 \geq x^{0.36} }}  S(\CC_{p_1p_2}, p_2) \\[10pt]
&=: S_{7,1}(\CC) + S_{7,2}(\CC)
\end{align*}

\subsubsection{Sum $S_{7,1}(\CC)$}  We apply Buchstab's identity twice to obtain
\begin{align*}
S_{7,1}(\CC)=\sum_{\substack{x^{1/4-\delta/2} \leq p_1 < W, \\ Z \leq p_2 < x^{1/4-10\delta} \\  p_1p_2 > x^{1/2-\delta},\, \, \sqrt{p_1}p_2 < x^{0.36} }}  S(\CC_{p_1p_2}, Z) - \sum_{\substack{x^{1/4-\delta/2} \leq p_1 < W, \\ Z \leq p_3< p_2 < x^{1/4-10\delta} \\  p_1p_2 > x^{1/2-\delta}, \, \, \sqrt{p_1}p_2 < x^{0.36}, \, \,  p_1p_2p_3^2 < X^2 }} \hspace{-20pt} S(\CC_{p_1p_2p_3}, Z) \\
+  \sum_{\substack{x^{1/4-\delta/2} \leq p_1 < W, \\ Z \leq p_4< p_3< p_2 < x^{1/4-10\delta} \\  p_1p_2 > x^{1/2-\delta}, \, \, \sqrt{p_1}p_2 < x^{0.36}, \\  p_1p_2p_3^2 < X^2, \, \,  p_1p_2p_3p_4^2 < X^2 }} \hspace{-20pt}  S(\CC_{p_1p_2p_3p_4}, p_4)=: S_{7,1,1}(\CC) - S_{7,1,2}(\CC)+S_{7,1,3}(\CC).
\end{align*}

\textbf{Sum $S_{7,1,1}(\CC)$:} By Proposition \ref{ft} with $u=p_2$, $v=p_1$ (the cross-conditions between $p_1 $ and $p_2$ can be removed by the same argument as with the sum $S_{5,2}(\CC)$)
\begin{align*}
S_{7,1,1}(\A) = \lambda S_{7,1,1}(\B)  +  \mathcal{O}\left( \delta \lambda S(\B,X) \right).
\end{align*}

\textbf{Sum $S_{7,1,2}(\CC)$:} The parts with $p_1p_3 \leq x^{1/2-\delta/2}$ or $p_2p_3 \leq x^{1/4-20\delta}$ have an asymptotic formula by Proposition \ref{ft} (again using the discussion of  $S_{5,2}(\CC)$ to remove cross-conditions). Write 
\begin{align*}
\mathcal{U}(7,1,2):= \{(p_1,p_2,p_3): x^{1/4-\delta/2} \leq p_1 < W, Z \leq p_3< p_2 < x^{1/4-10\delta}, \, \, p_1p_3 > x^{1/2-\delta/2}, \, \, \\ \sqrt{p_1}p_2 < x^{0.36}, \, \,  p_1p_2p_3^2 < X^2   , \, p_2p_3 > x^{1/4-20\delta}\}
\end{align*}
 for the complementing region. Here we apply the role reversal device; we write out the implicit sum and apply Buchstab's identity to the sum over $p_1,$ that is,
\begin{align*}
\sum_{(p_1,p_2,p_3) \in \mathcal{U}(7,1,2)} S(\A_{p_1p_2p_3},Z)&= \sum_{\substack{p_1,p_2,p_3, n \\(p_1,p_2,p_3) \in \mathcal{U}(7,1,2) \\
(n,P(Z))=1}} W_{\CC}(p_1p_2p_3n) \\
&= \hspace{-20pt} \sum_{\substack{m,p_2,p_3, n  \\(m,p_2,p_3) \in \mathcal{U}(7,1,2) \\
(n,P(Z))=1, \, \, (m,P(Z))=1}}\hspace{-20pt} W_{\CC}(mp_2p_3n) - \hspace{-20pt}\sum_{\substack{q,m,p_2,p_3,n  \\(qm,p_2,p_3) \in \mathcal{U}(7,1,2), \, \, Z \leq q < m \\
(n,P(Z))=1, \, \, (m,P(q))=1}}\hspace{-20pt}  W_{\CC}(qmp_2p_3n) \\
&=: S_{7,1,2,1}(\CC) - S_{7,1,2,2}(\CC),
\end{align*}
say.

\textbf{Sum $S_{7,1,2,1}(\CC)$:} Note that  $mp_3 > x^{1/2-\delta/2}$ implies by (\ref{hash}) that $p_2n < x^{1/2-\gamma+\delta},$ and we have $p_3<p_2<x^{1/4-20\delta}.$ Thus, we will apply Proposition \ref{ft} with $u=p_3,$ $v=p_2n,$ and $m$ as the implicit variable. To justify this properly, we need to remove the cross-conditions between $m$ and the other variables in such a way, that we use Proposition \ref{ft} only to sums where $m$ is not restricted. Similarly as with $S_{5,2},$ we write
\begin{align*}
\sum_{\substack{m,p_2,p_3, n  \\(m,p_2,p_3) \in \mathcal{U}(7,1,2) \\
(n,P(Z))=1, \, \, (m,P(Z))=1}} W_{\CC}(mp_2p_3n)  = \sum_{U,V_2,V_3} \sum_{\substack{m,p_2,p_3, n  \\(m,p_2,p_3) \in \mathcal{U}(7,1,2) \\
(n,P(Z))=1, \, \, (m,P(Z))=1\\
n \in (U^{1-\epsilon},U] \\
p_j \in (V_j^{1-\epsilon},V_j], \, \, j \in \{2,3\} } } W_{\CC}(mp_2p_3n),
\end{align*}
where the sum is over $U,V_2,V_3$ of the form $ x^{(1-\epsilon)^g}, $ $g\in \N,$ such that (note that $m=x^{1-\gamma+o(1)}/(UV_2V_3)$ by (\ref{hash}))
\begin{align*}
x^{1/4-\delta/2}  \leq  \frac{x^{1-\gamma+\delta}}{UV_2V_3} \leq W x^{2\delta},\quad Z \leq V_3 \leq V_2,  \quad V_2^{1-\epsilon} \leq x^{1/4 -10\delta}, \quad
\frac{x^{1-\gamma+\delta}}{UV_2V_3}V_3 \geq x^{1/2-\delta/2}, \\ \sqrt{\frac{x^{1-\gamma-\delta}}{UV_2V_3}}V_2^{1-\epsilon} < x^{0.36}, \quad \frac{x^{1-\gamma-\delta}}{UV_2V_3}(V_2V_3^2)^{1-\epsilon} < X^2, \quad  V_2V_3 \geq x^{1/4-20\delta}
\end{align*}
(that is, each condition in the definition of $\mathcal{U}(7,1,2)$ is loosened appropriately but at most by a factor of $x^{\mathcal{O}(\delta)}$). Since $S_{7,1,2}(\CC)$ has overall a negative sign, we only require an upper bound. Thus, we remove the cross-condition for $\CC=\A$ so that by Proposition \ref{ft}
\begin{align*}
S_{7,1,2,1}(\A) \, &\leq \, \sum_{U,V_2,V_3} \sum_{\substack{m,p_2,p_3, n \\
(n,P(Z))=1, \, \, (m,P(Z))=1\\
n \in (U^{1-\epsilon},U]  \\
p_j \in (V_j^{1-\epsilon},V_j], \, \, j \in \{2,3\} } } W_{\A}(mp_2p_3n) \\
&= \sum_{U,V_2,V_3} \sum_{\substack{p_2,p_3, n \\
(n,P(Z))=1\\
n \in (U^{1-\epsilon},U]  \\
p_j \in (V_j^{1-\epsilon},V_j], \, \, j \in \{2,3\} } } S(\A_{np_2p_3},Z) \\
&= \lambda S_{7,1,2,1}(\B) + E(\B) + \mathcal{O}\left( \lambda S(\B,X)\log^{-\delta/2} x \right),
\end{align*}
where the error term $E(\B)$ is again a sum where some combination of the variables is fixed up to a factor $x^{\mathcal{O}(\delta)},$ so that the sum has a deficiency $\ll \delta.$ Therefore, 
\begin{align*}
S_{7,1,2,1}(\A) \leq \lambda S_{7,1,2,1}(\B) +  \mathcal{O}\left( \delta \lambda S(\B,X) \right).
\end{align*}

\textbf{Sum $S_{7,1,2,2}(\CC)$:} Write
\begin{align*}
\sum_{\substack{q,m,p_2,p_3,n  \\(qm,p_2,p_3) \in \mathcal{U}(7,1,2), \, \, Z \leq q < m \\
(n,P(Z))=1, \, \, (m,P(q))=1}}  W_{\CC}(qmp_2p_3n) = \sum_{\substack{q,m,p_2,p_3  \\(qm,p_2,p_3) \in \mathcal{U}(7,1,2), \, \, Z \leq q < m \\
 (m,P(q))=1}} S(\CC_{qmp_2p_3},Z).
\end{align*}
 We first take out the part which has an asymptotic formula by Proposition \ref{ft} applied with $n$ as the implicit variable (cross-conditions again handled by the discussion of $S_{5,2}$); we are left with the range $\{(q,m,p_2,p_3): (qm,p_2,p_3) \in \mathcal{U}(7,1,2)\} \setminus \mathcal{V}  ,$ where (note that always $q < x^{1/4-20\delta}$)
\begin{align*}
\mathcal{V} = &\{mp_2p_3 < x^{1/2-\gamma}\} \cup \{m<x^{1/4-20\delta}, \, qp_2p_3 < x^{1/2-\gamma} \} \\ &\cup \{mp_1 <x^{1/4-20\delta}, \, qp_2 < x^{1/2-\gamma}\}\cup \{mp_2 <x^{1/4-20\delta}, \, qp_1 < x^{1/2-\gamma} \} \\
&\cup \{ qp_1 <x^{1/4-20\delta}, \,  mp_2 < x^{1/2-\gamma} \} \cup \{ qp_2 <x^{1/4-20\delta}, \,  mp_1 < x^{1/2-\gamma} \} .
\end{align*}
We also take out the parts where we have a Type II variable; thus, we are left with
\begin{align*}
\mathcal{W}(7,1,2)= \{(q,m,p_2,p_3): \, \, (qm,p_2,p_3) \in \mathcal{U}(7,1,2), &\, \, \,  Z \leq q  < m, \\ & \hspace{-60pt} qp_2p_3, mp_2p_3, \, qp_j, mp_j \notin [W,x^{1/2-\delta}] \} \setminus \mathcal{V} 
\end{align*}
 This remaining sum has the right sign so that it can be dropped, with a deficiency (cf. Lemma \ref{roleb}, $q=x^{\alpha_0},  m=x^{\alpha_1}, p_2=x^{\alpha_2}, p_3=x^{\alpha_3} $)
\begin{align*}
\mathcal{O}(\delta)+ \frac{1-\gamma}{\gamma} \int f_{7,1,2}(\boldsymbol{\alpha})\omega_2 (\boldsymbol{\alpha }) \frac{d\alpha_0 d\alpha_1d\alpha_2d\alpha_3 }{\alpha_0^2\alpha_2 \alpha_3} <  0.054317,
\end{align*}
where $\omega_2 (\boldsymbol{\alpha })=\omega((1-\gamma-\alpha_0-\alpha_1-\alpha_2-\alpha_3)/\gamma)\omega(\alpha_1/\alpha_0),$ and $f_{7,1,2}$ is the characteristic function of $\{(x^{\alpha_0}, x^{\alpha_1},x^{\alpha_2},x^{\alpha_3}) \in \mathcal{W}(7,1,2) \}$.

\textbf{Sum $S_{7,1,3}(\CC)$:} We take out the range with Type II variables and discard the rest to find a deficiency 
\begin{align*}
\mathcal{O}(\delta)+ (1-\gamma)\int f_{7,1,3}(\boldsymbol{\alpha})\omega(\boldsymbol{\alpha})\frac{d\alpha_1d\alpha_2d\alpha_3d\alpha_4}{\alpha_1\alpha_2\alpha_3\alpha_4^2} <  0.113006,
\end{align*}
where $f_{7,1,3}$ is the characteristic function of the four-dimensional set
\begin{align*}
\{\boldsymbol{\alpha}:& \, 1/4 \leq \alpha_1 < 1/2-\gamma, \,\, \gamma  \leq \alpha_4< \alpha_3< \alpha_2 < 1/4, \\ &\alpha_1 + \alpha_2 > 1/2, \,\, \alpha_1/2+ \alpha_2 < 0.36, \,\,  \alpha_1+\alpha_2+2\alpha_3 < 1-\gamma, \\ & \alpha_1+\alpha_2+\alpha_3 +2\alpha_4< 1-\gamma, \, \,\sum_{j \in I} \alpha_j \notin [1/2-\gamma, 1/2]  \quad \text{for every} \, \, I \subseteq \{1,2,3,4\}  \}.
\end{align*}

\subsubsection{Sum $S_{7,2}(\CC)$} We discard the sum $S_{7,2}(\CC),$ which gives a deficiency
\begin{align*}
\mathcal{O}(\delta)+ (1-\gamma)\int f_{7,2}(\boldsymbol{\alpha}) \omega(\boldsymbol{\alpha}) \frac{d\alpha_1 d\alpha_2}{\alpha_1\alpha_2^2} < 0.4425785
\end{align*}
where $f_{7,2}$ is the characteristic function of
\begin{align*}
\{\boldsymbol{\alpha}: \, 1/4  \leq \alpha_1 < 1/2-\gamma, \,\, \gamma  \leq \alpha_2 < 1/4, \,
\,   \alpha_1 +\alpha_2 > 1/2, \, \, \alpha_1/2+ \alpha_2 \geq 0.36 \}.
\end{align*}
\subsubsection{Deficiency of $S_7(\CC)$} The total deficiency of $S_7(\CC)$ is  < 0.6099015.

\subsection{Sum $S_8(\CC)$}
 This corresponds to the part where some ranges can be handled by the Type I$_2$ information in Chapter 5 of Harman's book \cite{Har}.  In our case, we have not obtained the Type I$_2$ information (cf. discussion after Proposition \ref{typeiestimate2}), so that we have to discard all of the sum. 
 The sum $S(\B_{p_1p_2},p_2)$ counts primes of size $x^{1-\gamma + o(1)}/(p_1p_2),$ since $p_2 > x^{1/4+o(1)}.$ Thus, the deficiency is 
\begin{align*}
\mathcal{O}(\delta)+ (1-\gamma)\int_{1/4}^{1/2-\gamma} \frac{d\alpha_1}{\alpha_1} \int_{1/4}^{\min\{ \alpha_1,(1-\gamma-\alpha_1)/2 \}} \frac{d\alpha_2}{(1-\gamma-\alpha_1-\alpha_2)\alpha_2} < 0.2021922
\end{align*}
\begin{remark} Since $p_2$ is large here, the deficiency from this range grows very slowly as gamma decreases.
\end{remark}
\subsection{Conclusion of the proof} Combining the above estimates we obtain
\begin{align*}
S(\A,X )  &=  S_1(\A) -S_2(\A) -S_3(\A) + S_5(\A) +S_6(\A) + S_7(\A) +S_8(\A) \\ 
&\geq  \lambda S_1(\B) -\lambda S_2(\B) -\lambda S_3(\B) + \lambda (S_5(\B) -0.1802756 \cdot S(\B,X))  \\
& \hspace{20pt} + \lambda S_6(\B) + \lambda(S_7(\B)-0.6099015 \cdot S(\B,X) ) \\
& \hspace{40pt}+\lambda (S_8(\B)-0.2021922\cdot S(\B,X)) -\mathcal{O}(\delta)\lambda S(\B,X) \\ 
&=(1-0.1802756-0.6099015-0.2021922-\mathcal{O}(\delta))\lambda S(\B,X)\\
& > 0.007 \cdot \lambda  S(\B,X)
\end{align*}
which completes the proof of Proposition \ref{mainp} with $\mathfrak{C}(1/19) = 0.007.$ For $\gamma > 1/19$ all of the deficiencies are strictly smaller, so that $\mathfrak{C}(\gamma) > 0.007$ for $\gamma > 1/19.$
\qed
\begin{remark} For $\gamma \geq 1/4$ the sum $S_4(\CC)$ is essentially empty, so that we actually get an asymptotic formula for $\gamma \geq 1/4$.
\end{remark}

The Python codes for computing the Buchstab integrals are available at (in the order of appearance)

\begin{tabular}{ c c }
$\mathcal{U}(5,3,1)$ & \url{http://codepad.org/rxR2O7Is} \\
$\mathcal{V}(5,3,3,1)$, four dimensional prime part & \url{http://codepad.org/fQKYi7hg} \\
$\mathcal{V}(5,3,3,1)$, five dimensional almost-prime part & \url{http://codepad.org/1SaVNuBy} \\
$\mathcal{U}(5,3,3,3)$ & \url{http://codepad.org/ZiNV3AuH} \\
$\mathcal{W}(7,1,2)$, with role reversal & \url{http://codepad.org/fVJHM3az} \\
Sum $S_{7,1,3}(\CC)$ & \url{http://codepad.org/G6Kx7IMg} \\
Sum $S_{7,2}(\CC)$ & \url{http://codepad.org/4RTwoAPk} \\
Sum $S_{8}(\CC)$ & \url{http://codepad.org/L4n8cLtY}
\end{tabular}

\bibliography{largefactor}
\bibliographystyle{abbrv}

\end{document}